\documentclass{amsart}
\usepackage{amssymb, amsmath, amsthm}
\usepackage[all]{xy}
\usepackage{cite}
\usepackage{url}

\title{Categories of assemblies for realizability}
\author[W. P. Stekelenburg]{Wouter Pieter Stekelenburg}
\address{Faculty of Mathematics, Informatics, and Mechanics\\
University of Warsaw\\
Banacha 2\\
02-097 Warszawa\\
Poland}
\email{w.p.stekelenburg@gmail.com}

\theoremstyle{plain}
\newtheorem{theorem}{Theorem}
\newtheorem{lemma}[theorem]{Lemma}

\theoremstyle{definition}
\newtheorem{defin}[theorem]{Definition}
\newtheorem{axiom}[theorem]{Axiom}
\newtheorem*{example}{Example}

\newtheorem*{remark}{Remark}

\newcommand\hide[1]{}
\newcommand\cat\mathcal
\newcommand\set[1]{\left\{#1\right\}}
\newcommand\converges{\mathord\downarrow}







\begin{document}
\begin{abstract} This paper introduces \emph{categories of assemblies} which are closely connected to realizability interpretations and which are based on an important subcategory of the effective topos. There is a list of properties which characterize these categories of assemblies up to equivalence. \end{abstract}
\maketitle

\section{Introduction}
The many techniques aggregated in \emph{realizability} demonstrate the consistency of certain classically false statements with non-classical logics, in particular with intuitionistic logic. This paper charts the capabilities of realizability by categorical means. 

This paper is about a class of categories which generalize the \emph{category of assemblies} (see below) which is equivalent to the category of $\neg\neg$-separated objects of the \emph{effective topos}. Each generalized category of assemblies is a Heyting category (theorem \ref{Heyting}). Each category of assemblies implicitly defines a realizability interpretation of its internal language, such that valid and realized propositions coincide (theorem \ref{abracadabra}).
The latter half of this paper outlines some structure and properties which determine when an arbitrary category is equivalent to a category of assemblies (theorem \ref{charac}).

The motivating example of this paper is the category of assemblies. Informally an assembly $a$ is a non-classical subset of an arbitrary set $s$. For each $x\in s$, a set of natural numbers witnesses the membership of $x$ to $a$. These numbers are the \emph{realizers} of $x\in a$. For each subassembly $a'$ of each set $s'$ and for each function $f:s\to s'$, \emph{partial recursive functions} determine whether the restriction of $f$ to $a$ factors through $a'$. Formally, this is defined as follows.

\newcommand\pow{\mathbf P}
\newcommand\N{\mathbb N}
\newcommand\partar\rightharpoonup
\newcommand\dom{\mathrm{dom}}
\newcommand\cod{\mathrm{cod}}
\newcommand\id{\mathrm{id}}
\newcommand\Asm{\mathsf{Asm}}
\begin{defin} An \emph{assembly} is a pair $(X,\alpha)$ where $X$ is a set and $\alpha$ is a function $X\to \pow \N$ valued in nonempty subsets of $\N$. 

Let $(X,\alpha)$ and $(Y,\beta)$ be two assemblies. A \emph{total morphism} $(X,\alpha) \to (Y,\beta)$ is a function $f:X\to Y$ which has a partial recursive $g:\N\partar\N$ such that for each $x\in X$ and $n\in \alpha(x)$, $n$ is in the domain $\dom g$ of $g$ and $g(n)\in \beta(f(x))$.

Assemblies and total morphisms together form the category $\Asm$ of assemblies.\label{egg}
\end{defin}

The category of assemblies is closely related to the \emph{effective topos} of Hyland \cite{MR2479466, MR717245, MR1056382}. On one hand $\Asm$ is equivalent to the subcategory of $\neg\neg$-separated objects of the effective topos and on the other hand the effective topos is the \emph{ex/reg completion} of $\Asm$, i.e.\ the result of freely adding quotients to internal equivalence relations in a way that respects regular epimorphisms \cite{MR1600009, MR1056382, MR1358759, Menni00exactcompletions, MR1900904}. I choose to discuss the category of assemblies rather than its ex/reg completion, because its properties are more stable.

This paper generalizes the category of assemblies along two lines. Firstly, different sets of realizers and computable functions replace the natural numbers and partial recursive functions. Secondly, the object and morphisms of other categories replace sets and functions. The following conditions confine the generalizations this paper presents.
\begin{itemize}
\item The base category is a \emph{Heyting category} (see definition \ref{heytcat}).
\item All notions of computability are Turing complete.
\item There are no \emph{order partial combinatory algebras} \cite{MR1981211, MR2479466} other than partial combinatory algebras in this paper.
\item There is no additional structure on the object of realizers in this paper, as found in modified and extensional realizability \cite{MR1640330, MR0106838, MR1443487, MR2479466}.
\end{itemize}

Weaker than Turing complete notions of computability can still give interesting realizability categories as the following papers show: \cite{MR2265872, Longley_matchingtyped, afsort}. The approaches in these papers are limited by what can either be seen as an extra constraint on the base category (a projective terminal object) or a different constraint on the notion of computability (inhabited families of computable function have at least one computable global section). This paper shows how to overcome these limitations, but only for the special case Turing complete computability. The other constraints are also aimed at keeping things simple, although I have developed a lot of theory for order partial combinatory algebras elsewhere \cite{MSC:8896618, RealCats}.

This paper omits theory on functors between categories of assemblies and to other categories. Much of this can be found in my other work \cite{MSC:8896618, RealCats}. There are straightforward generalizations of Longley's \emph{applicative morphisms} \cite{RTnLS} which are a useful tool for studying regular \ref{regular} functors between realizability toposes. Hofstra and Frey generalize these in \cite{afsort, MR2265872} for other realizabilities. The reason I do not get deeper into these right now, is that there is a notion of applicative morphism related to finite limit preserving functors between realizability toposes (the \emph{left exact morphisms} in \cite{RealCats}) which I am exploring for another paper.

The characteristic properties of realizability categories are formulated as a list of axioms in this paper (axioms \ref{separ}, \ref{weakgen} and \ref{tracking}). They may be translated in the internal language of the categories with some extra effort, to give an axiomatization of realizability like Dragalin's \cite{MR0258596} and Troelstra's \cite{MR0335240}. I have outlined the possibilities in my thesis \cite{RealCats}, but this extra effort will have to wait for another paper too.

I have avoided realizability triposes and similar structures throughout this paper. This should make the content accessible to readers who are unfamiliar with categorical realizability. However, familiarity with categorical logic is \emph{not} a luxury.

\section{Ingredients}\label{Ingredients}
The generalized categories of assemblies look like this:
the objects and morphisms of an arbitrary \emph{Heyting category} replace set and functions;
some object $A$ replaces $\N$;
partial morphisms $A\partar A$ which are \emph{$\phi$-computable} relative to a partial \emph{application} operator $\cdot:A\times A\partar A$ and a \emph{combinatory complete filter} $\phi$, replace the partial recursive functions.
This section defines what the emphasized words in the previous sentence mean.

\subsection{Heyting categories}
Heyting categories correspond to theories in many sor\-ted first order intuitionistic logic. Theories induce Heyting categories of definable functions and Heyting categories have an internal language which is a many-sorted first order intuitionistic logic. 

\newcommand\sub{\mathsf{Sub}}
\newcommand\pre[1]{#1^{-1}}
\newcommand\im[1]{\exists_{#1}}
\begin{defin}[Heyting category]\label{heytcat} Let $\cat C$ be any category. For each object $X$ of $\cat C$, a \emph{subobject} is an isomorphism class of monomorphisms into $X$. Factorization induces a partial order $\subseteq$ on subobjects. The poset of all subobject is $\sub(X)$. If $\cat C$ has all pullbacks, then for each morphism $f: X\to Y$ pulling back monics induces a function $f^{-1}:\sub(Y)\to\sub(X)$ called the \emph{inverse image map}.

A \emph{Heyting category} is a category where the poset of subobjects $\sub(X)$ is a Heyting algebra for each object $X$ and where the inverse image map of each arrow is a homomorphism of Heyting algebras which has both adjoints. This means that
\begin{itemize}
\item for each object $X$, $\sub(X)$ has finitary joins ($\emptyset, \cup$), finitary meets ($X, \cap$) that distribute over joins and a map $d: \sub(X)\times \sub(X)\to \sub(X)$ that satisfies $x\cap y\subseteq z$ if and only if $x\subseteq d(y,z)$; 
\item for each morphism $f:X\to Y$, $f^{-1}$ preserves $d$ and there are $\forall_f,\exists_f:\sub(X)\to\sub(Y)$ such that $x\subseteq \forall_f(y)$ if and only if $f^{-1}(x)\subseteq y$ and $\exists_f(x)\subseteq y$ if and only if $x\subseteq f^{-1}(y)$; 
\item the adjoints satisfy the \emph{Beck-Chevalley} condition: if $f\circ g'=g\circ f'$ is a pullback square, then $\im{g'}\circ \pre{(f')} = \pre{f}\circ \im{g}$ and $\forall_{g'}\circ \pre{(f')} = \pre{f}\circ \forall_{g}$.
\[ \xymatrix{
\bullet \ar[r]^{f'}\ar[d]_{g'}\ar@{}[dr]|<\lrcorner & \bullet \ar[d]^g\ar@{}[drr]|\Longrightarrow && \bullet \ar[d]_{\im{g'}} & \bullet \ar[l]_{\pre{(f')}}\ar[d]^{\im g}\\
\bullet \ar[r]_f & \bullet && \bullet & \bullet \ar[l]^{\pre f}
}\]
\end{itemize}
\end{defin}

\begin{example} Every topos, including the category of sets and the effective topos is a Heyting category. In fact all regular (see definition \ref{regular}) extensive locally Cartesian closed categories, like the category of assemblies from the introduction, are Heyting. \end{example}

Heyting algebras admit an interpretation of a propositional language for which intuitionistic propositional logic is sound. The rest of the structure handles the extension to first order logic. The inverse image map takes care of substitutions. The adjoints take care of quantification and equality. The Beck-Chevalley condition says that quantification commutes with substitution.

\newcommand\of{\mathord:}
\newcommand\subob[1]{ {\left\langle #1 \right\rangle} }
\begin{defin} First order logic defines subobjects of objects in Heyting categories. I regularly exploit this in the definitions below. The following notation sets the formulas and subobjects of the internal language apart form the formulas and sets of the metalanguage:
\begin{itemize}
\item For $u\in \sub(X)$, is the related predicate is denoted $x\of u$.
\item For each formula $\chi$ with no other free variables than $x$ of type $X$, $\subob{x\of X| \chi(x)}$ defines the related subobject of $X$.
\end{itemize}
\end{defin}

\begin{defin}[Representative monomorphisms] \label{promonos} There is little harm in only considering small categories, because by G\"odels completeness theorem there are small models of set theory. This makes $\sub(X)$ a large set of large sets instead of a proper class of proper classes. The advantage is that for each small category with finite limits, there is a function that chooses for each object $X$ and each subobject $U\in \sub(X)$ a monomorphism $\mu_U:X_U \to X$ by the axiom of choice. Such representative monomorphisms figure in proofs throughout this paper.
\end{defin}

\subsection{Partial combinatory algebras}
The starting point for the model of computability is the universal computer: this is a device that takes a string of code and an string of input, performs a computation and  produces a string of output provided that the computation terminates. The following structure captures this idea inside a Heyting category.

\begin{defin}[Partial applicative structure] Let $\cat B$ be a Heyting category. A \emph{partial applicative structure} is a pair $(A,\cdot)$ where $A$ is an object and $\cdot$ is a partial morphism $A\times A\partar A$. Here, a \emph{partial morphism} $X\partar Y$ is an isomorphism class of spans $(m:Z\to X, f:Z\to Y)$ in which $m$ is monic. I refer to $\cdot$ as the \emph{application operator}. \end{defin}

The desired model of computation allows computable functions to work on non-computable data and therefore permits subobjects of $A$ to exclude computable elements. The following structure `filters' those non-computable subobjects out.

\begin{defin}[Filter] Let $\cat B$ be a Heyting category and let $A$ be a partial applicative structure in $\cat B$. A \emph{filter} is a \emph{subset} $\phi\subseteq\sub(A)$ with the following properties.
\begin{itemize}
\item If $U\in \phi$, then $U$ is \emph{inhabited}, i.e.\ the canonical map form $A_U$ to the terminal object $1$ is a regular epimorphism.
\item If $U\in \phi$ and $U\subseteq V$, then $V\in \phi$.
\item If $U,V\in \phi$ and $U\times V\subseteq \dom(\cdot)$ -- the domain of the application operator -- then $\im{(\cdot)}(U\times V) \in \phi$. Here $U\times V$ is actually $\pi_0^{-1}(U)\cap\pi_1^{-1}(V)$ where $\pi_0,\pi_1:A\times A\to A$ are the projection maps.
\end{itemize}
\end{defin}

\begin{example} The set of inhabited subobjects is a filter. If $C\subseteq A$ is a subobject such that for all $x,y\in C$, if $(x,y)\of\dom(\cdot)$ then $x\cdot y \in C$, then the set of subobjects that intersect $C$ -- i.e.\ $U\cap C$ is inhabited -- is a filter. \end{example}

Computability means `having a computable code on the universal machine' and is therefore formalized as follows.

\begin{defin}[Computable] Let $\cat B$ be a Heyting category, let $A$ be a partial applicative structure in $\cat B$ and let $\phi$ be a filter on $A$. For each $n\in\N$ an $n$-ary partial morphism $f:A^n\partar A$ is \emph{$\phi$-computable} if there is an $R\in \phi$ such that
\begin{itemize}
\item for all $r\in F$ and $\vec x\in A^{n-1}$, $((r\cdot x_1)\dotsm x_{n-1})$ is defined
\item for all $r\in F$ and $\vec y\in \dom f$, $((r\cdot y_1)\dotsm y_n)$ is defined and equal to $f(\vec y)$
\end{itemize}
Both of these statements should be interpreted in the internal language of $\cat B$. Therefore let $a_1:A\to A$ be the identity and let $a_{n+1}:A^{n+1} \partar A$ for $n>0$ satisfy:
\[ \dom(a_{n+1}) = (a_n\times \id)^{-1}(\dom(\cdot)),\quad a_{n+1} = (\cdot) \circ (a_n\times \id) \]
The reformulation is:
\begin{itemize}
\item $F\times A^{n+1} \subseteq \dom(a_n)$
\item $F\times \dom f\subseteq\dom(a_{n+1})$ and $a_{n+1}$ restricts to $f\circ\pi_{1\dotsc n}$ where $\pi_{1\dotsc n}$ is the projection $F\times \dom f\to\dom f$
\end{itemize}\label{computable}
\end{defin}

The following condition ensures that $\phi$-computability has enough power to realize the soundness of first order logic.

\begin{defin}[Combinatory completeness] Let $\cat B$ be a Heyting category, let $A$ be a partial applicative structure in $\cat B$ and let $\phi$ be a filter on $A$. The filter $\phi$ is combinatory complete if all projections $\vec x\mapsto x_i:A^n \to A$ are $\phi$-computable and if $\phi$-computable maps are closed under pointwise applications, i.e.\ if $f:A^n\partar A$ and $g:A^n\partar A$ are $\phi$-computable, then $(\cdot)\circ (f,g):A^n \partar A$ defined on $(f,g)^{-1}(\dom(\cdot))$ is $\phi$-computable too.
\end{defin}

\begin{example} There is a universal partial recursive function $\cdot:\N\times\N\partar \N$. The filter of inhabited subobjects is combinatory complete for the applicative structure $(\N,\cdot)$. This holds for each natural number object in each Heyting category. \end{example}

The following structures connect applicative structures to \emph{combinatory logic} \cite{MR1512218}.

\newcommand\comb\mathbf
\begin{defin}[Partial combinatory algebras] A \emph{partial combinatory algebra} is a partial applicative structure $(A,\cdot)$ with inhabited $\comb k,\comb s\in \sub(A)$ that satisfy:
\begin{itemize}
\item for all $k\in \comb k$ and $x,y\in A$, $(k\cdot x)\cdot y = x$.
\item for all $s\in \comb s$ and $x,y\in A$, $(s\cdot x)\cdot y$ is defined.
\item for all $s\in \comb s$ and $x,y,z\in A$, if $(x\cdot z)\cdot(y\cdot z)$ is defined then $((s\cdot x)\cdot y)\cdot z = (x\cdot z)\cdot(y\cdot z)$.
\end{itemize}
\end{defin}

\begin{theorem} A partial applicative structure $(A,\cdot)$ is a partial combinatory algebra if and only if it has a combinatory complete filter. \end{theorem}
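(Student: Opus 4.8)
The plan is to prove the two implications separately; each is the internal-language counterpart of a textbook fact about combinatory logic.

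For the implication ``partial combinatory algebra $\Rightarrow$ combinatory complete filter'' I would take $\phi$ to be the filter of all inhabited subobjects of $A$ (a filter by the first example following the definition of filter) and show that it is combinatory complete. The tool is bracket abstraction carried out over subobjects. To a closed term $M$ built from the two subobjects $\comb k,\comb s$ and the application operator one assigns a subobject $V(M)\in\sub(A)$ by recursion: $V(\comb k)=\comb k$, $V(\comb s)=\comb s$ and $V(MN)=\im{(\cdot)}\!\bigl(V(M)\times V(N)\bigr)$ whenever $V(M)\times V(N)\subseteq\dom(\cdot)$. Using the two combinatory axioms that force definedness -- $k\cdot x\converges$ for $k\in\comb k$, and $(s\cdot x)\cdot y\converges$ for $s\in\comb s$ -- together with the fact that $\im{(\cdot)}$ sends inhabited subobjects to inhabited subobjects, one checks by induction that $V(\lambda^* x_1\dotsc x_n.\,N)$ is well defined and inhabited for every combinatory term $N$ in the variables $x_1,\dotsc,x_n$; and using all three axioms one checks that every element of this subobject both exhibits, in the internal language, the expected reduction $(\lambda^* \vec x.\,N)\,y_1\cdots y_n=N[\vec y/\vec x]$ and stays defined when fewer than $n$ arguments are supplied. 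Taking $N=x_i$ yields an inhabited realizer for the projection $A^n\to A$, so all projections are $\phi$-computable; and given $\phi$-computable $f,g\colon A^n\partar A$ with inhabited realizers, applying a fixed combinator realizing $\lambda^* p\,q\,\vec x.\,(p\vec x)(q\vec x)$ to those realizers produces an inhabited realizer for the pointwise application $(\cdot)\circ(f,g)$, which is therefore $\phi$-computable. Hence $\phi$ is combinatory complete.

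For the converse, assume $\phi$ is combinatory complete. The binary projection $\pi^2_1\colon A^2\to A$ is $\phi$-computable, so it has a realizer $R\in\phi$; by the definedness and correctness clauses of definition \ref{computable}, every $r\in R$ satisfies $r\cdot x\converges$ and $(r\cdot x)\cdot y=x$ for all $x,y\in A$, and $R$ is inhabited because it lies in the filter -- so $R$ may serve as $\comb k$. For $\comb s$, consider the ternary partial morphism $h\colon A^3\partar A$ with domain $\set{(x,y,z)\mid (x\cdot z)\cdot(y\cdot z)\converges}$ and value $h(x,y,z)=(x\cdot z)\cdot(y\cdot z)$: it is obtained from the three projections $A^3\to A$ by two pointwise applications ($(x,y,z)\mapsto x\cdot z$, then $(x,y,z)\mapsto y\cdot z$, then apply one to the other), hence is $\phi$-computable. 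Its realizer $R_h\in\phi$ then satisfies, again by definition \ref{computable}, that $(s\cdot x)\cdot y\converges$ for all $s\in R_h$ and $x,y\in A$, and that $((s\cdot x)\cdot y)\cdot z=(x\cdot z)\cdot(y\cdot z)$ whenever the right-hand side is defined -- precisely the two axioms for $\comb s$ -- while $R_h$ is inhabited because it belongs to $\phi$. Thus $(A,\cdot)$ is a partial combinatory algebra.

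The only delicate point, and the one I would write out in full, is the internal-language bookkeeping behind bracket abstraction in the first implication. Because the combinator constants $\comb k$ and $\comb s$ are inhabited subobjects rather than chosen global elements (none need exist, inhabitedness being a statement about regular epimorphisms onto $1$), the ``value'' of a combinatory term is a subobject built with $\im{(\cdot)}$, and definition \ref{computable} demands that \emph{every} element of the chosen realizer -- not merely some element -- compute the map in question and remain defined at all lower arities. Verifying these universally quantified statements is a routine induction on the structure of the term, but it does have to be carried out with the quantifiers kept straight, which is why I single it out as the main obstacle.
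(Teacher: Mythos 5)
Your proof is correct and takes essentially the same route as the paper: the paper's forward direction likewise obtains $\comb k$ and $\comb s$ as realizers of the projection $(x,y)\mapsto x$ and of $(x,y,z)\mapsto(x\cdot z)\cdot(y\cdot z)$, and its converse simply cites the completeness of the $\{k,s\}$ basis, which is exactly the bracket-abstraction argument you spell out (the paper also uses the filter of inhabited subobjects). The only difference is that you carry out in detail the subobject-level bookkeeping that the paper delegates to a reference.
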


\begin{proof} Let $\phi$ be a combinatory complete filter on $(A,\cdot)$. The morphisms $(x,y)\mapsto x$ and $(x,y,z)\mapsto (x\cdot z)\cdot (y\cdot z)$ are computable by combinatory completeness. Hence there are suitable $\comb k$ and $\comb s\in \phi$. Since members of $\phi$ are inhabited $(A,\cdot)$ is a partial combinatory algebra.

Converse: the filter of inhabited subobjects of $A$ is combinatory complete, because the combinators $k$ and $s$ form a complete basis for combinatory logic (see \cite{MR1512218}).
\end{proof}

\begin{remark} The definition of computability says that as long as some parts of the input of a computable function are missing, the universal computer makes a computation which always halts. This is realistic, as a real computer can just store input and only start computing when every bit of input is collected. Without this property of computability, the combinatory complete filters are connected to \emph{conditional partial combinatory algebras} \cite{MR1233148}, which are equivalent to ordinary partial combinatory algebras for realizability (by proposition 1.2.3 of \cite{MR2479466}), but harder to use.
\end{remark}

\subsection{Computability}
This subsection contains technical results for the proof that the generalized categories of assemblies are Heyting categories. That proof is essentially the soundness theorem of first order intuitionistic logic for realizability interpretations. This section defines certain relations on partial applicative structure with the help of the \emph{$\lambda$-calculus} \cite{MR622912}, and demonstrates that these $\lambda$-definable relations contain $\phi$-computable morphisms for any filter $\phi$.

The $\lambda$-calculus describes a set of functions that act on each other. They determine a notion of computability that is equivalent to Turing computability.

\newcommand\la{$\lambda$}
\newcommand\cto{\mathrel\to_{\beta\eta}}
\begin{defin} A \emph{\la-term} $M$ is either a \emph{variable symbol} $x,y,z,\dots$ from some infinite \emph{set} of variables $V$, an \emph{application} $MN$ of \la-terms $M$ and $N$, or an \emph{abstraction} $(\lambda x.M)$ where $x$ is a variable symbol and $M$ is a $\lambda$-term. The set of all \la-terms is $\Lambda$. \emph{Substitution} is the following operation on \la-terms. For all variable symbols $x,y$ with $x\neq y$ and all \la-terms $M,N,P$:
\begin{align*}
(\lambda x.M)[P/y] &= \lambda x.(M[P/y]) & x[P/y] &= x & (MN)[P/y] &= (M[P/y])(N[P/y]) \\
(\lambda y.M)[P/y] &= \lambda y.M & y[P/y] &= P
\end{align*}
The \emph{$\beta\eta$-conversion} relation is the least preorder $\cto$ on \la-terms that satisfies:
\begin{itemize}
\item stability: if $M\cto N$, then $M[P/x]\cto N[P/x]$;
\item costability: if $M\cto N$, then $P[M/x]\cto P[N/x]$;
\item adjunction: $Mx\cto N$ if and only if $M\cto \lambda x.N$.
\end{itemize}
\end{defin}

\begin{remark} This definition highlights properties of $\beta\eta$-conversion that are convenient in this context. Ordinarily, $\cto$ is defined as the least preorder that satisfies:
\begin{itemize}
\item $\alpha$-equivalence: $\lambda x.M \cto \lambda y.M[y/x]$;
\item $\beta$-reduction: $(\lambda x.M)N\cto M[N/x]$;
\item $\eta$-expansion: $M\cto \lambda x.(Mx)$;
\item head reduction: if $M\cto N$, then $MP\cto NP$;
\item tail reduction: if $M\cto N$, then $PM\cto PN$;
\item $\zeta$-rule: if $M\cto N$, then $\lambda x.M\cto \lambda x.N$.
\end{itemize}
The proof that these conditions define the same relation is an exercise for the reader. See \cite{MR622912} for more \la-calculus.
\end{remark}

Any partially ordered set $P$ with monotone maps $\alpha: P\to P^P$ and $\lambda: P^P \to P$ such that $\alpha\dashv \lambda$ allows and interpretation of \la-terms which respects $\beta\eta$-conversion. Here the fibred poset $\sub(-\times A)$ plays the role of $P$ in the following manner.
The sets $\sub(X\times A\times A)$ represent certain functions $\sub(X\times A) \to\sub (X\times A)$, namely functions of the following form.
\[ V_*(U) = \subob{(x,b)\of X\times A| \exists (x,a)\of U. (x,a,b)\of V} \textrm{ for } V\in \sub(X\times A)\]
Here (and from now on) $\exists (x,a)\of U.\chi$ is short for $\exists (x',a)\of U. x=x'\land\chi$.
An adjunction between $\sub(A)$ and $\sub(A\times A)$ determines a lax kind of model of the $\lambda$-calculus.
Informally, the adjunction comes from the inverse and direct image maps of the `function':
\[ a,b\mapsto \subob{c\of A| (a,b)\of\dom(\cdot) \to a\cdot b = c }\]
The following formalizes this in a Heyting category without power objects.

\begin{defin} Form now on, let $x\cdot y\converges$ stand for $(x,y)\of\dom(\cdot)$.
For each object $X$ of $\cat B$ and each $U\in\sub(X\times A)$ and $V\in \sub(X\times A\times A)$ let
\begin{align*}
\alpha_X(U) &= \subob{(x,c,d)\of A^n\times A \times A| \exists (x,b)\of U. b\cdot c\converges \to b\cdot c = d }\\
\lambda_X(V) &= \subob{(x,b)\of A^n\times A| \forall c\of A. b\cdot c\converges\to (x,c,b\cdot c)\of V }\\
\end{align*}
\end{defin}

\begin{lemma}[$\alpha\dashv \lambda$] For all $U\in \sub(X\times A)$ and $V\in \sub(X\times A\times A)$, $\alpha_X(U)\subseteq V$ if and only if $U\subseteq \lambda_X(V)$. \label{AAA}\end{lemma}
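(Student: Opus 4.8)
Both halves of the claimed equivalence are inclusions of subobjects, so by soundness of many-sorted intuitionistic first order logic for the internal language of the Heyting category $\cat B$ I may argue throughout in that internal language. The first step is to unfold the definitions: $\alpha_X(U)\subseteq V$ says that in the context $(x,c,d)\of X\times A\times A$ the hypothesis $\exists(x,b)\of U.\,(b\cdot c\converges\to b\cdot c=d)$ entails $(x,c,d)\of V$, while $U\subseteq\lambda_X(V)$ says that in the context $(x,b)\of X\times A$ the hypothesis $(x,b)\of U$ entails $\forall c\of A.\,(b\cdot c\converges\to(x,c,b\cdot c)\of V)$. The plan is then to derive each of these two entailments from the other, i.e.\ to exhibit $\alpha_X$ as left adjoint to $\lambda_X$ on these posets of subobjects.

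For the direction $\alpha_X(U)\subseteq V\Rightarrow U\subseteq\lambda_X(V)$ I would argue internally: assume $(x,b)\of U$, take an arbitrary $c\of A$, assume $b\cdot c\converges$, and set $d:=b\cdot c$. Then $b$ itself witnesses $\exists(x,b')\of U.\,(b'\cdot c\converges\to b'\cdot c=d)$, because the implication holds by the choice of $d$; hence $(x,c,d)\of\alpha_X(U)$, and the hypothesis yields $(x,c,d)\of V$, that is $(x,c,b\cdot c)\of V$. Discharging the assumptions on $c$ and on $b\cdot c\converges$ gives exactly $(x,b)\of\lambda_X(V)$. For the converse I would assume $U\subseteq\lambda_X(V)$, take $(x,c,d)\of\alpha_X(U)$, and unpack the existential to get some $b$ with $(x,b)\of U$ together with the information relating $d$ to $b\cdot c$; feeding this $b$ to the hypothesis produces $\forall c'\of A.\,(b\cdot c'\converges\to(x,c',b\cdot c')\of V)$, and instantiating at $c'=c$ and combining with what is known about $d$ and $b\cdot c$ should give $(x,c,d)\of V$.

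Once the definitions are unfolded the two entailments are linked by a short chain of intuitionistically valid steps --- turning a hypothesis of the form $\exists(x,b)\of U.\,\psi$ into one universally quantified over $b$, currying a conjunctive hypothesis, and eliminating the variable $d$ that is pinned down by its relation to $b\cdot c$ --- all of which is bookkeeping already licensed by the adjoints and the Beck--Chevalley conditions in definition \ref{heytcat}, so I do not expect difficulty there. The one genuinely delicate point is the partiality of $\cdot$ in the converse direction: everything one can read off from $U\subseteq\lambda_X(V)$ concerns $V$ only where $b\cdot c$ is defined, so the argument must extract the definedness of $b\cdot c$, and its identification with $d$, from the witness for $(x,c,d)\of\alpha_X(U)$, and must never assume that an arbitrary $b\cdot c$ converges. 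Keeping every occurrence of $\converges$ in the right scope is, I expect, exactly what makes the equivalence hold, and it is the place where a careless argument would break.
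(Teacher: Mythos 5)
Your overall strategy is the same as the paper's: unfold both inclusions in the internal language of $\cat B$ and check the two entailments directly, with no extra machinery. Your forward direction ($\alpha_X(U)\subseteq V\Rightarrow U\subseteq\lambda_X(V)$) is complete and matches the paper's argument exactly: given $(x,b)\of U$ and $c\of A$ with $b\cdot c\converges$, the element $b$ itself witnesses $(x,c,b\cdot c)\of\alpha_X(U)$ because the implication $b\cdot c\converges\to b\cdot c=d$ holds by the choice $d=b\cdot c$.

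The converse is where your plan does not close, and you have in fact put your finger on exactly the problem without resolving it. You say the argument ``must extract the definedness of $b\cdot c$, and its identification with $d$, from the witness for $(x,c,d)\of\alpha_X(U)$.'' But that witness supplies only the conditional $b\cdot c\converges\to b\cdot c=d$; it does not supply $b\cdot c\converges$. If $(x,b)\of U$ and $b\cdot c$ fails to converge, the conditional is vacuously satisfied, so $(x,c,d)\of\alpha_X(U)$ for \emph{every} $d\of A$, and $\alpha_X(U)\subseteq V$ then forces $V$ to contain the whole fibre over such $(x,c)$ --- whereas $U\subseteq\lambda_X(V)$ imposes no constraint on $V$ there. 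Hence the step ``combining with what is known about $d$ and $b\cdot c$ should give $(x,c,d)\of V$'' cannot be carried out from the definitions as literally written. Be aware that the paper's own proof makes the same silent leap (it writes $(x,c,b\cdot c)=(x,c,d)\of V$, presupposing convergence), so you are reproducing, not missing, the intended argument; the adjunction becomes immediate in both directions if the body of $\alpha_X$ is read as the conjunction $b\cdot c\converges\land b\cdot c=d$ (the graph of the partial application) rather than the implication. To turn your plan into a proof you must either adopt that conjunctive reading or say where $b\cdot c\converges$ comes from; as stated it comes from nowhere.
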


\begin{proof} Assume $\alpha_X(U)\subseteq V$, i.e.\ for all $c,d\of A$, if there is an $(x,b)\of U$ such that $b\cdot c = d$ if $b\cdot c$ is defined, then $(x,c,d)\of V$. Let $(x,b)\of U$ and let $c\of A$ be such that $b\cdot c$ is defined. Then $(x,c,b\cdot c)\of V$ by assumption and $U\subseteq \lambda_X(V)$ follows by generalization. Now assume $U\subseteq \lambda_X(V)$, i.e.\ for all $(x,b)\of U$ and $c\of A$ such that if $b\cdot c$ is defined, then $(x,c,b\cdot c)\of V$. Let $(x,c,d)\of \alpha(U)$, so there is an $(x,b)\of U$ such that $b\cdot c = d$ if defined. Then $(x,c,b\cdot c) = (x,c,d)\of V$ by assumption and $\alpha_X(U)\subseteq V$ follows by generalization.
\end{proof}

Let me show how the fibred adjunction $\alpha_X\dashv \lambda_X$ satisfies $\cto$.

\newcommand\db[1]{\left[\!\left[ #1 \right]\!\right]}
\newcommand\FV{\mathrm{FV}}
\begin{defin} Let $V$ be enumerated: $V = \set{x_0,x_1,x_2,\dots}$. For each \la-term $M$ the set of \emph{free variables} $\FV(M)$ is defined as follows:
\[ \FV(x) = x, \quad \FV(MN) = \FV(M)\cup\FV(N), \quad \FV(\lambda x.M) = \FV(M)-\set{x} \]
For each \la-term $M$ let $\# M$ be the greatest $i$ such that $x_i\in \FV(M)$, or $-1$ if $\FV(M) = \emptyset$. For each $n\in \N$ interpret the \la-terms $M$ with $\# M<n$ as follows:
\begin{align*}
\db{x_i}_n &= \subob{(\vec a,a_i)\of A^n\times A| \vec a\of A^n } \\
\db{MN}_n &= \alpha_{A^n}(\db M_n)_*(\db N_n) \\
 &= \subob{(\vec a,d)\of A^n\times A| \exists (\vec a,b)\of \db M_n,(\vec a,b)\of \db N_n. b\cdot c\converges \to b\cdot c = d} \\
\db{\lambda x_i.M}_n &= \lambda_{A^n}(\db{M[x_n/x_i]}_{n+1})
\end{align*}
\end{defin}

\begin{lemma} For each $n\in \N$ and each pair of \la-terms $M,N$ such that $\# M$, $\# N\leq n$, if $M\cto N$, then $\db M\subseteq \db N$. \end{lemma}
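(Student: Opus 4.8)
Since $\cto$ is, by definition, the \emph{least} preorder on \la-terms closed under stability, costability and adjunction, I would prove the lemma by producing a preorder with these three properties that still contains every inclusion we need. Concretely, set $R=\set{(M,N)\mid \db{M}_n\subseteq\db{N}_n\text{ for every }n\text{ for which }\db{M}_n,\db{N}_n\text{ are both defined}}$. If $R$ is a preorder satisfying stability, costability and adjunction, then $\cto\subseteq R$ by minimality, and that is the assertion (reading $\db{M}\subseteq\db{N}$ as shorthand for this uniform inclusion). Reflexivity and transitivity of $R$ are immediate from those of $\subseteq$ on each $\sub(A^n\times A)$, so everything sits in the three closure conditions, whose only substantial input is Lemma \ref{AAA} together with the monotonicity of the pieces from which $\db{-}_n$ is assembled.

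For \textbf{costability} --- $(M,N)\in R$ implies $(P[M/x],P[N/x])\in R$ --- I would induct on $P$. The variable case is either the hypothesis $(M,N)\in R$ itself (when $P$ is $x$) or a trivial equality (when $P$ is a different variable); the clause $V\mapsto V_*(U)$ is monotone in $V$ by inspection; and $\alpha_X$, $\lambda_X$ are monotone since, being the two halves of the Galois connection of Lemma \ref{AAA}, they preserve order. The abstraction clause $\db{\lambda x_i.Q}_n=\lambda_{A^n}(\db{Q[x_n/x_i]}_{n+1})$ bakes in the capture-avoiding renaming $x_i\mapsto x_n$, so the induction must be run together with a \emph{renaming lemma} stating that $\db{-}$ is invariant under injective renamings of free variables, up to the evident reindexing of the ambient power of $A$.

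For \textbf{stability} --- $(M,N)\in R$ implies $(M[P/x],N[P/x])\in R$ --- the key tool is a \emph{substitution lemma} presenting $\db{M[P/x_j]}_n$ as $\Theta(\db{M}_m)$ for a fixed \emph{monotone} operation $\Theta$ on subobjects (informally: compose the relation $\db{M}$ with $\db{P}$ substituted into its $j$-th argument slot). Granting it, $\db{M}_m\subseteq\db{N}_m$ is carried by $\Theta$ to $\db{M[P/x_j]}_n\subseteq\db{N[P/x_j]}_n$. Setting up the substitution lemma (and the renaming lemma above) cleanly wants a \emph{weakening lemma}, namely $\db{M}_{n+1}=\pi^{-1}(\db{M}_n)$ where $\pi\colon A^{n+1}\times A\to A^n\times A$ drops the $n$-th coordinate; I would prove it by induction on $M$, the inductive steps being Beck-Chevalley computations to the effect that $\alpha$, $\lambda$ and $(-)_*$ commute with reindexing along a product projection. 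The weakening lemma also takes care of the housekeeping needed when $M$, $N$, $Mx_i$ and $\lambda x_i.N$ occur together with different smallest admissible values of $n$: it reduces ``$\db{M}_n\subseteq\db{N}_n$ for all admissible $n$'' to the single least one.

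Finally \textbf{adjunction} --- $(Mx_i,N)\in R$ iff $(M,\lambda x_i.N)\in R$ --- is where Lemma \ref{AAA} is used head on. Unfolding, $\db{Mx_i}_n=\alpha_{A^n}(\db{M}_n)_*(\db{x_i}_n)$ whereas $\db{\lambda x_i.N}_n=\lambda_{A^n}(\db{N[x_n/x_i]}_{n+1})$; after simplifying the $(-)_*$ against the projection-type subobject $\db{x_i}_n$ and rewriting $N[x_n/x_i]$ via the substitution and renaming lemmas, the inclusions ``$\db{Mx_i}_n\subseteq\db{N}_n$'' and ``$\db{M}_n\subseteq\db{\lambda x_i.N}_n$'' turn into the two sides of the adjunction $\alpha_{A^n}\dashv\lambda_{A^n}$ applied to one and the same pair of subobjects, hence are equivalent. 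I expect the real obstacle to be precisely this indexing discipline: stating the substitution, renaming and weakening lemmas with the variable counts and the capture-avoiding renaming built in correctly, so that Lemma \ref{AAA} applies without an index mismatch. With those in hand the three closure properties are routine monotonicity bookkeeping, and the lemma follows.
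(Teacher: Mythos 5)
Your proposal is correct and follows essentially the same route as the paper: exploit the minimality of $\cto$ by checking the three closure rules against the inclusion relation, using a substitution lemma of the form $\db{M[P/x_{n+1}]}_n=(\db{M}_{n+1})_*(\db{P}_n)$ together with monotonicity of $V_*(U)$ for stability and costability, and Lemma \ref{AAA} for the adjunction rule. You are merely more explicit than the paper about the renaming/weakening bookkeeping, which is a reasonable elaboration rather than a different argument.
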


\begin{proof} This is a proof by nested induction, namely induction over the rules that define substitution inside induction over the rules that define conversion. 

The following equality holds by induction over the substitution rules:
\[ \db{M[P/x_{n+1}]}_n = (\db{M}_{n+1})_*(\db{P}_n) \]
The operation $V_*(U)$ is monotone in both variables and therefore $\db-_n$ respects both stability and costability.
Lemma \ref{AAA} show that $\db-_n$ also respects the adjunction between abstraction and application. Hence if $M\cto N$, then $\db M\subseteq \db N$.
\end{proof}

I extend the possibilities for defining relations with the \la-calculus a little bit further. The lemmas above generalize to the extended language without trouble.

\begin{defin} A \la-term with constants is a \la-term which besides variables has elements of $\sub(A)$ as atomic subterms. The set of \la-terms with constants is $\Lambda_{\sub(A)}$. For $U\in \sub A$, let $\FV(U) = \emptyset$ and for each $n\in \N$ let $\db U_n = A^n\times U$.
\end{defin}

To simplify notation, I use to following conventions: 
\begin{itemize}
\item Leave out subscript $0$, so $\db M = \db M_0$;
\item Leave out repetitions of $\lambda$, as with quantifiers: \[ \lambda x_0,\dotsc, x_n.M = \lambda x_0.\dotsc\lambda x_1. M\]
\item Application has priority over abstraction and associates to the right, so $\lambda x,y,z.xyz = \lambda x,y,z.((xy)z)$.
\end{itemize}


The end of this subsection is about \la-definable subobjects of $A$ that are members of any combinatory complete filter $\phi$. 
Let me explain why not all \la-definable subobjects are elements of every filter. If $U,V\in \phi$, then $UV$ is a \la-term with constants and 
\[ \db {UV} = \subob{w\of A| \exists u\of U,v\of V. u\cdot v\converges \to u\cdot v = w }\]
Classically, $\db {UV}\in \phi$, because either $u\cdot v\converges$ for all $u\of U$ and $v\of V$, in which case $\db {UV} = \im{(\cdot)}(U\times V)\in \phi$ by definition, or $\db {UV} = A$, which is in $\phi$ simply because there are computable morphisms and $\phi$ is upward closed. It is unclear what happens when $\forall u\of U,v\of V.u\cdot v\converges$ is undecidable. Therefore $\phi$ may exclude $\db {UV}$. It turns out that combinatory complete filters only bar applications, however.

\begin{lemma}[Computable terms] Let $\Lambda_\phi$ be \la-terms whose constants are in $\phi$. Let $M\in \Lambda_\phi$ such that $\FV(M)=\emptyset$ and such that $M$ is not the application of two other \la-terms. Then $\db M\in \phi$. \label{computableterms} \end{lemma}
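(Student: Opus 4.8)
The plan is to induct on the structure of the closed $\lambda$-term $M\in\Lambda_\phi$, using the hypothesis that $M$ is not an application to reduce to a small number of cases. Since $\FV(M)=\emptyset$, the term $M$ is either a constant $U\in\phi$, or an abstraction $\lambda x_0.M'$; a bare variable is impossible because it would have a free variable. The constant case is immediate: $\db U=\db U_0=A^0\times U\cong U\in\phi$ by definition of the filter. So the whole content is in the abstraction case.

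For $M=\lambda x_0.M'$, I would first apply $\beta\eta$-conversion to put $M'$ into a convenient normal-ish form. The key observation is that any closed term is $\beta\eta$-convertible to one built from constants, bound variables, and abstractions, and since $\db{-}$ is monotone under $\cto$ (the previous lemma), it suffices to prove $\db M\in\phi$ for $M$ in that form; moreover $\phi$ is upward closed, so I only need $\db M$ to contain some member of $\phi$. The idea is then to compare $M$ with the term $\lambda x_0.x_0 M''$ or, more usefully, to exhibit a $\phi$-computable morphism whose ``graph'' subobject is contained in $\db M$, and invoke combinatory completeness together with the theorem that $\phi$-computable $n$-ary partial morphisms have realizers in $\phi$ (the map $R\in\phi$ in Definition \ref{computable}). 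Concretely: an abstraction $\lambda x_0.M'$ with $M'$ closed-up-to-$x_0$ denotes a subobject of $A^0\times A=A$, and one shows it contains the realizer set $R$ of the $\phi$-computable partial morphism $A\partarrow A$ that $M'$ defines — the point being that $\db{M'}_1\in\sub(A\times A)$ is, by induction and the computable-terms behaviour of applications of members of $\phi$, large enough to contain the graph of a $\phi$-computable map, and $\lambda_{A^0}$ of a subobject containing such a graph contains a corresponding code, which lies in $\phi$ by combinatory completeness.

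The main obstacle is precisely the interaction between abstraction and application flagged in the paragraph before the lemma: $\db{UV}$ need not lie in $\phi$ when $\forall u\of U, v\of V.\, u\cdot v\converges$ is undecidable, so I cannot simply push the induction through subterms that are applications. The fix is to never evaluate an inner application ``eagerly'': instead of analyzing $M'$ as a nested application inside the abstraction, I would show directly that $\lambda x_0.M'$ is $\beta\eta$-equal to a term of the shape $\comb s$-combinator-style skeleton applied to closed pieces, so that $\db{\lambda x_0.M'}$ contains the denotation of a genuinely $\phi$-computable morphism — which is total enough on the relevant domain that the convergence side-conditions become harmless. In other words, the abstraction ``absorbs'' the problematic partiality because $\lambda_X$ quantifies universally over $c\of A$ with the guard $b\cdot c\converges\to\cdots$, and a realizer from a combinatory-complete $\phi$ meets exactly that guarded specification. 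I would carry this out by: (1) disposing of the constant case; (2) for $M=\lambda x_0.M'$, normalizing $M'$ and identifying the $\phi$-computable partial morphism $f:A\partarrow A$ it presents with realizer set $R\in\phi$; (3) checking $\db{x_0}_1$ and $\db{U}_1$ are as computed and that $\db{-}_1$ of the relevant application subterms contains the graph of $f$; (4) concluding via Lemma \ref{AAA} that $R\subseteq\db{\lambda x_0.M'}$, hence $\db M\in\phi$ since $\phi$ is upward closed. The delicate bookkeeping step, and the one I expect to spend the most care on, is (3): verifying that the guarded-implication form of $\alpha_X$ and $\lambda_X$ makes the undecidable convergence statements vacuous against a realizer, which is the technical heart of why ``combinatory complete filters only bar applications.''
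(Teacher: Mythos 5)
Your overall strategy---split into the constant and abstraction cases, reduce the abstraction case to combinatory completeness, and let the guarded form of $\lambda_X$ absorb the undecidable convergence statements---is the right one, and your closing observation that a realizer meets the guarded specification $b\cdot c\converges\to(x,c,b\cdot c)\of V$ is indeed the technical heart. But the step carrying all the weight, ``identifying the $\phi$-computable partial morphism $f:A\partar A$ that $M'$ presents with realizer set $R\in\phi$,'' is exactly what has to be proved, and the induction as you set it up does not reach it. You recurse on a single abstraction $\lambda x_0.M'$ and treat $M'$ as presenting a unary partial morphism; but $M'$ is not closed (it contains $x_0$ free), may itself be an abstraction, and may contain constants $U\in\phi$, which denote relations valued in subobjects rather than morphisms---so neither the lemma's statement nor your ``graph of $f$'' picture applies to the subterms you need to recurse into. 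The paper organizes the induction differently: strip the \emph{maximal} prefix $\lambda x_0,\dotsc,x_n$, so the body $N$ is a variable, a constant, or an application, and prove directly that $\db{\lambda\vec x.N}\in\phi$ in each case. A variable body gives a projection, $\phi$-computable by combinatory completeness; a constant body $N\in\phi$ is handled by $\im{(\cdot)}(\db{\lambda y,\vec x.y}\times N)$, using that this product lies in $\dom(\cdot)$ and that $\phi$ is closed under application images; and for $N=PQ$ the recursion is on $\lambda\vec x.P$ and $\lambda\vec x.Q$---the prefix travels with the subterm, which is the device your single-$\lambda$ setup is missing.

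Your proposed detour through $\beta\eta$-normalization and an ``$\comb s$-combinator skeleton applied to closed pieces'' is also both unnecessary and hazardous: untyped terms need not normalize, and once you rewrite $\lambda x.PQ$ as an application of closed terms you are back at exactly the obstacle you quoted, since $\db{\lambda\vec x.P}\times\db{\lambda\vec x.Q}\subseteq\dom(\cdot)$ is not automatic ($\lambda_X$ does not force totality of its elements), so the filter's closure under $\im{(\cdot)}$ cannot be invoked at those application nodes. The paper stays inside the $\lambda$-interpretation and uses combinatory completeness in the form ``pointwise applications of $\phi$-computable families are $\phi$-computable,'' where the clause of Definition \ref{computable} requiring applications to incomplete argument lists to converge is precisely what makes the intermediate applications harmless; the resulting realizer set in $\phi$ then sits inside $\db{\lambda\vec x.PQ}$ by the guarded clause, and upward closure finishes. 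So the gap is not in the guiding idea but in the inductive bookkeeping: without the maximal-prefix induction and the constant-under-binder case, the argument does not close.
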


\begin{proof} Since $M$ has no free variables and is no application, it is either a constant in $\phi$ or an abstraction. If $M\in \phi$, then $\db M = M\in \phi$ by assumption. This leaves the case that $N$ is an abstraction. If $M$ is an abstraction, then is it of the form $\lambda x_0,\dotsm, x_n.N$ (for arbitrary $n$) where either $N=x_i$ for some $i\leq n$, $N\in \phi$  or $N = PQ$ for some other \la-terms $P$, $Q$.
\begin{itemize}
\item if $N = x_i$ then $\db{\lambda \vec x.N}$ is precisely the set of codes of the projection $\vec x\mapsto x_i$, which is in $\phi$, because this projection is $\phi$-computable.
\item if $N\in \phi$, then $\db{\lambda y,\vec x.y}\times N\subseteq \dom(\cdot)$ since projections are $\phi$-computable and $\db{\lambda \vec x.N} = \im{(\cdot)}(\db{\lambda y,\vec x.y}\times N)$.
\item if $N=PQ$, $\db{\lambda \vec x. P}\in \phi$ and $\db{\lambda \vec x. Q}\phi$, then $\db{\lambda \vec x. PQ}\in\phi$ for the following reason. Repeated application determines the following families of computable functions:
 \begin{align*}
 a_n&: \db{\lambda \vec x. P}\times A^n \partar A\\
 a_n&: \db{\lambda \vec x. Q}\times A^n \partar A\\
 a_n&: \db{\lambda \vec x. PQ}\times A^n \partar A\\
 \end{align*}
The last family is the pointwise application of the first two. The members of first two are $\phi$-computable because $\db{\lambda \vec x. P}$ and $\db{\lambda \vec x. Q}\in \phi$. Therefore the members the last family $\phi$ computable. Hence $\db{\lambda \vec x. PQ}\in \phi$.
\end{itemize}
So for every $M\in \Lambda_\phi$ such that $\FV(M) = \emptyset$ and $M$ is not the application of two other terms $\db M\in \phi$ by induction over subterms of $M$.
\end{proof}

\section{Categories of assemblies}\label{Catsofasms}
This section generalizes the definition of the category of assemblies in the introduction, replacing sets, functions and partial recursiveness with objects and morphisms from a Heyting categories and partial computable morphisms. I have to make two adjustments. Firstly, assemblies are bundles of subobjects of $A$, because the power object $\pow A$ may be absent from $\cat B$. Secondly, inhabited families of computable morphisms rather than individual morphisms determine which morphisms between assemblies are total.

\newcommand\uo{\mathrm U}
\newcommand\rr[1]{\rho_{#1}}
\begin{defin}[Category of assemblies] Let $\cat B$ be a Heyting category, let $(A,\cdot)$ be a partial combinatory algebra of $\cat B$ and let $\phi$ be a combinatory complete filter of $(A,\cdot)$. An \emph{$A$-assembly} is a pair $(X,\xi)$ where $X$ is an object of $\cat B$ and $\xi$ is a member of $\sub(A\times X)$ that satisfies: \[ \cat B\models \forall x\of X.\exists a\of A. \xi(a,x) \]
For each assembly $X = (Y,\Phi)$ let $\uo X = Y$ and $\rr X = \Phi$ in order to save variables. 

Let $X$ and $Y$ be assemblies. A subobject $R\in \sub(A)$ \emph{tracks} $f:\uo X\to \uo Y$ if
\[ \cat B\models \forall a,b\of A, x\of X. a\of R \land (b,x)\of \rr X \to a\cdot b\converges\land (a\cdot b,y)\of\rr Y \]
A morphism $f:\uo X\to \uo Y$ is \emph{$\phi$-total} if some $R\in\phi$ tracks it.
The $\phi$-total morphisms are closed under composition and identities because computable functions are.
The \emph{category of assemblies} $\Asm(\cat B, (A,\cdot),\phi)$ consists of $A$-assemblies and $\phi$-total morphisms.\label{Asmbaphi}
\end{defin}

An $A$-assembly $X$ is a fresh subobject of the object $\uo X$ in $\cat B$ whose membership is determined by the relation $\rr X$. A morphism between underlying object is $\phi$-total, if there are $\phi$-computable morphisms to take care of the realizers. This is all parallel to how the ordinary category of assemblies was defined. In fact, that category is an example.

\begin{example} If $\cat S$ is the category of sets, $\cdot:\N\times\N\partar \N$ a universal partial recursive function and if $\phi$ is the filter of inhabited subobjects of $\N$, then $\Asm(\cat S,(\N,\cdot), \phi)$ and the category of assemblies of definition \ref{egg} are equivalent. \end{example}

\begin{remark}[Alternative] Spans $(a:Y\to A, x:Y\to X)$ represent every subobject in $\sub(A\times X)$ at least once. For this reason, there is an equivalent category whose objects are spans and whose definition is less dependent on an ambient set theory or on the internal language of $\cat B$. Such a definition makes the proof that the category of assemblies is a Heyting category much more complicated, however. \end{remark}

\subsection{Regularity}
The rest of this section is a tedious check that categories of assemblies are Heyting categories. This subsection goes halfway with a proof that $\Asm(\cat B,(A,\cdot),\phi)$ is \emph{regular}.


\begin{defin}[Regular] Let $\cat C$ be a category with finite limits. A \emph{kernel pair} of a morphism $f:X\to Y$ in $\cat C$ is a pair of morphism $p,q:W\to X$ such that $f\circ p = f\circ q$ is a pullback square.
\[\xymatrix{
W\ar[r]^p \ar[d]_q \ar@{}[dr]|<\lrcorner & X\ar[d]^f\\
X \ar[r]_{f} & Z
}\]
A regular epimorphism is a morphism that is the coequalizer of its own kernel pair. A $\cat C$ is \emph{regular} if regular epimorphisms are stable under pullback and if each kernel pair has a coequalizer.

A functor between regular categories is regular if it preserves finite limits and regular epimorphisms. \label{regular}
\end{defin}

\begin{example} Categories of algebras and homomorphisms are usually regular. So are all toposes. \end{example}

Upcoming proofs require the following properties of regular categories and functors.

\begin{defin} For each $f:X\to Y$ in a regular category, and each monic $m:W\to X$, $f\circ m$ factors into a regular epimorphism followed by a monomorphism in an up to isomorphism unique way. Since this factorization preserves isomorphisms between monomorphisms, it induces a function $\sub(X)\to\sub(Y)$, which equals the \emph{direct image map} in Heyting categories. Hence I denote it by $\im f$. \end{defin}

\begin{lemma}\label{reglog} Let $\cat C$ be a regular category. For each $f:X\to Y$ of $\cat C$ the inverse image map $\pre f:\sub(Y)\to\sub(X)$ has a left adjoint $\im f$ which satisfies the Beck-Chevalley condition. Each regular $F:\cat \cat C \to \cat D$ be a regular functor preserves monomorphisms and therefore induced family of morphisms $F_X:\sub(X)\to\sub(FX)$, which commutes with all inverse image maps and all of their left adjoints.
\[\xymatrix{
\sub(X)\ar[d]_{F_X} \ar[r]^{\im f} & \sub(Y) \ar[d]_{F_Y} \ar[r]^{\pre f} & \sub(X) \ar[d]^F \\
\sub(FX)\ar[r]_{\im{Ff}} & \sub(FY) \ar[r]_{\pre{Ff}} & \sub(X) 
}\]\end{lemma}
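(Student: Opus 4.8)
The plan is to prove the three claims in order: first that $\pre f$ has a left adjoint $\im f$ with Beck-Chevalley, then that a regular functor preserves monomorphisms, and finally that the induced maps $F_X$ commute with $\pre{(-)}$ and $\im{(-)}$. For the left adjoint, I would take the direct image map $\im f$ defined just above via the regular epi/mono factorization of $f\circ m$, and verify the adjunction $\im f \dashv \pre f$ directly: given $m\in\sub(X)$ and $n\in\sub(Y)$, I must show $\im f(m)\subseteq n$ iff $m\subseteq \pre f(n)$. The "only if" direction is the easy one — if $f\circ m$ factors through $n$, then pulling back along $f$ shows $m$ factors through $\pre f(n)$. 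For "if", suppose $m\subseteq \pre f(n)$; factor $f\circ m = i\circ e$ with $e$ regular epi and $i$ monic, and use the universal property of the coequalizer $e$ of the kernel pair of $f\circ m$ together with the fact that the composite through $n$ already agrees on that kernel pair, to get the required factorization of $i$ through $n$. Beck-Chevalley then follows from the standard argument: given a pullback square, the pasting/cancellation lemmas for pullbacks plus stability of regular epis under pullback show that pulling back a regular epi/mono factorization stays a regular epi/mono factorization, so $\pre f \circ \im g = \im{g'}\circ\pre{(f')}$.

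For the second claim, that a regular functor $F$ preserves monomorphisms: a morphism $m$ is monic iff the diagonal $\Delta\colon W\to W\times_X W$ (i.e.\ the map into the pullback of $m$ against itself) is an isomorphism, equivalently iff its kernel pair consists of two copies of the identity. Since $F$ preserves finite limits it preserves this pullback, and hence $Fm$ has identity kernel pair, so $Fm$ is monic. Because $F$ also preserves isomorphisms between monomorphisms (being a functor), it induces the claimed maps $F_X\colon\sub(X)\to\sub(FX)$.

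For the third claim, commutation with inverse images is immediate: $F$ preserves the pullback square defining $\pre f(n)$, so $F_X(\pre f(n)) = \pre{Ff}(F_Y(n))$. Commutation with direct images is the analogue for factorizations: $F$ preserves regular epis and monos and preserves composition, so it sends the factorization $f\circ m = i\circ e$ to the factorization $Ff\circ Fm = Fi\circ Fe$, which by uniqueness of such factorizations is the one computing $\im{Ff}(F_X m)$; hence $F_Y(\im f(m)) = \im{Ff}(F_X m)$.

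The main obstacle I anticipate is the "if" direction of the adjunction $\im f\dashv\pre f$ — this is where one genuinely needs the hypothesis that regular epimorphisms are effective (coequalizers of their kernel pairs) rather than merely that factorizations exist, and it requires a careful diagram chase showing that a map out of $X$ that is constant on the kernel pair of $f\circ m$ but a priori only known to be constant on the kernel pair of $f$ still descends appropriately. Everything else is routine manipulation of pullback lemmas and the uniqueness of regular epi/mono factorizations.
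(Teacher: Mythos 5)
The paper explicitly leaves this lemma as an exercise, and your plan is precisely the standard argument one would give: image factorization for the left adjoint, stability of regular epimorphisms under pullback for Beck--Chevalley, kernel pairs for preservation of monomorphisms, and uniqueness of regular epi/mono factorizations for the commutation of $F_X$ with $\exists_f$. The only wrinkle is in your phrasing of the ``if'' direction of the adjunction: the clean way to say it is that if $n\circ u = i\circ e$ with $i,n$ monic and $e$ the coequalizer of its kernel pair, then $u$ coequalizes that kernel pair (because $n$ is monic), hence $u$ factors through $e$ and, since $e$ is epic, $i$ factors through $n$ --- which is exactly the diagram chase you anticipate, so the plan is sound.
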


\begin{proof} This proof is left as exercise for the reader. \end{proof}

\begin{defin} The following operators construct assemblies with useful properties.\label{hulpfuncties}
\begin{itemize}
\item For every object $X$ of $\cat B$, $\nabla X = (X,A\times X)$.

Assemblies of this form have the property that each morphism $\uo Y \to X$ determines a $\phi$-total morphism $Y\to \nabla X$. In particular $\id_{\uo Y}:Y\to\nabla\uo Y$ is $\phi$-total for each assembly $Y$.

\item For every assembly $Y$ and every $f:X\to \uo Y$ let:
\[  f^*(Y) = (X,\pre{(\id_A\times f)}(\rr Y)) \]

The important property of this construction is that $f\circ g:W\to Y$ is $\phi$-total if and only if $g:W\to f^*(Y)$ is. The reason is that if $R\in \phi$ tracks one, it automatically also tracks the other.

\item For every assembly $X$ and every regular epimorphism $e:\uo X\to Y$ let:
\[ e_*(X) = (Y, \im{(\id_A\times f)}(\rr X))\]

This construction is dual to the last: its property is that $f\circ e:X\to Z$ is $\phi$-total if and only if $f:e_*(X)\to Z$ is. The reason that this only works for regular epimorphisms, is that each assembly $Y$ has to satisfy $\cat B\models\forall x\of \uo Y. \exists a\of A.\rr Y$. Note that $e_*e^*(Y) = Y$: because $\id_A\times e$ is a regular epimorphism, $\im{(\id_A\times e)}\circ \pre{(\id_A\times e)} = \id_{\sub(\uo Y)}$. Also note that if $f\circ e' = e\circ f'$ is a pullback square in $\cat B$, then $e'_*\circ (f')^* = f^* \circ e_*$ because of the Beck-Chevalley condition in $\cat B$.
\[ \xymatrix{
\bullet \ar[r]^{f'}\ar[d]_{e'}\ar@{}[dr]|<\lrcorner & \bullet \ar[d]^e\ar@{}[drr]|\Longrightarrow && \bullet \ar[d]_{e'_*} & \bullet \ar[l]_{(f')^*}\ar[d]^{e_*}\\
\bullet \ar[r]_f & \bullet && \bullet & \bullet \ar[l]^{f^*}
}\]

\item For every pair of assemblies $Y$ and $Y'$ for which $\uo Y = \uo Y'= X$ let:
\[ Y\otimes_X Y' = \left(X,\subob{(a,x)\of A\times X \middle | \begin{array}{l}
 \forall k\of \db{\lambda x,y.x}.a\cdot k\converges, (a\cdot k,x)\of\rr Y,\\
 \forall l\of \db{\lambda x,y.y}.a\cdot l\converges,(a\cdot l,x)\of\rr{Y'}
\end{array}}\right) \]

In this case, $Y\otimes_X Y'$ is the pullback of the canonical morphisms $\id_X:Y\to\nabla\uo Y$ and $\id_X:Y'\to\nabla\uo Y'$ for the following reasons. The morphisms $\id_X:Y\otimes_X Y'\to Y$ and $\id_X:Y\otimes_X Y'\to Y$ are $\phi$-total because $\db{\lambda x.x(\lambda y,z.y)}$ and $\db{\lambda x.x(\lambda y,z.z)}\in \phi$. Of each pair of $\phi$-total morphisms $p:Z\to Y$ and $q:Z\to Y'$ such that $\id_{\uo Y}\circ p = \id_{\uo Y}\circ q$ the underlying maps are equal. Finally, if $U\in \phi$ tracks $p:Z\to Y$ and $V\in \phi$ tracks $p:Z\to Y'$, then $\db{\lambda xy.y(Ux)(Vx)}$ tracks $p: Z\to Y\otimes_X Y'$. 

Note that for each $f:W\to X$, $f^*(Y\otimes_X Y') = f^*(Y)\otimes_W f^*(X)$.
\end{itemize}
\end{defin}

\begin{remark} It may please readers familiar with fibred categories to know that the underlying object map is a fibred bounded meet semilattice. It has supine morphisms over all the regular epimorphisms and because these supine morphisms are prone, $\uo$ is also a stack for the regular topology of $\cat B$ (see \cite{MR2223406} for more stacks).\end{remark}

\begin{theorem} The category $\Asm(\cat B,(A,\cdot),\phi)$ is regular. Moreover, $\uo$ and $\nabla$ extend into a pair of regular functors $\uo:\Asm(\cat B,(A,\cdot),\phi)\to\cat B$ and $\nabla:\cat B \to \Asm(\cat B,(A,\cdot),\phi)$ where $\uo$ is left adjoint to $\nabla$. \label{basis}
\end{theorem}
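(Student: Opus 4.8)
The plan is to build the finite limits out of the auxiliary constructions of Definition~\ref{hulpfuncties}, read the adjunction $\uo\dashv\nabla$ off almost for free, analyse regular epimorphisms through the regular-epi/mono factorization of $\cat B$, and finally check pullback-stability and the regularity of both functors. I will use repeatedly that $\uo$ is faithful (a $\phi$-total morphism is determined by its underlying morphism), so a morphism of $\Asm(\cat B,(A,\cdot),\phi)$ with monic underlying morphism is itself monic, and a $\phi$-total morphism whose underlying morphism is the identity is an isomorphism precisely when the reverse underlying identity is also $\phi$-total.

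\emph{Finite limits.} First I would show $\nabla 1$ is terminal: for any assembly $X$ the unique map $\uo X\to 1$ of $\cat B$ is $\phi$-total as a morphism $X\to\nabla 1$ because, by Definition~\ref{hulpfuncties}, every underlying morphism into a $\nabla$-assembly is $\phi$-total. For the pullback of $\phi$-total $f\colon X\to Z$ and $g\colon Y\to Z$ I would form the pullback $P$ of $\uo f$ and $\uo g$ in $\cat B$ with projections $p,q$ and equip $P$ with the assembly $Q=p^*(X)\otimes_P q^*(Y)$: the $\otimes$-property of Definition~\ref{hulpfuncties} makes $\id_P$ a $\phi$-total morphism into $p^*(X)$ and into $q^*(Y)$, the $(-)^*$-property turns these into $\phi$-total morphisms $Q\to X$ and $Q\to Y$, and the universal property of $Q$ follows by combining the universal property of $\otimes$ with the $(-)^*$-property to $\phi$-track the mediating morphism of any competing cone. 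Hence $\Asm(\cat B,(A,\cdot),\phi)$ has all finite limits and, by construction, $\uo$ preserves them.

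\emph{The adjunction.} For $g\colon X\to X'$ in $\cat B$ the morphism with underlying map $g$ is $\phi$-total into $\nabla X'$, so $\nabla$ is a functor; taking $\eta_X=\id_{\uo X}\colon X\to\nabla\uo X$ (which is $\phi$-total) as unit and $\epsilon_X=\id_X\colon\uo\nabla X\to X$ as counit, the triangle identities hold on the nose, so $\uo\dashv\nabla$ and $\uo\nabla=\id_{\cat B}$. Being a right adjoint, $\nabla$ preserves all limits, in particular the finite ones.

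\emph{Regular epimorphisms.} Given $f\colon X\to Y$, I would factor $\uo f=j\circ e$ in $\cat B$ with $e\colon\uo X\to I$ a regular epimorphism and $j$ monic, so that $e\colon X\to e_*(X)$ is $\phi$-total and $j\colon e_*(X)\to Y$ is $\phi$-total (as $j\circ e=f$ is) and monic. The crux is that $e\colon X\to e_*(X)$ is the coequalizer of the kernel pair of $f$: indeed $\uo$ preserves the pullback computing that kernel pair, so its underlying diagram is the kernel pair of $\uo f$, hence of $e$ since $j$ is monic; a $\phi$-total $h\colon X\to W$ coequalizing it therefore has a unique underlying mediating morphism $I\to\uo W$, which the $e_*$-property upgrades to a $\phi$-total morphism $e_*(X)\to W$, unique because $e$ is epic. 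From this I would read off: every kernel pair has a coequalizer; $\uo$ sends regular epimorphisms to regular epimorphisms; and $f$ is a regular epimorphism exactly when $\uo f$ is one and the comparison morphism $f_*(X)\to Y$ (which has underlying map $\id_{\uo Y}$) is an isomorphism.

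\emph{Pullback-stability and regularity of the functors.} For a regular epimorphism $e\colon X\to Y$ and any $g\colon Z\to Y$, form the pullback $Q\to Z$ as above; its underlying morphism is the pullback of $\uo e$ along $\uo g$, a regular epimorphism because $\cat B$ is regular, so by the criterion it remains to produce a $\phi$-computable tracker for the morphism $Z\to q_*(Q)$ with underlying map $\id_{\uo Z}$. Starting from a realizer of $z\of Z$, a tracker of $g$ yields a realizer of $\uo g(z)\of Y$; since $e$ is a regular epimorphism $\rr Y$ is $\im{(\id_A\times\uo e)}(\rr X)$, so that realizer witnesses some $x\of\uo X$ with $\uo e(x)=\uo g(z)$ together with a realizer of $x$ in $X$; the unique element of $P$ lying over $(x,z)$ has $p$-image $x$ and $q$-image $z$, and pairing the realizer of $x$ with the given realizer of $z$ --- using $\db{\lambda x,y.x}$ and $\db{\lambda x,y.y}$ --- yields a realizer of that element in $Q$, the whole process being one $\phi$-computable operation. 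Hence $q\colon Q\to Z$ is a regular epimorphism, regular epimorphisms in $\Asm(\cat B,(A,\cdot),\phi)$ are pullback-stable, the category is regular, and $\uo$ is regular by the first and third parts. Finally, for a regular epimorphism $e\colon X\to Y$ of $\cat B$, $\uo\nabla e=e$ is a regular epimorphism and $(\nabla e)_*(\nabla X)$ carries the realizability relation $\im{(\id_A\times e)}(A\times X)$; since $\id_A\times e$ is a pullback of $e$ it is again a regular epimorphism, so its direct image of the top subobject is the top subobject $A\times Y=\rr{\nabla Y}$, whence the criterion makes $\nabla e$ a regular epimorphism and $\nabla$ regular as well.

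The step I expect to fight with is the last one: keeping each ``$\phi$-total iff $\phi$-total'' equivalence of Definition~\ref{hulpfuncties} pointing the right way, and in particular assembling the single $\phi$-computable tracker that witnesses that the pullback projection of a regular epimorphism is again ``pushforward-saturated''. The cocone-lifting in the regular-epimorphism paragraph is the other delicate spot; the rest is bookkeeping.
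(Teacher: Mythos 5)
Your proposal is correct and follows essentially the same route as the paper's proof: the same $\otimes$/$(-)^*$ construction of finite limits, the same $e_*$ treatment of coequalizers of kernel pairs, and the same adjunction. The only divergence is pullback-stability of regular epimorphisms, which the paper settles abstractly via the Beck--Chevalley identity $e'_*((e')^*(Y)\otimes_W (f')^*(X)) = Y\otimes_{\uo Y} f^*(e_*(X))\simeq Y$ while you chase realizers explicitly; just note that ``$\rr Y$ is $\im{(\id_A\times \uo e)}(\rr X)$'' should read ``$Y$ and $e_*(X)$ are isomorphic over $\id_{\uo Y}$'', so your computable chain needs one further application of the tracker of $Y\to e_*(X)$, which the criterion for regular epimorphisms you established already provides.
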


\begin{proof} That $\uo$ and $\nabla$ are an adjoint pair of functors is easy to see. Any $\phi$-total morphism $X\to Y$ is a morphism $\uo X\to \uo Y$, any morphism $\uo X\to Z$ is a $\phi$-total morphism $X\to \nabla Z$, so $\cat B(\uo X,Z) = \Asm(\cat B,(A,\cdot),\phi)(X,\nabla Z)$. Because $\uo \nabla W = W$, every morphism $W\to Z$ is $\phi$-total $\nabla W\to \nabla Z$. 

If $1$ is terminal in $\cat B$, then $\nabla 1$ is in $\Asm(\cat B,(A,\cdot),\phi)$ because $\nabla$ preserves limits. For other finite limits let $f:X\to Z$ and $g:Y\to Z$ in $\Asm(\cat B,(A,\cdot),\phi)$. Suppose that $f':W \to \uo Y$ and $g':W\to \uo X$ are pullbacks of $f$ and $g$ along each other. Now let $W'= (f')^*(Y)\otimes_W (g')^*(X)$. For any pair $h:V\to X$ and $k: V\to Y$ such that $f\circ h = g\circ k$, there is a unique $l:\uo V\to W$ such that $h=g'\circ l$ and $k=f'\circ l$ in $\cat B$. This $l$ is a $\phi$-total morphism $V\to W'$, because it is $\phi$-total as a morphism $V\to(f')^*(Y)$ and as a morphism $V\to(g')^*(X)$ and because $W'$ is a pullback of $(f')^*(Y)\to \nabla W$ and $(g')^*(X)\to \nabla W$.

\[\xymatrix{
& V \ar@/^/[drr]^{h}\ar@/^/[ddr]^{k} \ar@{.>}[d]^l\ar@{.>}@/_/[ddl]_l \ar@{.>}@/_/[dl]_l \\
W' \ar[r]\ar[d]\ar@{}[dr]|<\lrcorner & (g')^*(X)\ar[d] \ar[rr]_{g'} && X\ar[d]^f \\ 
(f')^*(Y) \ar@/_/[rr]_{f'} \ar[r] & \nabla W & Y\ar[r]_g & Z 
}\]

This demonstrates that  $\Asm(\cat B,(A,\cdot),\phi)$ has finite limits. The functor $\nabla$ preserves these limits because it is a right adjoint to $\uo$. The construction of finite limits above show that limits in $\Asm(\cat B,(A,\cdot),\phi)$ lie direct above limits in $\cat B$. Hence $\uo$ preserves finite limits too.

Concerning regular epimorphisms: let $f,g:X\to Y$ be a parallel pair of $\phi$-total arrows which have a coequalizer $e:\uo Y\to Z$ in $\cat B$. Since $e$ is a regular epimorphism, $e_*(Z)$ is an assembly. Suppose $h:Y\to W$ satisfies $h\circ f = h\circ g$. Then there is a $k: Z\to \uo W$ such that $h = k\circ e$ and $k:e_*(Z) \to W$ is $\phi$-total because $h$ is.

If $f:X\to Y$ is any morphism in $\cat B$ then $\subob{y\of Y|\exists x\of X.f(x)=y}$ is the coequalizer of its kernel pair. Since every kernel pair in $\Asm(\cat B,(A,\cdot),\phi)$ lies above a kernel pair in $\cat B$, $\cat B$ has coequalizers of kernel pairs too.

This time $\uo$ preserves regular epimorphisms because it is left adjoint to $\nabla$. For each regular epimorphism $e:X\to Y$ in $\cat B$, $e_*(\nabla X) = \nabla Y$ and therefore $\nabla$ also preserves regular epimorphisms.

Regular epimorphisms are stable under pullback in $\Asm(\cat B,(A,\cdot),\phi)$ for the following reasons. Suppose $e:X\to Z$ and $f:Y\to Z$ are $\phi$-total morphisms and $e$ is regularly epic. Let $e':W\to \uo Y$ and $f':W\to \uo X$ be pullbacks of $e$ and $f$ along each other. The pullback of $e$ along $f$ in $\Asm(\cat B,(A,\cdot),\phi)$ then is $e':(e')^*(X)\otimes_W (f')(Y) \to Y$. The following shows that this $e'$ is a regular epimorphism because $e'_*((e')^*(Y)\otimes_W (f')^*(X))$ is isomorphic to $Y$. One can verify the following equations by raw computation
\begin{align*}
e'_*((e')^*(Y)\otimes_W (f')^*(X)) &= Y\otimes_{\uo Y} (e')_*(f')^*(X) \\
& = Y\otimes_{\uo Y} f^*(e_*(X))
\end{align*}
The $\phi$-total morphism $\id_{\uo Z}: e_*(X)\to Z$ is an isomorphism because $e:X\to Z$ is a coequalizer for the same kernel pair as $e:X\to e_*(X)$. The operator $f^*$ preserves this isomorphism, so $f^*(e_*(X))$ and $f^*(Z)$ are isomorphic. The morphism $\id_{\uo Y}: Y \to f^*(Z)$ is $\phi$-total because $f\circ\id_Y:Y\to Z$ is. Therefore $Y\otimes_{\uo Y} f^*(Z)$ is isomorphic to $Y$. Because $e'_*((e')^*(Y)\otimes_W (f')^*(X))$ is isomorphic to $Y$, $e'$ is a regular epimorphism. By generalization all regular epimorphism are stable under pullback.
\end{proof}

\subsection{Soundness}
This subsection holds the proof that the category of assemblies is a Heyting category. This means that intuitionistic logic is sound for realizability interpretations which are connected to categories of assemblies. To complete the proof that $\Asm(\cat B,(A,\cdot),\phi)$ is a Heyting category, I first demonstrate the equivalence of the subobject lattices in categories of assemblies with simpler preordered sets.

\newcommand\rleq{\mid\joinrel\stackrel{\mathbf r}{=}}
\begin{defin}
For each object $X$ of $\cat B$ and $U,V\in \sub(A\times X)$, let 
\[ h_X(U,V) = \subob{a\of R| \forall (b,x)\of U. a\cdot b\converges, (a\cdot b,x)\of V}\]
Let $U\rleq V$ if $h_X(U,V)\in \phi$. 
\end{defin}

Note that $\rleq$ is a family of binary relations.
The following establishes a family of equivalences between initial segments of the posets $(\sub(A\times X,\rleq)$ and subobjects posets of $\Asm(\cat B,(A,\cdot),\phi)$.

\hide{Maak een inverse functor. Dat is alles wat we hoeven te doen. }

\begin{defin} Subobjects are isomorphism classes of monomorphisms by definition \ref{heytcat}. For each subobject $U$ let $E_X:\sub(X) \to \sub(A\times \uo X)$ satisfy
\[ E_X(U) = \im{(\id_A\times \mu_U)}(\rr {X_U})\in \sub(A\times\uo X) \]
Here $\mu_U:X_U\to X$ is the monomorphism in $U$ chosen in definition \ref{promonos}.
\end{defin}

\begin{lemma} The map $E_X$ determines an equivalence between $\sub(X)$ and the initial segment $[\emptyset,\rr X]$ of $(\sub(A\times\uo X),\rleq)$. \label{insegequiv} \end{lemma}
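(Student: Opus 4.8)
The plan is to build an explicit inverse $D_X$ of $E_X$. For $V\in[\emptyset,\rr X]$ I would let $D_X(V)$ be the subobject of $X$ in $\Asm(\cat B,(A,\cdot),\phi)$ represented by the following monomorphism: choose a mono $m:S\to\uo X$ in $\cat B$ representing $\subob{x\of\uo X|\exists a\of A.(a,x)\of V}\in\sub(\uo X)$, set $X_V=(S,\pre{(\id_A\times m)}(V))$ — this is an $A$-assembly precisely because $S$ is the image of $V$ — and note that $m$ underlies a $\phi$-total morphism $X_V\to X$ tracked by $h_X(V,\rr X)\in\phi$, hence a monomorphism of $\Asm(\cat B,(A,\cdot),\phi)$ as its underlying map is monic. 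Before comparing $E_X$ and $D_X$ I would record two preliminaries. First: every monomorphism of $\Asm(\cat B,(A,\cdot),\phi)$ has monic underlying map, since if $\uo f$ had a nontrivial kernel pair $p,q:K\to\uo X$ in $\cat B$, then the $\otimes$-construction of Definition \ref{hulpfuncties} applied to $p^*(X)$ and $q^*(X)$ would yield a nontrivial parallel pair $\tilde p,\tilde q:p^*(X)\otimes_K q^*(X)\to X$ with $f\tilde p=f\tilde q$. Second: for any mono $m:X'\to X$ with $\uo m$ monic, $\pre{(\id_A\times\uo m)}\circ\im{(\id_A\times\uo m)}$ is the identity on $\sub(A\times\uo{X'})$, and $\im{\pi}\circ\im{(\id_A\times\uo m)}(\rr{X'})$ is the subobject $\uo{X'}\hookrightarrow\uo X$ — the latter because $\rr{X'}$ is an assembly relation.

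Next I would check $D_X\circ E_X=\mathrm{id}$ and $E_X\circ D_X\cong\mathrm{id}$. For the first, since $\mu_U:X_U\to X$ is a monomorphism with monic underlying map, the preliminaries give $\im{\pi}(E_X(U))=[\uo\mu_U]$ and $\pre{(\id_A\times\uo\mu_U)}(E_X(U))=\rr{X_U}$, so $D_X(E_X(U))$ is represented by $\uo\mu_U:X_U\to X$ itself and therefore equals $U$. For the second, I would first verify that $E_X(U)$ is independent, up to $\rleq$-equivalence, of the chosen representative: for any mono $m':X'\to X$ representing $U$ one has $E_X(U)\rleq\im{(\id_A\times\uo m')}(\rr{X'})\rleq E_X(U)$, because the $\phi$-total isomorphism $X_U\cong X'$ over $X$ supplies, through the tracking condition, members of $\phi$ inside the relevant $h_X$'s. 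Applying this with $m'=m:X_V\to X$ gives $E_X(D_X(V))\cong\im{(\id_A\times m)}(\rr{X_V})=\im{(\id_A\times m)}\pre{(\id_A\times m)}(V)=V$, the last equality because $\id_A\times m$ is monic and $V$ already lies in $A\times\subob{x\of\uo X|\exists a\of A.(a,x)\of V}$.

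Finally I would establish monotonicity of both maps. If $U\subseteq U'$ in $\sub(X)$, the comparison monomorphism $X_U\to X_{U'}$ over $X$ is $\phi$-total and any tracker for it lies in $h_X(E_X(U),E_X(U'))$, so $E_X(U)\rleq E_X(U')$. Conversely, if $V\rleq V'$ with $V,V'\in[\emptyset,\rr X]$, then $\subob{x\of\uo X|\exists a\of A.(a,x)\of V}\subseteq\subob{x\of\uo X|\exists a\of A.(a,x)\of V'}$, so there is a map over $\uo X$ between the underlying objects; it is $\phi$-total as a morphism $X_V\to X_{V'}$ because $h_{\uo X}(V,V')\in\phi$ tracks it, whence $D_X(V)\subseteq D_X(V')$. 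Together with the two identities above, this exhibits $E_X$ and $D_X$ as mutually inverse monotone maps, i.e.\ an equivalence of $\sub(X)$ with $[\emptyset,\rr X]$.

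I expect the genuine obstacle to be bookkeeping rather than ideas: one must keep the difference between a subobject and the monomorphism chosen to represent it under control, since both $E_X$ and $D_X$ are defined through such choices, and the whole argument hinges on the not-entirely-obvious fact that monomorphisms of $\Asm(\cat B,(A,\cdot),\phi)$ have monic underlying maps, without which $\rr{X_U}$ cannot be recovered from $E_X(U)$. The steps that carry the realizability content — turning an inclusion $h_X(U,V)\in\phi$ into an honest $\phi$-total morphism and back — are short once the set-up is arranged, since they only unwind the definition of tracking in two directions.
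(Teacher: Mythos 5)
Your proof is correct and matches the paper's argument in substance: your explicit inverse $D_X$ is exactly the construction the paper uses for essential surjectivity, and your monotonicity and tracking computations are the paper's functoriality and fullness steps, merely repackaged as an explicit pseudo-inverse rather than as full, faithful and essentially surjective plus choice. The one auxiliary fact you prove by hand --- that monomorphisms of $\Asm(\cat B,(A,\cdot),\phi)$ have monic underlying maps --- the paper gets directly from $\uo$ being a regular, hence finite-limit-preserving, functor.
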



\begin{proof}
Suppose that $U, V\in \sub(X)$ and $U\subseteq V$. There is a unique monic $p:X_U\to X_V$ such that $\mu_V\circ p = \mu_U$ by the definition of $\sub$. If $R\in \phi$ tracks $p$, then $R\subseteq h_X(E_X(U),E_X(V))$ and therefore $h_X(E_X(U),E_X(V))\in \phi$. Hence $E_X$ is a functor. By definition $E_X(X) = \rr X$ and since $X$ is terminal, $E_X:\sub(X)\to[\emptyset,\rr X]$.

Suppose $U,V\in \sub(X)$ satisfy $E_X(U)\rleq E_X(V)$. Now $h_X(E_X(U),E_X(V))$ is inhabited and therefore
\[ \cat B\models \forall y\of\uo X_U. \exists z\of\uo X_V. \mu_U(y)=\mu_V(z) \]
This means that the pullback $n:W\to X_U$ of $\mu_V$ along $\mu_U$ is a regular epimorphism. Because $\uo$ preserves finite limits, it preserves pullbacks and monomorphisms. Hence $n$ is an isomorphism. If $m$ is the pullback of $\mu_U$ along $\mu_V$ and $p = m\circ (n)^{-1}$ then $\mu_V\circ p = \mu_V$ as required. This proves that $E_X$ is full and faithful.
\[\xymatrix{
W \ar[r]^m \ar[d]^n \ar@{}[dr]|<\lrcorner & X_V \ar[d]^{\mu_V} \\
X_U \ar[r]_{\mu_U} \ar@/^/@{.>}[u]^{\pre n} & X
}\]

For each $\upsilon\rleq \rr X$ let $U = \subob{x\in \uo X|\exists a\of A.\upsilon(a,x)}$ and let $\mu_U: \uo X_U \to \uo X$ be the monomorphism that represents it (as in definition \ref{promonos}). The pair $(U,\pre{(\id_A\times \mu_U)}(\upsilon))$ is an assembly and $\mu_U:(U,\pre{(\id_A\times \mu_U)}(\upsilon))\to X$ is a $\phi$-total monomorphism. There is a $V\in \sub X$ such that $\mu_V$ is isomorphic to $\mu_U$ as monomorphism in $\Asm(\cat B,(A,\cdot),\phi))$. Hence $E_X$ is essentially surjective.
By the axiom of choice, $E_X$ has a weak inverse functor and is an equivalence of categories.
\end{proof}

\begin{lemma} For each object $X$ in $\Asm(\cat B,(A,\cdot),\phi)$, $\sub(X)$ is a Heyting algebra. \end{lemma}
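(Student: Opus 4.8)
The plan is to transport the statement to the realizer side by means of Lemma~\ref{insegequiv}. That lemma gives an equivalence of posets between $\sub(X)$ and the initial segment $[\emptyset,\rr X]$ of the preordered set $(\sub(A\times\uo X),\rleq)$, so it suffices to equip this segment with the structure of a Heyting algebra. Since $\rleq$ is a preorder (reflexivity and transitivity come from the identity term and from composing trackers, which stay in $\phi$ by Lemma~\ref{computableterms}), and since a bounded lattice in which each meet operation $\upsilon\sqcap(-)$ has a right adjoint is by definition a Heyting algebra — distributivity being a purely formal consequence — all I need is to produce, for $\upsilon,\omega$ in the segment, a bottom, a top, a binary meet, a binary join and a right adjoint to meet, all again lying in $[\emptyset,\rr X]$, together with members of the relevant $h_X(-,-)$ lying in $\phi$. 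The top is $\rr X$, the maximum of the segment by definition, and the bottom is $\emptyset$, since $h_X(\emptyset,V)$ is all of $A\in\phi$ vacuously.

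For the meet I take $\upsilon\sqcap\omega$ defined by the very formula used for $Y\otimes_X Y'$ in Definition~\ref{hulpfuncties}: its realizers over $x$ are the $a$ with $a\cdot k$ a realizer of $(x)\of\upsilon$ for every $k\of\db{\lambda x,y.x}$ and $a\cdot l$ a realizer of $(x)\of\omega$ for every $l\of\db{\lambda x,y.y}$. Then $\db{\lambda a.a(\lambda x,y.x)}$ and $\db{\lambda a.a(\lambda x,y.y)}$ witness $\upsilon\sqcap\omega\rleq\upsilon$ and $\upsilon\sqcap\omega\rleq\omega$, so by transitivity $\upsilon\sqcap\omega\rleq\rr X$ and it lies in the segment; and if $p,q\in\phi$ track $\sigma\rleq\upsilon$ and $\sigma\rleq\omega$, then $\db{\lambda a,t.\,t(pa)(qa)}$ tracks $\sigma\rleq\upsilon\sqcap\omega$. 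Each displayed term is a closed abstraction whose constants are in $\phi$, hence lies in $\phi$ by Lemma~\ref{computableterms}.

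For the join I let $\upsilon\sqcup\omega$ have as realizers over $x$ the tagged pairs $\langle c,b\rangle$ (for a fixed $\lambda$-definable pairing with $\lambda$-definable projections) with $c\of\db{\lambda x,y.x}$ and $b$ a realizer of $(x)\of\upsilon$, or $c\of\db{\lambda x,y.y}$ and $b$ a realizer of $(x)\of\omega$; the coprojections are tracked by the evident ``tag and pair'' terms. For the elimination — the copairing of trackers $p\in\phi$ of $\upsilon\rleq\sigma$ and $q\in\phi$ of $\omega\rleq\sigma$, and the special case $\sigma=\rr X$ showing $\upsilon\sqcup\omega\rleq\rr X$ — one reads off the tag and applies the appropriate tracker to the payload; the one genuine subtlety is that application is strict, so a term that reads ``apply the tag of $c$ to $p$(payload) and $q$(payload)'' fails because the discarded branch would have to converge. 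Instead one $\eta$-delays the branches, passing the thunks $\lambda z.p(\text{payload})$ and $\lambda z.q(\text{payload})$ — which are total closed terms — to the Church boolean and forcing only the selected one. Again Lemma~\ref{computableterms} puts the resulting closed abstraction in $\phi$.

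The relative pseudocomplement is the main difficulty, because the obvious candidate does not stay inside the segment: if one sets $\upsilon\Rightarrow\omega=\subob{(a,x)\of A\times\uo X\mid\forall b\of A.(b,x)\of\upsilon\to a\cdot b\converges\land(a\cdot b,x)\of\omega}$, then over any $x$ outside the underlying object of $\upsilon$ the condition is vacuous, so the fibre is all of $A$, and no morphism in $\phi$ can turn an arbitrary element of $A$ into a realizer of $(x)\of\rr X$; hence this subobject is in general not $\rleq\rr X$. The remedy is to carry an $\rr X$-realizer alongside the functional one: let $\upsilon\Rightarrow\omega$ have as realizers over $x$ the pairs $\langle a_0,a_1\rangle$ with $(a_0,x)\of\rr X$ and $\forall b\of A.(b,x)\of\upsilon\to a_1\cdot b\converges\land(a_1\cdot b,x)\of\omega$. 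Then the first-projection term tracks $\upsilon\Rightarrow\omega\rleq\rr X$, so it is in the segment. For the adjunction $\sigma\sqcap\upsilon\rleq\omega$ iff $\sigma\rleq(\upsilon\Rightarrow\omega)$: from a tracker $r\in\phi$ of the right-hand inequality, unpack a meet-realizer into its $\sigma$- and $\upsilon$-parts, push the $\sigma$-part through $r$, take the second projection and apply it to the $\upsilon$-part; conversely, given a tracker $s\in\phi$ of $\sigma\sqcap\upsilon\rleq\omega$ together with a tracker $r_\sigma\in\phi$ of $\sigma\rleq\rr X$ — available precisely because $\sigma$ lies in the segment — the term $\db{\lambda a.\langle r_\sigma a,\ \lambda b.\,s(\lambda k.kab)\rangle}$ tracks $\sigma\rleq(\upsilon\Rightarrow\omega)$, using that $\lambda k.kab$ is the canonical meet-realizer assembled from a $\sigma$-realizer $a$ and a $\upsilon$-realizer $b$. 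As before, Lemma~\ref{computableterms} supplies the memberships in $\phi$. With bottom, top, meet, join and a right adjoint to each meet all accounted for, $[\emptyset,\rr X]$, and hence $\sub(X)$, is a Heyting algebra.
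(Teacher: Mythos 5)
Your proof is correct and follows the paper's strategy: transport along the equivalence of Lemma \ref{insegequiv} and build bottom, top, $\otimes$, $\oplus$ and implication on $[\emptyset,\rr X]$ out of closed $\lambda$-terms with constants in $\phi$, invoking Lemma \ref{computableterms} for the trackers. The one place you genuinely diverge is the implication: the paper takes the bare $d(\upsilon,\omega)=\subob{(a,x)\mid \forall (b,x)\of\upsilon.\,a\cdot b\converges\land(a\cdot b,x)\of\omega}$ and verifies only the adjunction $\sigma\otimes\upsilon\rleq\omega\iff\sigma\rleq d(\upsilon,\omega)$, leaving implicit the point you raise — that $d(\upsilon,\omega)$ need not itself satisfy $d(\upsilon,\omega)\rleq\rr X$, since its fibre is all of $A$ over points not in the support of $\upsilon$. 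Your pairing with an $\rr X$-realizer (in effect replacing $d(\upsilon,\omega)$ by its meet with $\rr X$, which has the same adjunction property for $\sigma$ in the segment) repairs this and keeps the implication honestly inside $[\emptyset,\rr X]$; this is a refinement rather than a different method. Your disjunction encoding also differs cosmetically: the paper tags by pre-applying the payload to $\comb i_0=\lambda x,f,g.fx$ or $\comb i_1$, so the eliminator $\lambda x.x\,h_X(\upsilon,\sigma)\,h_X(\omega,\sigma)$ only ever forces the selected branch and no thunking is needed, whereas your Church-boolean-plus-payload pairs require the $\eta$-delay you describe; both are sound.
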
 

\begin{proof} Due to the equivalence $E_X$, $\sub(X)$ is a Heyting algebra if $[\emptyset,\rr X]$ is a biCartesian closed preordered set. I prove the latter statement.

It is easy to see that $[\emptyset, \rr X]$ has a top and bottom element.

Because $\Asm(\cat B,(A,\cdot),\phi)$ is regular, $\sub(X)$ has binary meets. I give a proof that binary meets exist in $[\emptyset, \rr X]$ because it helps to define Heyting implication later on. Remember that the interpretations of many \la-terms are members of $\phi$ by lemma \ref{computableterms}. One of those is the following \[\comb p = \lambda x,y,p.pxy\]
For each pair $U,V\in \sub(A\times \uo X)$ let:
\[ U\otimes V = \subob{((p\cdot a)\cdot b,x)\of A\times \uo X| p\of\db{\comb p},(a,x)\of U,(b,x)\of V }\]
This is a meet because for all $U,V,W\in \sub(A\times X)$:
\begin{align*}
\db{\lambda x.x(\lambda y,z.y)} &\subseteq h_X(U\otimes V, U) \\
\db{\lambda x.x(\lambda y,z.z)} &\subseteq h_X(U\otimes V, V) \\
\db{\lambda x.\comb p(h_X(U,W)x)(h_X(U,V)x)} &\subseteq h_X(U,V\otimes W)
\end{align*}
Hence $U\otimes V\rleq U$, $U\otimes V\rleq V$ and if $U\rleq V$ and $W$, then $U\rleq V\otimes W$.

The following \la-terms help to define binary joins.
\[ \comb i_0 = \lambda x,f,g. fx \quad \comb i_1 =\lambda x,f,g. gx \]
For each pair $U,V\in \sub(A\times \uo X)$ let:
\[ U\oplus V = \subob{(l\cdot a,x)\of A\times \uo X| l\of \db{\comb i_0}, (a,x)\of U}\cup\subob{(r\cdot a,x)\of A\times X| r\of \db{\comb i_1}, (a,x)\of V}\]
This is a join because for all $U,V,W\in \sub(A\times X)$:
\begin{align*}
\db{\comb i_0} &\subseteq h_X(U, U\oplus V)\\
\db{\comb i_1} &\subseteq h_X(U, U\oplus V)\\
\db{\lambda x.xh_X(U,W)h_X(V,W)} &\subseteq h_X(U\oplus V,W)
\end{align*}
Hence $U\rleq U\oplus V$ and $V\rleq U\oplus V$ and if $U\rleq W$ and $V\rleq W$, then $U\oplus V\rleq W$.

In order to get Heyting implication for each pair $U,V\in \sub(A\times \uo X)$ let:
\[ d(U,V) = \subob{(a,x)\of A\times \uo X| \forall (b,x)\of U.a\cdot b\converges \land (a\cdot b,x)\of V} \]
This is a Heyting implication because for all $U,V,W\in \sub(A\times X)$
\begin{align*}
\db{\lambda x.xh_X(U,d(V,W))} &\subseteq h_X(U\otimes V,W)\\
\db{\lambda x,y. h_X(U\otimes V,W)((\comb px)y)} &\subseteq h_X(U,d(V,W))
\end{align*}
Hence $U\rleq d(V,W)$ if and only if $U\otimes V\rleq W$.

All the required structure is present in $[\emptyset,\rr X]$. Therefore $\sub(X)$ is a Heyting algebra.
\end{proof}

The next lemma says that the inverse image map of any $\phi$-total morphism $f:X\to Y$ has a nice representation as monotone map $[\emptyset, \rr Y]\to [\emptyset, \rr X]$.

\begin{lemma} For each morphism $f:X\to Y$ of $\Asm(\cat B,(A,\cdot),\phi)$ there is a natural isomorphism between the functors $E_X\circ \pre f$ and $\pre{(\id_A\times f)}\circ E_Y$ going from $\sub(Y)$ to $[\emptyset, \rr X]$.\end{lemma}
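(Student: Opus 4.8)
The plan is to evaluate both functors at an arbitrary $U\in\sub(Y)$ and compare them there. Because the common codomain $[\emptyset,\rr X]$ is a preordered set, every naturality square in it commutes automatically and a natural transformation is just a pointwise comparison; so a natural isomorphism exists as soon as $E_X(\pre f(U))$ and $\pre{(\id_A\times f)}(E_Y(U))$ are $\rleq$-equivalent for each $U$, i.e.\ as soon as $h_X$ of each of the two ordered pairs lies in $\phi$. Both of the required realizers will be interpretations of closed $\lambda$-terms, hence members of $\phi$ by lemma \ref{computableterms}.

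To unwind the left-hand side, fix the chosen monomorphism $\mu_U:Y_U\to Y$. By the construction of finite limits in the proof of theorem \ref{basis}, the pullback of $\mu_U$ along $f$ in $\Asm(\cat B,(A,\cdot),\phi)$ has as underlying object the pullback $W$ in $\cat B$ of $\uo\mu_U$ along $\uo f$, with projections $p:W\to\uo Y_U$ and $q:W\to\uo X$, the latter monic because $\uo\mu_U$ is; and $\pre f(U)$ is the subobject of $X$ represented by the $\phi$-total monomorphism $q:p^*(Y_U)\otimes_W q^*(X)\to X$. An isomorphism of assemblies is $\phi$-total together with its inverse, so $E_X$ sends any two monomorphisms representing the same subobject to $\rleq$-equivalent relations; hence $E_X(\pre f(U))$ is $\rleq$-equivalent to $\im{(\id_A\times q)}$ of the realizability relation of $p^*(Y_U)\otimes_W q^*(X)$, that is, of the tensor (definition \ref{hulpfuncties}) of $\pre{(\id_A\times p)}(\rr{Y_U})$ with $\pre{(\id_A\times q)}(\rr X)$.

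For the right-hand side, apply the functor $A\times-$ to the same pullback square to obtain a pullback square in $\cat B$ and feed it to the Beck--Chevalley condition: this rewrites $\pre{(\id_A\times f)}(E_Y(U))=\pre{(\id_A\times\uo f)}\,\im{(\id_A\times\uo\mu_U)}(\rr{Y_U})$ as $\im{(\id_A\times q)}\,\pre{(\id_A\times p)}(\rr{Y_U})$, i.e.\ the image along $\id_A\times q$ of the realizability relation of $p^*(Y_U)$, together with whatever restriction to $\rr X$ the passage to $[\emptyset,\rr X]$ prescribes. So both sides are now $\im{(\id_A\times q)}$ of a relation over $W$: on the left the tensor with $\pre{(\id_A\times q)}(\rr X)$, on the right the bare relation $\pre{(\id_A\times p)}(\rr{Y_U})$. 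The inequality $E_X(\pre f(U))\rleq\pre{(\id_A\times f)}(E_Y(U))$ is realized by $\lambda a.a(\lambda x,y.x)$, which sends a realizer of a point of $W$ in the tensor to its first component, a realizer of the corresponding point of $p^*(Y_U)$; the reverse inequality is realized by a pairing term of the shape $\lambda z.\comb p\,\bigl(z(\lambda x,y.x)\bigr)\bigl(z(\lambda x,y.y)\bigr)$, which takes the realizer supplied by $\pre{(\id_A\times p)}(\rr{Y_U})$ together with the realizer supplied by $\rr X$ — available precisely because we work inside $[\emptyset,\rr X]$ — and packs them into a realizer for the tensor.

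The step I expect to require the most care is this last matching. One must check that $\im{(\id_A\times q)}$ interacts with the tensor and with pullback along $\id_A\times q$ in the way the combinators demand: since $q$ is monic, $\im{(\id_A\times q)}$ only relabels a relation along a subobject, and the tensor factor $q^*(X)$ in the $\Asm$-pullback is exactly what, after this relabelling, supplies the restriction to $\rr X$ distinguishing $[\emptyset,\rr X]$ from all of $\sub(A\times\uo X)$; without it the bare $\cat B$-inverse image of $E_Y(U)$ need not even lie in $[\emptyset,\rr X]$. Once the two relations over $W$ are written out in the internal language, the remaining identities are the same kind of routine $\lambda$-term manipulations already carried out in the proof that $\sub(X)$ is a Heyting algebra, so no new ingredient is needed beyond lemma \ref{computableterms} and Beck--Chevalley.
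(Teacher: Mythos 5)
Your argument is sound, but it takes a genuinely different route from the paper's, and the difference is instructive. The paper evaluates both composites at $U$, forms the pullback square of $\mu_U$ along $f$, asserts that $g^*(Y_U)\simeq X_{\pre f(U)}$ on the nose, and finishes with a single application of Beck--Chevalley; no realizers appear. You instead take the pullback assembly to be the tensor $(f')^*(Y_U)\otimes_W(g')^*(X)$ supplied by the finite-limit construction of theorem \ref{basis}, apply Beck--Chevalley to the first tensor factor only, and then discharge the second factor with explicit $\lambda$-terms, observing that the $\rr X$-realizer it carries is exactly what is needed to land in $[\emptyset,\rr X]$ and is recoverable there. Your version is the more careful one: the identification $g^*(Y_U)\simeq X_{\pre f(U)}$ presupposes that the inclusion $g^*(Y_U)\to X$ is $\phi$-total, which is not automatic (a realizer of $g(w)$ in $Y_U$ does not in general compute a realizer of $w$ in $X$), and correspondingly the bare inverse image $\pre{(\id_A\times \uo f)}(E_Y(U))$ need not lie in $[\emptyset,\rr X]$ at all --- take $f=\id_{\uo X}:X\to\nabla\uo X$ and $U=\top$, where the left-hand side is $\rr X$ but the uncorrected right-hand side is $A\times\uo X$. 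Your explicit $\otimes\,\rr X$ correction is precisely what makes the stated codomain legitimate, at the price of the routine combinator manipulations you describe; what remains to be written out is only the Frobenius-style commutation of $\im{(\id_A\times q)}$ (for $q$ monic) with the tensor, which is the ``careful step'' you already flag.
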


\begin{proof} Naturalness is the easy part of this lemma, because $[\emptyset, \rr X]$ is a preordered set and every square in it is commutative. That leaves the problem of finding the isomorphisms.

For each $U\in \sub (Y)$, let $\mu_U:Y_U \to Y$ be its representation. 
The following equations hold by definition.
\begin{align*}
E_X\circ \pre f(U) &= \im{(\id_A\times \mu_{\pre f(U)})}(\rr{ X_{\pre f(U)}}) \\
\pre{(\id_A\times f)}\circ E_Y(U) &= \pre{(\id_A\times f)}\circ\im{(\id_A\times \mu_U)}(\rr{Y_U})
\end{align*}

There is a unique morphism $g: X_{\pre f(U)} \to Y_U$ such that the morphism $f$, $\mu_{\pre f(U)}$, $\mu_U$ and $g$ form a pullback square.
\[ \xymatrix{
X_{\pre f(U)} \ar@{.>}[r]^g\ar[d]_{\mu_{\pre f(U)}} \ar@{}[dr]|<\lrcorner & Y_U \ar[d]^{\mu_U} \\
X \ar[r]_f & Y
}\]
The universal property of the $g^*$ construction (see definition \ref{hulpfuncties}) makes $g^*(Y_U)$ another assembly over $\uo Y_U$ for which this square is a pullback. Hence $g^*(Y_U) \simeq X_{\pre f(U)}$ and
\[ E_X\circ \pre f(U) = \im{(\id_A\times \mu_{\pre f(U)})}(\rr{X_{\pre f(U)}}) \simeq \im{(\id_A\times \mu_{\pre f(U)})}\circ \pre{(\id_A\times g)}(\rr{Y_U}) \]

The Beck-Chevalley condition in $\cat B$ implies:
\[ \im{(\id_A\times \mu_{\pre f(U)})}\circ \pre{(\id_A\times g)} = \pre{(\id_A\times f)} \circ \im{(\id_A\times \mu_U)} \]
Therefore
\begin{align*}
E_X\circ \pre f(U) &\simeq \im{(\id_A\times \mu_{\pre f(U)})}\circ \pre{(\id_A\times g)}(\rr{Y_U})\\
& = \pre{(\id_A\times f)} \circ \im{(\id_A\times \mu_U)}(\rr{Y_U}) \\
& = \pre{(\id_A\times f)}\circ E_Y(U)
\end{align*}

Since this construction works for arbitrary subobjects, the inverse image map $\pre f$ in $\Asm(\cat B,(A,\cdot),\phi)$ commutes up to isomorphism with $\pre{(\id_A\times f)}$ in $\cat B$.
\end{proof}

The following lemma completes the proof that $\Asm(\cat B,(A,\cdot),\phi)$ is a Heyting category.

\begin{lemma} Let $f:X\to Y$ be a morphism of $\Asm(\cat B,(A,\cdot),\phi)$. Its inverse image map $\pre f$ is a morphism of Heyting algebras, it has both adjoints $\im f\dashv \pre f\dashv \forall_f$ and these adjoints satisfy the Beck-Chevalley conditions. \end{lemma}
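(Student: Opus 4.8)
The plan is to transport the entire statement along the equivalences $E_X\colon\sub(X)\xrightarrow{\sim}[\emptyset,\rr X]$ and $E_Y\colon\sub(Y)\xrightarrow{\sim}[\emptyset,\rr Y]$ of the two previous lemmas. These identify $\sub(X),\sub(Y)$ with the initial segments $[\emptyset,\rr X],[\emptyset,\rr Y]$ of $(\sub(A\times\uo X),\rleq)$ and $(\sub(A\times\uo Y),\rleq)$ --- which carry meet $\otimes$, join $\oplus$, implication $d$, top $\rr X$ resp.\ $\rr Y$, bottom $\emptyset$ --- and they carry $\pre f$ to the monotone map $\bar f^{*}\colon\omega\mapsto\pre{(\id_A\times f)}(\omega)\otimes\rr X$ built from the inverse-image map $\pre{(\id_A\times f)}$ of $\cat B$, the meet with $\rr X$ being what keeps the value inside $[\emptyset,\rr X]$. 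It therefore suffices to prove the three assertions for $\bar f^{*}$. Two ingredients are already available: because $\Asm(\cat B,(A,\cdot),\phi)$ is regular (theorem \ref{basis}), lemma \ref{reglog} supplies a left adjoint $\im f\dashv\pre f$ obeying Beck--Chevalley --- and hence Frobenius reciprocity $\im f(u\cap\pre f v)=\im f(u)\cap v$ --- while $\pre f$ preserves $\cap$ and $\top$ automatically as a pullback functor on subobject semilattices.

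Granting a right adjoint $\forall_f$ (constructed below), the claim that $\pre f$ is a homomorphism of Heyting algebras becomes formal: a left adjoint between poset categories preserves colimits, so $\pre f$ additionally preserves $\cup$ and $\emptyset$; and for Heyting implication, $\pre f(d(u,v))\cap\pre f u=\pre f\big(d(u,v)\cap u\big)\leq\pre f v$ gives $\pre f(d(u,v))\leq d(\pre f u,\pre f v)$, while $\im f\big(d(\pre f u,\pre f v)\cap\pre f u\big)=\im f\big(d(\pre f u,\pre f v)\big)\cap u\leq\im f(\pre f v)\leq v$ --- using Frobenius and the counit --- yields, through $\im f\dashv\pre f$, the reverse inequality $d(\pre f u,\pre f v)\leq\pre f(d(u,v))$.

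The substantive step is the construction of $\forall_f$ with its Beck--Chevalley condition. On the initial-segment side I would set
\[
\forall_{\bar f}(\nu)\ =\ \rr Y\otimes\forall_{(\id_A\times f)}\big(d(\rr X,\nu)\big),
\]
where $\forall_{(\id_A\times f)}$ is the right adjoint to $\pre{(\id_A\times f)}$ in the Heyting category $\cat B$ and the leading $\rr Y\otimes(-)$ forces the result into $[\emptyset,\rr Y]$. The adjunction $\pre f\dashv\forall_f$ then unwinds to
\begin{align*}
\omega\rleq\forall_{\bar f}(\nu)
&\ \Longleftrightarrow\ \omega\rleq\forall_{(\id_A\times f)}(d(\rr X,\nu))
\ \Longleftrightarrow\ \pre{(\id_A\times f)}(\omega)\rleq d(\rr X,\nu)\\
&\ \Longleftrightarrow\ \pre{(\id_A\times f)}(\omega)\otimes\rr X\rleq\nu
\ \Longleftrightarrow\ \bar f^{*}(\omega)\rleq\nu ,
\end{align*}
the first step since $\omega\rleq\rr Y$ already and $\otimes$ is the meet, the third since $d$ is the $\rleq$-implication of $\sub(A\times\uo X)$, the last by the description of $\bar f^{*}$. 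Only the middle equivalence is not formal: it says that the adjunction $\pre{(\id_A\times f)}\dashv\forall_{(\id_A\times f)}$ of $\cat B$, which a priori holds for $\subseteq$, survives passage to the coarser order $\rleq$. I would settle it by checking the equality of subobjects of $A$
\begin{align*}
h_X\big(\pre{(\id_A\times f)}(\omega),\mu\big)
&= h_Y\big(\omega,\forall_{(\id_A\times f)}(\mu)\big)\\
&= \subob{a\of A\mid\forall b\of A,\ x\of\uo X.\ (b,f(x))\of\omega\to a\cdot b\converges\land(a\cdot b,x)\of\mu} ,
\end{align*}
which holds because $\forall_{(\id_A\times f)}(\mu)=\subob{(b,y)\mid\forall x.\ f(x)=y\to(b,x)\of\mu}$ in $\cat B$ and Beck--Chevalley merges the two nested universal quantifiers; one of these subobjects of $A$ lies in $\phi$ exactly when the other does.

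Finally, Beck--Chevalley for $\forall_f$: a pullback square in $\Asm(\cat B,(A,\cdot),\phi)$ lies over a pullback square in $\cat B$ since $\uo$ preserves pullbacks (theorem \ref{basis}), so after transport to the initial segments the identity $\forall_{g'}\circ\pre{(f')}=\pre f\circ\forall_{g}$ is a statement built from $\pre{(\id_A\times-)}$, $\forall_{(\id_A\times-)}$, $\otimes$ and $d(\rr{(-)},-)$, and it follows from Beck--Chevalley for $\forall_{(\id_A\times-)}$ in $\cat B$ (valid as $\id_A\times-$ preserves pullbacks) together with the fact that $\otimes$ and $d$ are given by internal formulas commuting with the inverse-image maps. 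I expect the main obstacle to be exactly the isolated point just above --- that the $\cat B$-adjunction descends from $\subseteq$ to $\rleq$ --- because everything else is either inherited from regularity of $\Asm(\cat B,(A,\cdot),\phi)$ or is the same $\lambda$-term bookkeeping already performed for the meets, joins and implication of $[\emptyset,\rr X]$.
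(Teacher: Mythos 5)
Your overall architecture is right, and you have correctly located the crux, but the verification you offer for that crux rests on a false equality. Unwinding the definitions, $h_Y\bigl(\omega,\forall_{(\id_A\times f)}(\mu)\bigr)$ is
\[ \subob{a\of A \mid \forall (b,y)\of\omega.\ a\cdot b\converges \land \forall x\of\uo X.\,(f(x)=y \to (a\cdot b,x)\of\mu)}, \]
which demands $a\cdot b\converges$ for \emph{every} realizer $(b,y)\of\omega$, including those whose $y$ lies outside the image of $f$; whereas $h_X\bigl(\pre{(\id_A\times f)}(\omega),\mu\bigr)$ constrains $a$ only on realizers of points of the form $f(x)$. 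So Beck--Chevalley does not ``merge the quantifiers'': only the inclusion $h_Y(\omega,\forall\mu)\subseteq h_X(\pre{(\id_A\times f)}\omega,\mu)$ holds, which yields the easy direction $\omega\rleq\forall(\mu)\Rightarrow\pre{(\id_A\times f)}(\omega)\rleq\mu$ but not the converse, and upward closure of $\phi$ runs the wrong way for the inclusion to help. (For a counterexample to the equality take $\uo X$ initial and $Y=\nabla 1$ over Kleene's first model: the left-hand side is all of $A$, the right-hand side is the set of total elements.)

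The converse direction is nevertheless true for the specific $\mu=d(\rr X,\nu)$ you substitute, and that is where the repair must go: given $a_0$ in $h_X(\pre{(\id_A\times f)}\omega,\,d(\rr X,\nu))$, replace it by a ``delayed'' code $e$ with $e\cdot b\converges$ for \emph{all} $b$ and $e\cdot b\cdot c = a_0\cdot b\cdot c$ wherever the latter is defined; such $e$ range over a member of $\phi$ by the first clause of definition \ref{computable} applied to the $\phi$-computable ternary map $(a,b,c)\mapsto (a\cdot b)\cdot c$. For $(b,y)\of\omega$ with $y$ outside the image of $f$ the membership $(e\cdot b,y)\of\forall_{(\id_A\times f)}(d(\rr X,\nu))$ is then vacuous, so $e\of h_Y(\omega,\forall(d(\rr X,\nu)))$. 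Equivalently, verify the adjunction $\pre{(\id_A\times f)}(-)\otimes\rr X\dashv\rr Y\otimes\forall_{(\id_A\times f)}(d(\rr X,-))$ directly by this currying argument instead of routing it through the $\subseteq$-adjunction of $\cat B$. Everything else in your write-up is sound --- the formal derivation of Heyting-morphism-ness from the two adjoints via Frobenius, and Beck--Chevalley for $\forall_f$ from Beck--Chevalley for $\im f$ --- and your two corrections (the $\otimes\rr X$ in $\bar f^{*}$ and the $d(\rr X,-)$ inside $\forall_{\bar f}$) are genuinely needed: the paper's own proof, which asserts that the plain $\subseteq$-adjunction between $\pre{(\id_A\times f)}$ and $\forall_{(\id_A\times f)}$ ``is preserved'' by $\rleq$, glosses over exactly the same convergence issue, and your formula for $\forall_f$ is the standard realizability one.
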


\begin{proof} If $\pre f$ preserves Heyting implication and has a right adjoint, then the rest of the properties hold for the following reasons. In every regular category, $\pre f$ has a left adjoint which satisfies the Beck-Chevalley condition. That the left adjoint satisfies the Beck-Chevalley condition implies that the right adjoint does too. A functor that has both adjoints preserves all limits including all finitary meets and joins of the Heyting algebra. The rest of this proof shows that $\pre{(\id_A\times f)}:[\emptyset, \rr Y] \to [\emptyset,\rr X]$ has a right adjoint and preserves implication, so that the inverse image map does too.

There already is an adjunction between $\pre{(\id_A\times f)}$ and $\forall_{\id_A\times f}$ relative to the inclusion orders $\subseteq$ on $\sub(A\times \uo X)$ and $\sub(A\times \uo Y)$. Because $\alpha\subseteq\beta$ implies $\alpha\rleq \beta$ on either side; because $\pre{(\id_A\times f)}$ preserves $\rleq$, this adjunction is preserved.

Write out the definitions in order to see that 
\[ \pre{(\id_A\times f)}(d(\alpha,\beta)) = d(\pre{(\id_A\times f)}(\alpha),\pre{(\id_A\times f)}(\beta)) \]
\end{proof}

The main theory of this subsection is a straightforward corollary of the lemmas above:

\begin{theorem} For each Heyting category $\cat B$, each partial applicative structure $(A,\cdot)$ of $\cat B$ and each combinatory complete filter $\phi$, the category of assemblies $\Asm(\cat B,(A,\cdot), \phi)$ is a Heyting category. \label{Heyting} \end{theorem}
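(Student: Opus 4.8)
The plan is to derive Theorem \ref{Heyting} as a corollary of the lemmas developed in this subsection, matching them one by one against Definition \ref{heytcat}. First I would recall exactly what has to be checked: that $\Asm(\cat B,(A,\cdot),\phi)$ has finite limits, that each $\sub(X)$ is a Heyting algebra, that each inverse image map $\pre f$ is a homomorphism of Heyting algebras with a left adjoint $\im f$ and a right adjoint $\forall_f$, and that these adjoints satisfy the Beck-Chevalley condition along pullback squares.

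Next I would assemble the ingredients. Theorem \ref{basis} already establishes that the category is regular, hence in particular that it has finite limits, with those finite limits sitting directly above finite limits of $\cat B$ via the functor $\uo$. The lemma that $\sub(X)$ is a Heyting algebra for each assembly $X$ supplies the second requirement: through the equivalence $E_X$ of Lemma \ref{insegequiv} between $\sub(X)$ and the initial segment $[\emptyset,\rr X]$ of $(\sub(A\times\uo X),\rleq)$, the biCartesian closed structure $\otimes$, $\oplus$, $d$ on that preorder transports to finite meets, finite joins, and Heyting implication on $\sub(X)$. The last lemma of the subsection then handles morphisms: for $\phi$-total $f:X\to Y$ the map $\pre{(\id_A\times f)}:[\emptyset,\rr Y]\to[\emptyset,\rr X]$ has a right adjoint and preserves $d$, which by the preceding lemma on commutation of inverse image maps transfers to $\pre f$ itself; and since $\Asm(\cat B,(A,\cdot),\phi)$ is regular, $\pre f$ automatically has a left adjoint $\im f$ satisfying Beck-Chevalley.

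To close the argument I would observe that a functor with both adjoints preserves all limits and colimits that exist, so $\pre f$ preserves finite meets and joins together with the top and bottom element, hence, combined with preservation of $d$, it is a genuine homomorphism of Heyting algebras; and that the Beck-Chevalley identity for $\forall_f$ is the formal dual of the one for $\im f$, so it follows as soon as the latter holds. The only point that needs attention is bookkeeping rather than mathematics: verifying that the pullback squares appearing in Definition \ref{heytcat} are precisely the ones produced in the proof of Theorem \ref{basis}, namely pullbacks in $\Asm(\cat B,(A,\cdot),\phi)$ lying above pullbacks in $\cat B$, so that the Beck-Chevalley conditions line up. I do not expect any genuine obstacle here, since all the analytic content has already been discharged in the earlier lemmas; the proof is essentially a short citation of them.
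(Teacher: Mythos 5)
Your proposal is correct and matches the paper's own treatment: the paper states Theorem \ref{Heyting} as a straightforward corollary of Theorem \ref{basis} and the three preceding lemmas, which is exactly the assembly you carry out. Your explicit bookkeeping of which lemma supplies which clause of Definition \ref{heytcat} is just a more detailed version of the same argument.
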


\begin{remark}[Realizability toposes]\newcommand\exreg{\textrm{ex/reg}}
By the way, \emph{realizability toposes} are \emph{ex/reg completions} of specific categories of assemblies.
The ex/reg completion $\cat C_\exreg$ of a regular category $\cat C$ freely adds quotients of internal equivalence relations, while preserving regular epimorphism in $\cat C$. The construction is a left biadjoint to the inclusion of regular categories and regular functor into the category of exact categories and regular functors \cite{MR1600009, MR1358759, MR1056382}. I have studied the properties of ex/reg completions of categories of assemblies in \cite{RealCats, MSC:8896618}. The category $\Asm(\cat B,(A,\cdot),\phi)_\exreg$ is a topos if $\cat B$ is, because in that case $\Asm(\cat B,(A,\cdot),\phi)$ has a \emph{generic monomorphism} (see \cite{Menni00exactcompletions, MR1900904}).
\end{remark}

\subsection{Realizability}\label{real}
The category of assemblies are a tool for studying many forms of realizability. This section shows how to connect a realizability interpretation to a category of assemblies.

As in ordinary categorical logic, the internal language of a category of assemblies assigns subobjects of its terminal object to each proposition. The lattice of subobjects of $1$ in $\Asm(\cat B,(A,\cdot), \phi)$ is equivalent to the poset $(\sub(A), \rleq)$ in $\cat B$ by lemma \ref{insegequiv}. Thus the category of assemblies assigns an equivalence class of subobjects of $A$ to each proposition. Realizability makes these choices inductive on subformulas.

\newcommand\rlss{\mathrel{\bf r}}
\begin{defin}[Realizability] I recursively define the formula $a\rlss p$, where $p$ is a formula of the internal language of $\Asm(\cat B,(A,\cdot),\phi)$ and $a$ is a variable symbol of type $A$. This is called the realizability relation.

For readability, let 
\[ \comb 0 = \db{\lambda x,y.x}, \comb 1 = \db{\lambda x,y.y}, A_2 = \subob{a\of A| \forall k\of\comb 0\cup\comb 1. a\cdot k\converges} \]

The following clauses define the realizability relation.
\begin{align*}
(a\rlss t=s) \iff & t=s \\
(a\rlss \top) \iff & \top \\
(a\rlss \bot) \iff & \bot \\
(a\rlss p\land q) \iff & a\of A_2\land \forall k\of \comb 0. (a\cdot k\rlss p) \land \forall l\of \comb 1. (a\cdot l\rlss q) \\
(a\rlss p\vee q) \iff & (a\of A_2 \land (\forall k\of\comb 0. a\cdot k\of \comb 0)\land (\forall l\of\comb 1. (a\cdot l\rlss p)))\vee \\
& (a\of A_2 \land (\forall k\of\comb 0. a\cdot k\of \comb 0)\land (\forall l\of\comb 1. (a\cdot l\rlss p)))\\
(a\rlss p\to q) \iff & \forall b\of A. (b\rlss p) \to a\cdot b\converges\land (a\cdot b \rlss q)\\
(a\rlss \exists x\of X. p) \iff & a\of A_2 \land \exists x\of \uo X.\forall k\of \comb 0. (a\cdot k,x)\of \rr X \land  \forall l\of \comb 1. (a\cdot l \rlss p) \\
(a\rlss \forall x\of X. p) \iff & \forall b\of A,x\of \uo X. (b,x)\of \rr X \to  a\cdot b\converges\land (a\cdot b \rlss p)
\end{align*}
The realizability of atomic relations, which come form subobjects $U\subseteq X$ in $\Asm(\cat B, (A,\cdot), \phi)$ is determined by a choice between the representations that $U$ has in $[\emptyset,\rr X]$. For the chosen $V\subseteq A\times \uo X$ realizability is defined as follows:
\[ a\rlss x\of U \iff (a,x)\of V \]

The realizability interpretation \emph{satisfies} a proposition $p$ if $\subob{a\of A| a\rlss p}\in \phi$.
\end{defin}

\begin{remark}[Diversity]
The definitions of $\forall$, $\to$ and validity impose a preorder on formulas, which forms a Heyting algebra in sound realizability interpretations. This determines how the rest of the logic is realized up to equivalence. So even though realizability interpretations in the literature may differ in the details, they often satisfy the same propositions.
\end{remark}

\begin{lemma} Let $p$ be a proposition in the internal language of $\Asm(\cat B,(A,\cdot), \phi)$ and let $\tau_p\in \sub(1)$ be its truth value. Then $\subob{a\of A| a\rlss p}$ is equivalent to $E_1(\tau_p)$. \end{lemma}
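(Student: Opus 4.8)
The plan is to prove the statement by induction on the structure of $p$. Because the clauses for $\exists$ and $\forall$ bind a variable, I would carry out the induction over formulas with free variables and prove the following generalization: if $p$ is a formula whose free variables have type $X$ (an object of $\Asm(\cat B,(A,\cdot),\phi)$; take $X$ to be the product of the types of the individual variables) and $[p]\in\sub(X)$ is its interpretation in the internal language, then $E_X([p])$ is equivalent to $\subob{(a,x)\of A\times\uo X \mid a \rlss p}\otimes\rr X$ in $(\sub(A\times\uo X),\rleq)$, where $\otimes$ is the binary meet introduced in the proof that $\sub(X)$ is a Heyting algebra; write $\|p\|$ for $\subob{(a,x)\of A\times\uo X\mid a\rlss p}$ and $\sim$ for the equivalence generated by $\rleq$. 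The extra factor $\rr X$ cannot be dropped: for $p\equiv\top$ over a non-empty context one has $\|p\|=A\times\uo X$ whereas $E_X([p])=E_X(X)=\rr X$, because the clauses for the atoms $\top$ and $t=s$ discard the realizers of the variable memberships that the clauses for $\exists$ and $\forall$ reinstate. When $p$ is a proposition, $X$ is the terminal object $\nabla 1$, $\rr{\nabla 1}$ is the top of $(\sub(A),\rleq)$, meeting with it is harmless, and the generalization specializes to the lemma via $E_1$ and the identification $\sub(A\times 1)=\sub(A)$.

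For the base and propositional cases I would use the Heyting structure of $(\sub(A\times\uo X),\rleq)$ from the proof that $\sub(X)$ is a Heyting algebra --- meet $\otimes$, join $\oplus$, implication $d$ --- together with the observation that $[\emptyset,\rr X]\simeq\sub(X)$ is exactly the principal downset of $\rr X$, so that its own Heyting implication is $(\beta,\gamma)\mapsto d(\beta,\gamma)\otimes\rr X$. For an atomic $x\of U$, $\|x\of U\|$ is a chosen representative of $U$ in $[\emptyset,\rr X]$, hence $\sim E_X(U)$ by lemma \ref{insegequiv}, and meeting with $\rr X$ does nothing since it is already $\rleq\rr X$; the cases $\top$, $\bot$, $t=s$ are direct computations. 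For $\land$: $E_X([p_1\land p_2])\sim E_X([p_1])\otimes E_X([p_2])\sim(\|p_1\|\otimes\rr X)\otimes(\|p_2\|\otimes\rr X)\sim(\|p_1\|\otimes\|p_2\|)\otimes\rr X$ by the inductive hypothesis and idempotence of $\otimes$ on $\rr X$, while $\|p_1\land p_2\|\sim\|p_1\|\otimes\|p_2\|$ via the evident pairing and unpairing $\lambda$-terms, which lie in $\phi$ by lemma \ref{computableterms}; the case $\lor$ is the same with $\oplus$ and $\comb i_0,\comb i_1$; and for $\to$ one notes that $\|p_1\to p_2\|=d(\|p_1\|,\|p_2\|)$ literally, so the inductive hypothesis gives $E_X([p_1\to p_2])\sim d(\|p_1\|\otimes\rr X,\|p_2\|\otimes\rr X)\otimes\rr X$, which the Heyting-algebra identity $d(\alpha\otimes\gamma,\beta\otimes\gamma)\otimes\gamma=d(\alpha,\beta)\otimes\gamma$ (at $\gamma=\rr X$) turns into $d(\|p_1\|,\|p_2\|)\otimes\rr X=\|p\|\otimes\rr X$. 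Each check is a short $\lambda$-term computation of the kind already used to equip $\sub(X)$ with its Heyting structure.

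The quantifier cases carry the real content. For $p\equiv\exists y\of Y.q$, write $\pi\colon X\times Y\to X$ for the projection, so $[p]=\im\pi([q])$. I would first establish $E_X(\im\pi([q]))\sim\im{(\id_A\times\uo\pi)}(E_{X\times Y}([q]))$: factor the morphism defining $\im\pi([q])$ in $\Asm$ into a regular epimorphism $\epsilon$ followed by a monomorphism, use that regular epimorphisms in $\Asm$ are given by the $e_*$-construction and that $\uo$ preserves finite limits and regular epimorphisms (theorem \ref{basis}), so the realizer relation of $\im\pi([q])$ is $\sim\im{(\id_A\times\uo\epsilon)}$ of that of the representative of $[q]$, and then push the definition of $E$ through $\im{(\id_A\times\uo\pi)}\circ\im{(\id_A\times\uo\mu_{[q]})}$. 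Then, using the inductive hypothesis, that $\im{(\id_A\times\uo\pi)}$ is $\rleq$-monotone, and that an $\rr{X\times Y}$-realizer of $\langle x,y\rangle$ is --- up to a $\phi$-computable conversion --- a pair of an $\rr X$-realizer of $x$ and an $\rr Y$-realizer of $y$ (the product construction in theorem \ref{basis}), both $\im{(\id_A\times\uo\pi)}(\|q\|\otimes\rr{X\times Y})$ and $\|\exists y.q\|\otimes\rr X$ compute out to the subobject of codes encoding, for some $y$, an $\rr Y$-realizer of $y$, a realizer of $q(x,y)$, and an $\rr X$-realizer of $x$; these are interchanged by a pair of $\phi$-computable rebracketing $\lambda$-terms. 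For $p\equiv\forall y\of Y.q$ there is no image to factor, and I would argue from the adjunction $\pre\pi\dashv\forall_\pi$ in $\Asm$: since $E_X$ is an equivalence onto $[\emptyset,\rr X]$, $E_X(\forall_\pi([q]))$ is determined by the rule that $\upsilon\rleq E_X(\forall_\pi([q]))$ iff $\pre\pi(E_X^{-1}(\upsilon))\subseteq[q]$; I would compute $\pre\pi(E_X^{-1}(\upsilon))$ by the pullback formula of theorem \ref{basis} (its realizer relation is $\rr{X\times Y}$ met with the pullback of $\upsilon$ along $\id_A\times\uo\pi$), apply $E_{X\times Y}$ and the inductive hypothesis, move $\rr{X\times Y}$ across the $d$--$\otimes$ adjunction, and --- again using the $\phi$-computable conversion between $\rr{X\times Y}$-realizers and pairs --- recognize the resulting condition on $\upsilon$ as exactly $\upsilon\rleq\|\forall y.q\|\otimes\rr X$; as this holds for all $\upsilon$, the two subobjects are $\sim$. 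I expect the $\forall$ case to be the main obstacle: it is the only step that cannot be reduced to an image/coequalizer computation, and making it work requires threading the membership realizers carefully through the pullback formula, the $\pre\pi\dashv\forall_\pi$ adjunction, and the $d$--$\otimes$ manipulations, and producing the right $\phi$-computable conversion terms. The ever-present secondary subtlety is the $\otimes\rr X$ correction, which must be carried through every inductive step and which forces one to keep apart the two Heyting structures in play, the one on $(\sub(A\times\uo X),\rleq)$ and the truncated one on its downset $[\emptyset,\rr X]$.
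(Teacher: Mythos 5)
The paper gives no proof of this lemma (it is literally ``left as exercise''), so there is no argument to compare yours against; your induction over formulas-in-context, with the strengthened hypothesis $E_X([p])\sim\subob{(a,x)\of A\times\uo X\mid a\rlss p}\otimes\rr X$, is exactly the standard argument the author is implicitly appealing to, and it is correct. You have also identified and handled the two genuinely delicate points --- carrying the $\otimes\,\rr X$ correction through every clause (without it the $\top$ and $t=s$ cases already fail over a nonempty context), and treating $\forall$ via the universal property of $\forall_\pi$ inside the initial segment $[\emptyset,\rr X]$ rather than by an image computation --- so the proposal fills the gap the paper leaves open.
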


\begin{proof} Left as exercise. \end{proof}

\begin{theorem} Let $p$ be a proposition in the internal language of $\Asm(\cat B,(A,\cdot), \phi)$ and let $\tau_p\in \sub(1)$ be its truth value. Then $\tau_p = \top$ if and only if the realizability interpretation satisfies $p$. \label{abracadabra} \end{theorem}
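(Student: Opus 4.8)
The plan is to reduce the whole statement to a single fact about the filter $\phi$ inside the preorder $(\sub(A),\rleq)$ — that a subobject of $A$ lies in $\phi$ exactly when it is the $\rleq$-greatest element — and then to transport this along the equivalences already available.

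First I would assemble the tools. By the lemma just above, $\subob{a\of A| a\rlss p}$ is $\rleq$-equivalent to $E_1(\tau_p)$. The terminal object of $\Asm(\cat B,(A,\cdot),\phi)$ is $\nabla 1$, so $\uo 1=1$ and $\rr 1=A\times 1\cong A$; hence lemma \ref{insegequiv} identifies $E_1$ with an equivalence between $\sub(1)$ and all of $(\sub(A),\rleq)$, and from $E_1(\top)=\rr 1\cong A$ one reads off that $A$ is the $\rleq$-greatest element of $\sub(A)$. Since an equivalence of preorders preserves and reflects greatest elements, $\tau_p=\top$ holds if and only if $E_1(\tau_p)$ is $\rleq$-greatest.

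The core of the argument is then the claim that, for every $U\in\sub(A)$, one has $U\in\phi$ if and only if $U$ is $\rleq$-greatest, i.e.\ $A\rleq U$, i.e.\ $h(A,U)\in\phi$ (where $h$ abbreviates the operator $h_1$ from the definition of $\rleq$, with $1$ terminal in $\cat B$). For $A\rleq U\Rightarrow U\in\phi$: note $A\in\phi$ (for instance $\db{\lambda x,y.x}\subseteq A$ and $\db{\lambda x,y.x}\in\phi$ by lemma \ref{computableterms}); by the definition of $h$ one has $h(A,U)\times A\subseteq\dom(\cdot)$ and $\im{(\cdot)}(h(A,U)\times A)\subseteq U$, so the filter axioms give $\im{(\cdot)}(h(A,U)\times A)\in\phi$ and upward closure gives $U\in\phi$. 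For $U\in\phi\Rightarrow A\rleq U$: take $K=\db{\lambda x,y.x}\in\phi$, the set of codes of the first projection $A^2\to A$; then for $k\of K$ and $u\of U$ the product $k\cdot u$ is defined and $(k\cdot u)\cdot b=u$ for every $b\of A$, so $K\times U\subseteq\dom(\cdot)$, whence $\im{(\cdot)}(K\times U)\in\phi$ by the filter axioms, and since each of its elements is a total constant map with value in $U$ it is contained in $h(A,U)$; upward closure gives $h(A,U)\in\phi$.

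Finally I would glue the steps together. Membership in $\phi$ is $\rleq$-invariant (if $U\rleq V$ and $U\in\phi$, then $\im{(\cdot)}(h(U,V)\times U)\in\phi$ and lies in $V$, so $V\in\phi$), so the interpretation satisfies $p$ — i.e.\ $\subob{a\of A| a\rlss p}\in\phi$ — precisely when $E_1(\tau_p)\in\phi$, which by the claim is precisely when $E_1(\tau_p)$ is $\rleq$-greatest, which by the second paragraph is precisely when $\tau_p=\top$. The only real obstacle is the claim of the third paragraph, and in particular the direction $U\in\phi\Rightarrow A\rleq U$: this is the one place combinatory completeness genuinely enters, since one has to exploit that $\phi$ contains an honest $K$-combinator producing \emph{total} maps in order to invoke the application-closure axiom of the filter.
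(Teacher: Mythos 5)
Your argument is correct and follows the same route as the paper's own proof, which is a single sentence transporting the question along $E_1$ and the preceding lemma. What you add is an explicit justification of the step the paper elides, namely that $U\in\phi$ if and only if $A\rleq U$ --- in particular the direction $U\in\phi\Rightarrow h_1(A,U)\in\phi$, obtained from a total $\comb k$-combinator supplied by combinatory completeness --- which is exactly what the ``only if'' half of the theorem requires and which the paper's one-line proof leaves implicit.
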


\begin{proof} The equivalence of $E_1(\top)$ and $\subob{a\of A| a\rlss p}$ means that one is in $\phi$ if the other is, but $E_1(\top)=A\in \phi$. \end{proof}

Categories of assemblies are connected to an internal form of realizability in arbitrary Heyting categories. The filters add a lot of flexibility.
\begin{itemize}
\item Ordinary realizability interpretations satisfy every formula that has realizers. This corresponds to the filter of inhabited subobjects of a partial combinatory algebra.
\item The correct interpretation of `having realizers' may be that the set of realizers has a global section, however. In that case the filter set of subobjects of $A$ which have a global section determine the category of assemblies.
\item In relative realizability, $A$ has a subset $A'$ of special realizers and a proposition is valid if it has a realizer in $A'$. Having realizers can refer to either inhabited subobjects or to global sections. In the first case $A'$ is a subobject of $A$ and the filter is the set of subobject that intersect $A'$. In the second case $A'$ is a set of global section and the filter is the set of subobjects through which some of these globals sections factorize.
\item Filters are closed under intersections and hence realizability interpretations are too. This is a new construction for realizability models as far as I know.
\end{itemize}
Thus categories of assemblies cover a lot of ground as far as realizability in concerned.

\section{Realizability categories}\label{Realcats}
This section contains a characterization of the categories that are equivalent to a category of assemblies. Later sections explain to what extend and how the characteristic properties of these categories can be expressed in their internal language.

\subsection{Characteristic properties}\newcommand\A{{\mathring A}}
The relevant properties of realizability catego\-ries are complicated enough to devote a subsection to their definition.

The definition involves some extra structure of $\Asm(\cat B, (A,\cdot), \phi)$ for the following reason. In any Heyting category $\cat B$ the terminal object $1$ and the unique map $!:1\times 1 \to 1$ form a partial applicative object and $\set 1$ is combinatory complete filter. In this case $\uo:  \Asm(\cat B, (1,!), \set 1) \to \cat B$ is an equivalence of categories. This means that every Heyting category is a realizability category in a trivial way.

The following extra structure is taken in consideration. 
The category of assemblies $\Asm(\cat B,(A,\cdot), \phi)$ extends the category $\cat B$ with new subobjects. There is a special new subobject $\A$ (which is introduced in definition \ref{ringa}) which generates all others in some sense. The definition of realizability category characterizes the combination of the inclusion $\nabla:\cat B \to \Asm(\cat B, (A,\cdot), \phi)$, the underlying object functor $\uo$ and this assembly $\A$.

Throughout this section, let $\cat B$ and $\cat C$ be Heyting categories let $F:\cat B \to \cat C$ and $U:\cat C\to\cat B$ be two functors, and let $C$ be some object of $\cat C$.

\begin{axiom}[Separability] The functor $F$ is right adjoint to $U$. Both functors are regular. The unit $\eta:\id_{\cat C} \to FU$ is a natural monomorphism. The co-unit $\epsilon: UF \to \id_{\cat C}$ is a natural isomorphism. \label{separ}
\end{axiom}

\begin{remark} I call this the separability axiom because $U$ is a kind of fibred category. To be precise The axiom forces $U$ to be a Street fibration, i.e.\ the composition of a Grothendieck fibration and an equivalence of categories. More relevantly, the axiom also tells us that the Grothendieck part is \emph{separated} relative to the regular topology on $\cat B$. \end{remark}

The following definition helps to formulate the next axiom, which says that every object is an assembly.

\begin{defin}[Prone]\label{prone1} An arrow $p: X\to Y$ is \emph{prone} (or \emph{Cartesian}) if for each $f:Z\to Y$ such that $Uf = Up\circ g$ for some $g:UZ\to UX$, there is a unique $h:Z\to X$ such that $p\circ h = f$ and $Uh = g$.
\end{defin}

The following characterization of prones is very useful in this context.

\begin{lemma}\label{prone2} A morphism $f:X\to Y$ of $\cat C$ is prone if and only if the naturalness square $FUf\circ \eta_X = \eta_Y\circ f$ is a pullback.
\[ \xymatrix{
X\ar[r]^f\ar[d]_{\eta_X} \ar@{}[dr]|<\lrcorner & Y \ar[d]^{\eta_Y}\\
FUX \ar[r]_{FUf} & FUY
}\]
\end{lemma}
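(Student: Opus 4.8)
I want to show that a morphism $f\colon X\to Y$ in $\cat C$ is prone (in the sense of definition \ref{prone1}) exactly when the naturality square for $\eta$ at $f$ is a pullback. The strategy is to exploit the adjunction $U\dashv F$ from axiom \ref{separ} to translate the universal property defining pronemess into the universal property of the pullback square, using the fact that $\epsilon$ is an isomorphism (so $U$ is, up to the canonical iso, faithful and in fact $UF\simeq\id$) and that $\eta$ is a monomorphism.

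\emph{From pullback to prone.} Suppose the square $FUf\circ\eta_X=\eta_Y\circ f$ is a pullback. Given $g\colon UZ\to UX$ and $h\colon Z\to Y$ with $Uh=Uf\circ g$, I want a unique $k\colon Z\to X$ with $f\circ k=h$ and $Uk=g$. Transpose $g$ across the adjunction to get $\bar g\colon Z\to FUX$, namely $\bar g=Fg\circ\eta_Z$ (or equivalently the adjunct of $g$). The compatibility $Uh=Uf\circ g$ should translate, after applying $F$ and precomposing with $\eta_Z$ and using naturality of $\eta$, into $FUf\circ\bar g=\eta_Y\circ h$; here one uses that $h$'s own transpose is $FUh\circ\eta_Z=\eta_Y\circ h$ by naturality. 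So $(h,\bar g)$ is a cone over the pullback and yields a unique $k\colon Z\to X$ with $\eta_X\circ k=\bar g$ and $f\circ k=h$. It remains to check $Uk=g$: apply $U$ to $\eta_X\circ k=\bar g=Fg\circ\eta_Z$, use that $U\eta_X$ composed with $\epsilon$-type isomorphisms is the identity (the triangle identity $\epsilon_{?}\circ U\eta=\id$), and that $U$ applied to a transpose recovers $g$. Uniqueness of $k$ with these two properties follows because $\eta_X$ is monic: any $k'$ with $f\circ k'=h$ and $Uk'=g$ has $\eta_X\circ k'$ equal to the same adjunct $\bar g$, hence $k'=k$.

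\emph{From prone to pullback.} Conversely, assume $f$ is prone. To show the square is a pullback, take any $T$ with maps $u\colon T\to Y$ and $v\colon T\to FUX$ satisfying $FUf\circ v=\eta_Y\circ u$. Let $g=\epsilon_X\circ\dots$—more precisely let $g\colon UT\to UX$ be the transpose of $v$ (apply $U$ and compose with the inverse of the appropriate component of $\epsilon$). Applying $U$ to $FUf\circ v=\eta_Y\circ u$ and using the triangle identities turns this into $Uf\circ g=Uu$. Pronemess of $f$ then gives a unique $k\colon T\to X$ with $f\circ k=u$ and $Uk=g$; and $Uk=g$ forces $\eta_X\circ k=v$ because both are the transpose of $g$ and $\eta_X$ is monic (or: $\eta_X\circ k$ and $v$ have equal transposes along $U\dashv F$). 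Uniqueness of the mediating map into the pullback follows again from $\eta_X$ monic. This establishes the pullback property.

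\emph{Main obstacle.} The bookkeeping of adjunction transposes is where care is needed: $U\dashv F$ means $\cat C(UZ, -)$ no, rather $\cat B(UZ, X')\cong\cat C(?, FX')$ only makes sense when the left side lives in $\cat B$ and $X'$ in $\cat B$, so one must keep straight that $\eta$ is the unit $\id_{\cat C}\to FU$ and that the relevant transposition is between $\cat C$-maps $Z\to FW$ and $\cat B$-maps $UZ\to W$. The genuinely delicate point is verifying that ``$Uk=g$'' and ``$\eta_X\circ k=\bar g$'' are equivalent conditions on $k$, which is exactly where monicity of $\eta_X$ and the triangle identity $U\epsilon$-or-$\epsilon U$ combined with $\eta$ do the work; once that equivalence is pinned down, both directions are formal. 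Nothing here needs regularity of $F$ and $U$ beyond what axiom \ref{separ} already grants, and the co-unit being an isomorphism is used only to make the transposition on the $\cat B$-side clean.
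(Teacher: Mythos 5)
Your proof is correct: the paper leaves this lemma as an exercise, and your argument via the adjunction transposes ($\bar g = Fg\circ\eta_Z$ in one direction, $\epsilon_{UX}\circ Uv$ in the other) combined with naturality of $\eta$ and the triangle identities is exactly the standard intended one; as your own computations show, neither the monicity of $\eta$ nor the invertibility of $\epsilon$ is in fact needed, so the equivalence holds for any adjunction $U\dashv F$. One tiny slip worth fixing: the transpose of $v\colon T\to FUX$ is $\epsilon_{UX}\circ Uv$, i.e.\ it uses the component of $\epsilon$ itself rather than ``the inverse of the appropriate component of $\epsilon$'' (which would not even typecheck), but this parenthetical does not affect the validity of the argument.
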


\begin{proof} Left as exercise. \end{proof}

\begin{axiom}[Weak genericity] For each object $X$ of $\cat C$ there is a span $p: Y\to C$, $e:Y\to X$ where $p$ is prone and $e$ is regularly epic. \label{weakgen} \end{axiom}

\begin{remark} For an ordinary fibred category $U:\cat C\to\cat B$ a \emph{generic object} is an object $G$ of $\cat C$ such that there is a prone morphism $X\to G$ for every object $X$ of $\cat C$. The object $G$ the axiom only generates a cover for each object of $\cat C$, i.e.\ it is only \emph{weakly generic}. This is why I call this axiom the weak genericity axiom. \end{remark}

The last axiom says that every morphism is $\phi$-total for the filter of $R\subseteq UC$ for which $FR$ intersects $C$.

\begin{axiom}[Tracking] There is a partial operator $\cdot: C\times C\partar C$ for which the inclusion $\dom(\cdot)\subseteq C\times C$ is prone and which has the following property. Let $f = (f_0,f_1): Y \to C\times X$ be a monic such that $f_1: Y\to X$ is a regular epimorphism, let $p:P\to C$ be prone and let $g:P\to X$. There is an inhabited $R\subseteq C$ such that $r\cdot p(x)\converges$ for all $x\of P$ and $(r,x)\mapsto (r\cdot p(x),g(x))$ factors though $f$.
\[ \xymatrix{
\dom(\cdot) \ar[r]_{\pi_1}\ar@/^/[rr]^\cdot & C & C & Y \ar[l]_{f_0} \ar[d]^{f_1} \\
R\times P \ar[u]^{\id_R\times p} \ar[r]_{\pi_1}\ar@{.>}@/_/[urrr] & P\ar[u]_p \ar[rr]_g  && C
}\]\label{tracking}
\end{axiom}

The upcoming subsection explains why $\Asm(\cat B,(A,\cdot), \phi)$ is a realizability category. The one after that shows that for every category that satisfies these axioms there is an equivalent category of assemblies.

\hide{ Bedenk wat een zwak generieke deelcategorie allemaal kan doen. Je hebt de relevante noties van modest set en van partitioned assembly. Beide tracking axioma's zijn waar, en het laatste axioma kunnen we vervangen door een zwakke vorm van Cartesische geslotenheid.}

\hide{ beperkte exponentiatie: $C^X$ bestaat voor alle objecten. Zelfs $C_\bot^X$. Deze representeert partiele functies met gesloten domain. Vreemd eigenlijk. Het klopt ook niet wegens extensionele equivalentie. Antwoord: met zwakke exponenti\"ele objecten schieten we niks op. }

\subsection{Satisfaction}
This subsection demonstrates that $\Asm(\cat B,(A,\cdot),\phi)$ satisfies the axioms in the previous subsection for any partial applicative structure $(A,\cdot)$ in $\cat B$ and any combinatory complete filter $\phi$. Theorem \ref{basis} says that axiom \ref{separ} holds if $F = \nabla$ and $U = \uo$. 
I now formally introduce the assembly $\A$ which takes the place of $C$ in each category of assemblies.

\begin{defin} Let $(A,\cdot)$ be a partial applicative structure and let $\phi$ be a combinatory complete filter. Let $\A$ be the diagonal assembly $(A,\subob{(a,a)\of A\times A| a\of A})$. \label{ringa}\end{defin}

The next theorem justifies axiom \ref{weakgen}.

\begin{theorem}[Weak genericity] For each assembly $X$ of $\Asm(\cat B, (A,\cdot),\phi)$ there is an assembly $Y$ with a prone morphism $p:Y\to \A$ and a regular epimorphism $e:Y\to X$.\end{theorem}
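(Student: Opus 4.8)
The plan is to take $Y$ to be the \emph{partitioned assembly} of $X$, in which the realizers of $X$ become genuine elements. Fix a monomorphism $m=(m_0,m_1)\colon M\to A\times\uo X$ representing the subobject $\rr X\in\sub(A\times\uo X)$, and set $\uo Y:=M$, $p:=m_0\colon M\to A$, $e:=m_1\colon M\to\uo X$, and $\rr Y:=\pre{(\id_A\times p)}(\rr\A)$; in the notation of definition \ref{hulpfuncties} this just says $Y:=p^*(\A)$. The assembly condition for $Y$ holds because $a=p(y)$ always witnesses $\exists a\of A.(a,y)\of\rr Y$.

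Both $p\colon Y\to\A$ and $e\colon Y\to X$ are $\phi$-total, tracked by any $R\in\phi$ with $\cat B\models\forall r,b\of A.(r\of R\to r\cdot b\converges\land r\cdot b=b)$ --- such $R$ exists because the identity $A\to A$, being the projection $A^1\to A$, is $\phi$-computable. Indeed, if $(b,y)\of\rr Y$ then $b=p(y)=m_0(y)$, so $r\cdot b=b$ satisfies both $(b,p(y))\of\rr\A$ (the diagonal) and $(b,e(y))=(m_0(y),m_1(y))\of\rr X$, the latter since $m$ represents $\rr X$. For proneness I would check definition \ref{prone1} directly: given $f\colon Z\to\A$ in $\Asm(\cat B,(A,\cdot),\phi)$ and $g\colon\uo Z\to M$ in $\cat B$ with $\uo f=m_0\circ g$, the unique candidate lift is the underlying map $g$ itself, and it is a $\phi$-total morphism $Z\to Y$ because any $\phi$-tracker $S$ of $f$ also tracks it: if $(b,z)\of\rr Z$ and $s\of S$ then $s\cdot b=\uo f(z)=m_0(g(z))=p(g(z))$, which is exactly the condition $(s\cdot b,g(z))\of\rr Y$. (Equivalently one invokes lemma \ref{prone2}: the naturalness square for $p$ sits over the trivial square $p=p$ in $\cat B$, and the realizer relation of its pullback in $\Asm(\cat B,(A,\cdot),\phi)$, computed from the limit construction in the proof of theorem \ref{basis} --- using that $\otimes$ with $\nabla$ of an object is the identity up to isomorphism --- is precisely $\rr Y$.)

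It remains to see that $e$ is a regular epimorphism. First, $\uo e=m_1$ is a regular epimorphism in $\cat B$, because its image is $\subob{x\of\uo X|\exists a\of A.(a,x)\of\rr X}$, which is all of $\uo X$ by the defining condition on the assembly $X$. Since $\uo$ preserves finite limits, the kernel pair of $e$ in $\Asm(\cat B,(A,\cdot),\phi)$ lies over the kernel pair of $m_1$ in $\cat B$, and the construction of coequalizers of kernel pairs in the proof of theorem \ref{basis} exhibits the coequalizer of that kernel pair as $\uo e\colon Y\to(\uo e)_*(Y)$ --- here one uses that the regular epimorphism $m_1$ is the coequalizer of its own kernel pair in $\cat B$. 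Finally $(\uo e)_*(Y)=(\uo X,\im{(\id_A\times m_1)}(\rr Y))$, and a direct computation gives $\im{(\id_A\times m_1)}(\rr Y)=\rr X$: the subobject $\rr Y=\pre{(\id_A\times m_0)}(\rr\A)$ is isomorphic to the graph of $m_0$, whose image under $\id_A\times m_1$ is the image of the monomorphism $m$, namely $\rr X$. Hence $(\uo e)_*(Y)=X$ as assemblies, so $e$ is literally the coequalizer of its own kernel pair, i.e.\ a regular epimorphism, and $(p,e)$ is the required span.

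I expect the only genuinely delicate part to be the last paragraph: reading off from the proof of theorem \ref{basis} the precise recipe for coequalizers of kernel pairs in $\Asm(\cat B,(A,\cdot),\phi)$, and carrying out the direct-image computation, which hinges on $m$ being a representative of the subobject $\rr X$. Everything else amounts to applying a tracker for the identity combinator.
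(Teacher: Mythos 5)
Your proposal is correct and follows the same route as the paper's (very terse) proof: take a jointly monic span $(p,e)\colon Y\to A\times\uo X$ representing $\rr X$, set $Y=p^*(\A)$, and observe that $p$ is prone and $e$ is regularly epic with $e_*(p^*(\A))=X$. You have simply filled in the details ($\phi$-totality via a tracker for the identity, proneness from definition \ref{prone1}, and the direct-image computation showing $e_*(Y)=X$) that the paper leaves implicit.
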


\begin{proof} For each object $X$ of $\Asm(\cat B, (A,\cdot),\phi)$, any monomorphism $(p,e):Y\to A\times \uo X$ that represents $\rr X$, $X = e_*(p^*(\A))$ by definition (see definition \ref{hulpfuncties} for $e_*$ and $p^*$). Let $Y = p^*(\A)$, then $p:Y\to X$ is prone and $e:Y\to X$ is a regular epimorphism. \end{proof}

The following lemma explains the role of the filter $\phi$.

\begin{lemma} For all $I\in \sub A$, $\nabla I\cap \A$ is inhabited if and only if $I\in \phi$.\label{inhab} \end{lemma}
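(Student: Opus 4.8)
The plan is to separate a general \emph{inhabitation criterion} inside $\Asm(\cat B,(A,\cdot),\phi)$ from the particular computation of the subobject $\nabla I\cap\A$. For the criterion I would show that an assembly $(Z,\zeta)$ is inhabited if and only if $\subob{a\of A| \exists z\of Z.\,\zeta(a,z)}\in\phi$. Being inhabited means the unique $\phi$-total morphism $(Z,\zeta)\to\nabla 1$ is a regular epimorphism. From the construction of coequalizers in the proof of theorem \ref{basis}, a morphism $h\colon X\to Y$ of assemblies is a regular epimorphism exactly when $\uo h$ is a regular epimorphism of $\cat B$ and the induced comparison from $(\uo h)_*(X)$ to $Y$ (which has underlying morphism an identity) is an isomorphism. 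Applied to $h=\big((Z,\zeta)\to\nabla 1\big)$ this says: $Z\to 1$ is a regular epimorphism of $\cat B$, and, identifying $A\times 1$ with $A$, the assembly $(1,J)$ with $J:=\subob{a\of A| \exists z\of Z.\,\zeta(a,z)}$ is isomorphic to $\nabla 1=(1,A\times 1)$. Since $A$ is the top of the relevant $\rleq$-ordered poset (because $\db{\lambda x.x}\subseteq h_1(V,A)$ for every $V$), that isomorphism is equivalent to $A\rleq J$, i.e.\ to $K:=\subob{a\of A| \forall b\of A.\,a\cdot b\converges\land a\cdot b\of J}\in\phi$.

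Next I would check $K\in\phi\iff J\in\phi$ using the filter axioms and combinatory completeness. If $J\in\phi$, then $\db{\lambda x,y.x}\in\phi$ (lemma \ref{computableterms}) and $\db{\lambda x,y.x}\times J\subseteq\dom(\cdot)$, hence $\im{(\cdot)}(\db{\lambda x,y.x}\times J)\in\phi$; every element $k\cdot j$ of it satisfies $(k\cdot j)\cdot b=j\of J$, so $\im{(\cdot)}(\db{\lambda x,y.x}\times J)\subseteq K$ and therefore $K\in\phi$. Conversely, if $K\in\phi$, then $A\in\phi$ (as $\db{\lambda x,y.x}\subseteq A$) and $K\times A\subseteq\dom(\cdot)$, so $\im{(\cdot)}(K\times A)\in\phi$ and $\im{(\cdot)}(K\times A)\subseteq J$, whence $J\in\phi$. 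Finally $J\in\phi$ makes $J$, and hence $Z$, inhabited, so the two clauses of the criterion collapse to the single condition $J\in\phi$.

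For the subobject itself, let $\mu_I\colon A_I\to A$ be the chosen representative monomorphism of $I$ (definition \ref{promonos}). Then $\nabla I$ is carried by $\nabla\mu_I\colon\nabla A_I\to\nabla A$ and $\A$ by the canonical $\phi$-total monomorphism $\id_A\colon\A\to\nabla A$ of definition \ref{hulpfuncties}, so $\nabla I\cap\A$ is the pullback of $\nabla\mu_I$ along $\id_A\colon\A\to\nabla A$. The universal property of the operator $f^*$ of definition \ref{hulpfuncties} (or a direct computation with the pullback recipe of theorem \ref{basis}, where this pullback appears as $\mu_I^*(\A)\otimes_{A_I}\nabla A_I\cong\mu_I^*(\A)$, using $Y\otimes_{\uo Y}\nabla\uo Y\cong Y$) identifies it with $\mu_I^*(\A)=(A_I,\subob{(a,y)\of A\times A_I| a=\mu_I(y)})$. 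Hence the assembly underlying the subobject $\nabla I\cap\A$ is, up to isomorphism, $\mu_I^*(\A)$, and applying the inhabitation criterion to it, $\nabla I\cap\A$ is inhabited if and only if $\subob{a\of A| \exists y\of A_I.\,a=\mu_I(y)}=\im{\mu_I}(A_I)=I$ belongs to $\phi$.

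I expect the main obstacle to be the inhabitation criterion: one must read off from the proof of theorem \ref{basis} the explicit shape of regular epimorphisms of assemblies and then convert ``the comparison is an isomorphism'' into a membership statement in $\phi$, and it is precisely this conversion that uses combinatory completeness, through the combinators $\db{\lambda x,y.x}$ and $\db{\lambda x.x}$ and the closure of $\phi$ under application. The remaining steps are routine manipulation of the operators of definition \ref{hulpfuncties}.
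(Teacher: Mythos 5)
Your proof is correct and takes essentially the same route as the paper's: both reduce inhabitation of $\nabla I\cap\A$ to the $\phi$-totality of an identity-underlying map into an assembly over $1$ whose realizer set is (equivalent to) $I$, and both handle the two directions with $\db{\lambda x,y.x}$ and the filter's closure under application. The only difference is presentational — you first isolate a general inhabitation criterion for arbitrary assemblies and then compute $\nabla I\cap\A\cong\mu_I^*(\A)$ explicitly, where the paper compresses both steps into one line using the operators of definition \ref{hulpfuncties}.
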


\begin{proof} If $\nabla I\cap \A$ is inhabited, then so is $I$, because $\uo(\nabla I\cap \A) = I$ and $\uo$ preserves inhabited objects. Since $I\in \phi$ are also inhabited, assume that $I$ is inhabited and focus on the potentially non-total map $\id_1:\top \to !^*(\nabla I\cap \A)$ (see definition \ref{hulpfuncties} for $!^*$). If $I\in \phi$, then $\db{\lambda x,y.x}\cdot I\in \phi$ tracks $\id_1$ and $\nabla I\cap \A$ is inhabited. If on the other hand $\nabla I\cap \A$ is inhabited, then some $R\in \phi$ tracks $\id_1:\top \to !^*(\nabla I\cap \A)$. This means that $\im{(\cdot)}(R\times A)\subseteq I$ and that $I\in \phi$.
\end{proof}

Axiom \ref{tracking} follows.

\begin{theorem}[Tracking] There is a partial morphism $\triangledown: \A\times \A \partar \A$ for which the inclusion $\dom\triangledown\to \A\times \A$ is prone that has the following property. Let $f = (f_0,f_1): Y\to \A\times X$ be a monomorphism, such that $f_1$ is a regular epimorphism. Let $p:P\to \A$ be prone and let $g:P\to \A$ be an arbitrary morphism. There is an inhabited $R\subseteq \A$ such that $r\cdot p(x)\converges$ and $(r,x)\mapsto (r(x)\cdot p(x),g(x))$ factors through $Y$.
\end{theorem}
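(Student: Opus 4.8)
The plan is to realize $\triangledown$ by the application operator of $A$ itself and to prove the tracking property by composing a tracker of $g$ with a tracker of $f_0$, the regular epimorphism hypothesis on $f_1$ being exactly what forces the composite to land inside $Y$.

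First I would build $\triangledown$. Since $\uo(\A\times\A)=A\times A$ and the inclusion $\iota\colon\dom(\cdot)\hookrightarrow A\times A$ is monic in $\cat B$, I set $\dom\triangledown=\iota^*(\A\times\A)$ and let $\triangledown\colon\dom\triangledown\to\A$ have underlying map $\cdot\colon\dom(\cdot)\to A$. By definition \ref{hulpfuncties} the inclusion $\dom\triangledown\to\A\times\A$ is a $p^*$-morphism, hence prone by lemma \ref{prone2} (its naturality square is a pullback, just as in the weak genericity proof). For $\phi$-totality of $\triangledown$: by the construction of products in theorem \ref{basis}, a realizer of $(a,b)\in\dom(\cdot)$ in $\dom\triangledown$ is a $c\of A$ with $c\cdot k=a$ for all $k\of\db{\lambda x,y.x}$ and $c\cdot l=b$ for all $l\of\db{\lambda x,y.y}$; so $\db{\lambda c.\,c(\lambda x,y.x)(\lambda x,y.y)}$, which lies in $\phi$ by lemma \ref{computableterms}, tracks $\triangledown$, since it sends such a $c$ to $a\cdot b$, which is defined.

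For the tracking property I would first unpack the data. As $p\colon P\to\A$ is prone, lemma \ref{prone2} gives $P\cong(\uo p)^*(\A)$, so the unique realizer of $x\of\uo P$ in $P$ is $\uo p(x)$, i.e.\ $(\uo p(x),x)\of\rr P$. As $f_1\colon Y\to X$ is a regular epimorphism, the description of regular epimorphisms in the proof of theorem \ref{basis} gives $X\cong(\uo f_1)_*(Y)$, that is $\rr X=\im{(\id_A\times\uo f_1)}(\rr Y)$; internally this says that every realizer of an element $x'\of\uo X$ is in fact a realizer of some $y\of\uo Y$ with $\uo f_1(y)=x'$. Now pick trackers $G\in\phi$ of $g\colon P\to X$ and $F_0\in\phi$ of $f_0\colon Y\to\A$. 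Combinatory completeness, applied to $G$ and $F_0$, yields an $R\in\phi$ — essentially the honest code set underlying $\db{\lambda z.\,F_0(Gz)}$ — such that $r\cdot c\converges$ and $r\cdot c=F_0\cdot(G\cdot c)$ for every $r\of R$ and every $c$ realizing an element of $P$. Arguing internally: given $x\of\uo P$, the realizer $\uo p(x)$ of $x$ in $P$ makes $c:=G\cdot\uo p(x)$ defined and a realizer of $g(x)$ in $X$; hence there is $y\of\uo Y$ with $\uo f_1(y)=g(x)$ and $(c,y)\of\rr Y$; then $F_0\cdot c$ is defined and equals $\uo f_0(y)$, so for every $r\of R$ the value $r\cdot\uo p(x)=F_0\cdot(G\cdot\uo p(x))=\uo f_0(y)$ is defined — this is the convergence clause. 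Moreover $(r\cdot\uo p(x),g(x))=(\uo f_0(y),\uo f_1(y))=f(y)$ lies in the image of the monomorphism $f$, so it has a unique preimage in $Y$, namely $y$; and $(r,x)\mapsto y$ is a $\phi$-total morphism $(\nabla R\cap\A)\times P\to Y$ (tracked by a code built from $G$ that recovers the $P$-realizer $\uo p(x)$ from a realizer of $(r,x)$ and applies $G$), which is the factorization through $Y$ we want. Finally $\nabla R\cap\A$ is inhabited by lemma \ref{inhab}, because $R\in\phi$.

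The main obstacle is the convergence clause "$r\cdot\uo p(x)\converges$ for all $x\of P$". The full $\lambda$-interpretation $\db{\lambda z.\,F_0(Gz)}$ contains codes that may diverge on some inputs, so $R$ must be taken to be an honest tracker produced by combinatory completeness rather than the whole $\lambda$-term denotation, and one must use that realizers of elements of $X$ are genuinely pushed-forward realizers of elements of $Y$ — which is precisely the point at which the hypothesis that $f_1$ is a regular epimorphism, and hence $X\cong(\uo f_1)_*(Y)$, is spent.
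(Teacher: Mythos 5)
Your proposal follows the paper's own argument essentially step for step: $\triangledown$ is the restriction of the application operator to the prone subobject sitting over $\dom(\cdot)$, and the tracking property is obtained by composing a tracker of $g$ with a tracker of $f_0$, using $P\simeq (\uo p)^*(\A)$ and $\rr X=\im{(\id_A\times \uo f_1)}(\rr Y)$ together with monicity of $f$ to get a unique, $\phi$-totally realized factorization through $Y$, exactly as the paper does. The only blemish is the combinator you offer for the totality of $\triangledown$: with the stated associativity conventions $\lambda c.\,c(\lambda x,y.x)(\lambda x,y.y)$ denotes $(c\cdot\comb 0)\cdot\comb 1$ rather than the intended $(c\cdot\comb 0)\cdot(c\cdot\comb 1)$, so it should read $\lambda c.\,(c(\lambda x,y.x))(c(\lambda x,y.y))$ --- a verification the paper in fact omits altogether.
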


\begin{proof} The functor $\nabla$ turns application operator $\cdot: A\times A\partar A$ to an application operator $\nabla(\cdot): \nabla A\times \nabla A \partar \nabla A$. The operator $\triangledown$ is the restriction of $\nabla\cdot$ to $\nabla \dom(\cdot)\cap \A\times\A$. The inclusion of $\nabla \dom(\cdot)\cap \A\times\A$ into $\A\times \A$ is prone as required.

I use some of the operators of definition \ref{hulpfuncties} now. 
Because $p:P\to \A$ is prone, $p^*\A\simeq P$. Because $f_1:Y\to X$ is a regular epimorphism, $(f_1)_*(Y)\simeq X$. Since no generality is lost, assume $P=p^*\A$ and $X = (f_1)_*(Y)$. There is an $I\in \phi$ that tracks $g:p^*\A\to (f_1)_*Y$ and a $J\in \phi$ that tracks $f_0:Y\to C$. Let
\[ J\circ I = \subob{(b\cdot j)\cdot i\of A|b\of\db{\lambda x,y,z.(xy)z},j\of J,i\of I }\]
Under these circumstances for each $r\of I\circ J$ and $x\of \uo P$, there is unique $y\of \uo Y$ such that $g(x) = f_1(y)$ and $r\circ p(x) = f_0(y)$. 
Therefore there is a morphism $h:(J\circ I)\times \uo P \to Y$ such that $f_1\circ h = g\circ \pi_1$ and $f_0(h(r,x)) = r\cdot p(x)$. Let $R = \A\cap (J\otimes I)$. There is a $K\in \phi$ that tracks $g\circ\pi_1:R\times P\to X$. This $K$ also tracks $h:R\times P\to Y$, which is therefore a $\phi$-total morphism. The object $R$ is an inhabited subobject of $A$. Therefore the tracking axiom applies to categories of assemblies.
\end{proof}

The axioms of realizability categories are sound for categories of assemblies. The next subsection tackles completeness.

\subsection{Characterization theorem}\label{characterization}
The last subsection explained why categories of assemblies are realizability categories. This subsection explains why every realizability category is equivalent to a category of assemblies.

Throughout this subsection let Heyting categories $\cat B$, $\cat C$, functors $F:\cat B\to \cat C$, $U:\cat B\to \cat C$ and an object $C$ of $\cat C$ satisfy the axioms for a realizability category. Let $A = UC$, let $\cdot:A\times A\partar A$ equal $U(\cdot):UC\times UC \partar UC$ and let $\phi$ contain those subobjects $S$ of $A$ such that $FS$ intersects $C$. This subsection demonstrates that there is an equivalence $G:\Asm(\cat B,(A,\cdot),\phi)\to \cat C$ which satisfies $G\nabla \simeq F$ and $G\A \simeq C$. I first define an object map for the equivalence. Because $F$ is regular, by lemma \ref{reglog} $F$ induces a transformation $F:\sub(-) \to \sub(F-)$ which commutes with both $\pre f$ and $\im f$ for all morphisms $f$ of $\cat B$.

\begin{defin}\label{ksi} For each assembly $X$, let 
\[ \Xi(X) = \subob{x\of F\uo X| \exists c\of C. (c,x)\of F_{A\times\uo X}(\rr X)} \]
Let $GX$ be $F\uo X_{\Xi(X)}$ and let $g_X = \mu_{\Xi(X)}:GX \to F\uo X$ ($\mu$ from definition \ref{promonos}).
\end{defin}

\begin{lemma} For each $f:X\to Y$ in $\Asm(\cat B,(A,\cdot), \phi)$ there is a unique $h:GX\to GY$ such that $Ff\circ g_X = g_Y\circ h$.
\[\xymatrix{
GX\ar[d]_{g_X} \ar@{.>}[r]^h & GY \ar[d]^{g_Y} \\
F\uo X \ar[r]_{Ff} & F\uo Y
}\]\label{implicit functor}
\end{lemma}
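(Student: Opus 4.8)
The plan is to show that the morphism $Ff: F\uo X \to F\uo Y$ restricts to a morphism between the chosen subobjects $\Xi(X)$ and $\Xi(Y)$, and then to use the fact that the representative monomorphism $\mu_{\Xi(Y)}$ is monic to get both existence and uniqueness of the lift $h$. Uniqueness is immediate: if $h, h'$ both satisfy $g_Y \circ h = Ff \circ g_X = g_Y \circ h'$, then $h = h'$ because $g_Y = \mu_{\Xi(Y)}$ is a monomorphism. So the whole content is existence, which amounts to the containment $Ff^{-1}(\Xi(Y)) \supseteq \Xi(X)$ in $\sub(F\uo X)$, equivalently $\im{(Ff)}(\Xi(X)) \subseteq \Xi(Y)$.

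First I would unwind the definitions. Since $f: X \to Y$ is $\phi$-total, there is some $R \in \phi$ tracking it, i.e.
\[ \cat B\models \forall a,b\of A, x\of \uo X.\ a\of R \land (b,x)\of \rr X \to a\cdot b\converges \land (a\cdot b, f(x))\of \rr Y. \]
Because $R \in \phi$, by the definition of $\phi$ in this subsection $FR$ intersects $C$; in particular (using that $F$ is regular, so $F$ commutes with the relevant inverse and direct image maps by lemma \ref{reglog}, and that $F$ preserves the interpretation of this geometric/first-order statement) we get that inside $\cat C$ there is a realizer-like element: there is some $c_0 \of C$ with $c_0 \of FR$. Then apply $F$ to the tracking statement above: for $x \of F\uo X$, if $(b,x) \of F_{A\times\uo X}(\rr X)$ for some $b \of C$ (which is exactly the condition that $x \of \Xi(X)$ after existential quantification over $b$), then $c_0 \cdot b \converges$ and $(c_0 \cdot b, Ff(x)) \of F_{A\times\uo Y}(\rr Y)$, so $Ff(x) \of \Xi(Y)$. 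This is the containment we need, and by the pullback/universal property giving $\mu_{\Xi(X)}$ as the representative of a subobject, $Ff \circ g_X$ factors through $g_Y = \mu_{\Xi(Y)}$ via a unique $h$.

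The main obstacle, and the step I would be most careful about, is the bookkeeping of how $F$ interacts with the internal-language statements: I need to know that $F$ applied to the formula defining ``$\rr X$ is tracked by $R$'' yields the corresponding formula with $\rr X$ replaced by $F_{A\times\uo X}(\rr X)$, $\cdot$ replaced by $U(\cdot) = \cdot$ transported by $F$ (i.e. $F(\nabla\text{-image})$), and $R$ replaced by $FR$. This is legitimate because the tracking formula is built from $\exists$, $\land$, $\to$ over subobjects and the graph of a partial morphism, and $F$ being regular preserves finite limits, regular epis, images, and hence the interpretation of coherent formulas; the only genuinely first-order connective appearing ($\to$) sits in a position where one only needs $F$ to preserve the relevant pullbacks, which it does. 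Once that translation is in hand, together with lemma \ref{inhab}-style reasoning repackaged as ``$R\in\phi \Rightarrow FR$ meets $C$'', the containment $\im{(Ff)}(\Xi(X)) \subseteq \Xi(Y)$ drops out and the lemma follows. I would present the translation explicitly for this one formula rather than invoking a general metatheorem, since that keeps the argument self-contained.
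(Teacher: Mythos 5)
Your proposal is correct and follows essentially the same route as the paper: uniqueness from $g_Y$ being monic, and existence by taking a tracker $R\in\phi$, using that $FR$ meets $C$, and pushing the tracking statement through the regular functor $F$ to see that $Ff$ restricts to a map $\Xi(X)\to\Xi(Y)$. The one detail the paper states explicitly and you leave implicit is that the witness $c_0\cdot b$ for the existential in $\Xi(Y)$ must itself lie in $C$ (the quantifier there is bounded by $C$, not by $FA$); since you already arranged $c_0\of C$ and $b\of C$ and the operator of the tracking axiom is a partial morphism $C\times C\rightharpoonup C$, this is a one-line addition.
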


\begin{proof} Since $g_Y$ is monic, if $g_Y\circ h'= g_Y\circ h$, then $h'= h$, hence uniqueness.

There is an $R\in \phi$ that tracks $f:X\to Y$. For each $(c,x)\of F(\rr X)$ and $r\of FR\cap C$, $r\cdot c\converges$ and $(r\cdot c,Ff(x))\of F(\rr Y)$ because $R$ tracks $f$ and $r\cdot c\of C$ because $C$ is closed under application. For this reason, the restriction of $Ff$ to $\Xi(X)$ factors through $\Xi(Y)$ and that means there is a map $h:GX \to GY$ such that $Ff\circ g_X = g_Y\circ h$.
\end{proof}

\begin{defin} For each $f:X\to Y$ in $\Asm(\cat B,(A,\cdot), \phi)$ let $Gf$ be the unique morphism $h:GX\to GY$ that satisfies $h\circ g_X = g_Y\circ h$. \end{defin}

\begin{lemma} The maps $G$ form a functor $\Asm(\cat B,(A,\cdot), \phi)\to \cat C$ which satisfies $G\nabla \simeq F$ and $G\A\simeq C$. \end{lemma}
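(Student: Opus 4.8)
The plan is to verify the three claims — functoriality of $G$, the equivalence $G\nabla\simeq F$, and $G\A\simeq C$ — one at a time, with functoriality being mostly bookkeeping on top of Lemma \ref{implicit functor} and the real content lying in the two identifications.

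First I would check functoriality. By Lemma \ref{implicit functor}, for each $f:X\to Y$ the morphism $Gf$ is the unique arrow making the square $Ff\circ g_X = g_Y\circ Gf$ commute, and $g_Y$ is monic. So to see $G(\id_X) = \id_{GX}$ I note $\id_{GX}$ makes the square commute; to see $G(f'\circ f) = Gf'\circ Gf$ I paste the two defining squares for $Gf$ and $Gf'$ together, observe the outer rectangle is the defining square for $G(f'\circ f)$, and invoke uniqueness. No computation is needed beyond diagram pasting.

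Next, $G\nabla\simeq F$. For an object $W$ of $\cat B$, $\nabla W = (W, A\times W)$, so $\uo\nabla W = W$ and $\rr{\nabla W} = A\times W$, i.e.\ the top subobject of $A\times W$. Since $F$ is regular it preserves the top subobject (it preserves monos and regular epis, hence $F$ of the identity mono is the identity mono), so $F_{A\times W}(A\times W)$ is all of $FA\times FW = C \to FW$ composed appropriately; more carefully $\Xi(\nabla W) = \subob{x\of FW\mid \exists c\of C.\ (c,x)\of \top_{C\times FW}}$, which is $\subob{x\of FW\mid \exists c\of C.\ \top}$. This is the top subobject of $FW$ precisely because $\eta$ is a monomorphism, which forces $C$ to be inhabited — indeed $\eta_{C}: C\to FUC = FA$ exhibits the fact, or one uses that the terminal object maps monically, so $C\to 1$ is... here I must be a little careful and argue that $FC_{\Xi}$ is all of $FW$; the cleanest route is that $\id_W:\nabla W$ corresponds to a prone-like situation and $C$ being a partial combinatory algebra object in $\cat B$ is inhabited, so $FC$ is inhabited, so the existential is satisfied everywhere. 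Then $GX = F\uo\nabla W_{\Xi} = FW$ and $g_{\nabla W}$ is an iso; naturality of this iso in $W$ follows from the uniqueness clause of Lemma \ref{implicit functor} exactly as in the functoriality argument.

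Finally, $G\A\simeq C$. Here $\uo\A = A = UC$ and $\rr\A = \subob{(a,a)\of A\times A\mid \top}$, the diagonal $\Delta_A: A\to A\times A$. I would compute $F_{A\times A}(\Delta_A)$: since $F$ is regular and the diagonal is monic, $F$ sends it to $\Delta_{FA}: FA\to FA\times FA$, so $\Xi(\A) = \subob{x\of FA\mid \exists c\of C.\ (c,x)\of \Delta_{FA}\cap(C\times FA)}$, which simplifies to $\subob{x\of FA\mid x\of C}$ — that is, the subobject picked out by $\eta_C: C\to FA = FUC$, using Axiom \ref{separ} ($\eta$ monic) to name it. Hence $F\uo\A_{\Xi(\A)}\simeq C$ via $g_\A = \eta_C$. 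The main obstacle I anticipate is precisely this last identification: one must be careful that the subobject of $FA$ cut out by "$\exists c\of C$ with $(c,x)$ on the image of the diagonal meeting $C\times FA$" really is the monomorphism $\eta_C$ and not something larger, which hinges on $F$ preserving the relevant pullback/image diagrams (Lemma \ref{reglog}) and on $\eta$ being monic so that $C$ genuinely sits inside $FA$ as a subobject rather than merely mapping in. Once that is pinned down, $G\nabla\simeq F$ and $G\A\simeq C$ both drop out, and functoriality is routine.
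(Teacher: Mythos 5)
Your proposal follows the paper's own route step for step: functoriality via the uniqueness clause of Lemma \ref{implicit functor} together with pasting of the defining squares, $G\nabla\simeq F$ by computing that $\Xi(\nabla W)$ is the top subobject of $FW$, and $G\A\simeq C$ by computing that $\Xi(\A)=\subob{a\of FA|\exists c\of C. c=a}$ is exactly the subobject represented by $\eta_C:C\to FA$. Your worry that the latter might be ``something larger'' is unfounded precisely because $\eta_C$ is monic, as you say; this part matches the paper.

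The one step where your justification does not hold up is the claim that $\Xi(\nabla W)$ is all of $FW$. This does require the subobject $C\subseteq FA$ to be inhabited, so that $\exists c\of C.\top$ is the top predicate, but none of the reasons you offer establishes this: $\eta$ being monic does not force $C$ to be inhabited, and the partial combinatory algebra lives in $\cat B$ on $A=UC$, so inhabitedness of $\comb k,\comb s\in\sub(A)$ gives at best that $FA$ is inhabited, not its subobject $C$. The fact you need follows from the tracking axiom: apply it with $X=Y=P=C$, $f$ the diagonal $C\to C\times C$ (monic, with $f_1=\id_C$ regularly epic) and $p=g=\id_C$ (identities are prone by Lemma \ref{prone2}); it returns an inhabited $R\subseteq C$. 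Equivalently, it is forced by the requirement that $\phi$ be a combinatory complete filter, since $A\in\phi$ means exactly that $FA\cap C=C$ is inhabited. To be fair, the paper's own proof passes over this point in silence; but since you flagged it, the argument you supply for it should be replaced by one of the above.
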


\begin{proof} 
The arrow map $G$ preserves composition and identities for the following reason. For any chain of arrows $\set{f_i:X_i \to X_{i+1}|i< n}$, $g_{X_n}\circ (Gf_{n-1} \circ\dotsm\circ Gf_0) = F(f_{n-1}\circ\dotsm\circ f_0)\circ g_{X_0}$ by definition of $G$. By lemma \ref{implicit functor}, $Gf_{n-1} \circ\dotsm\circ Gf_0 = G(f_{n-1}\circ\dotsm\circ f_0)$ and therefore $G$ is a functor.

The functor $G$ now satisfies $G\nabla \simeq F$ for the following reason. For each $X$ in $\cat B$,
\[ \Xi(\nabla X) = \subob{x\of FX| \exists c\of C. (c,x)\of FA\times FX} \]
Therefore $g_X:G\nabla X\to FX$ is an isomorphism.

For $\A$,
\[ \Xi(\A) = \subob{a\of FA| \exists c\of C. c=a} \]
For this reason $G\A$ is isomorphic to $C$.
\end{proof}

Both categories $\cat C$ and $\Asm(\cat B,(A,\cdot), \phi)$ satisfy the weak genericity axiom, but in $\Asm(\cat B,(A,\cdot),\phi)$ each object $X$ has a prone-regularly-epic span $p:Y\to A$, $e:Y\to X$ where $(p,e):Y\to A\times X$ is monic. Proving that $G$ is an equivalence becomes simpler after proving that $\cat C$ has jointly monic prone-regularly-epic spans too.

\begin{lemma}[jointly monic spans] For each object $X$ of $\cat C$, there is a jointly monic prone-regularly-epic span $p:Y\to A$, $e:Y \to X$. \label{JMS} \end{lemma}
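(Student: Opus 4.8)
The plan is to start from the weak genericity axiom, which already hands us for any object $X$ of $\cat C$ a span $p_0: Y_0 \to C$, $e_0: Y_0 \to X$ with $p_0$ prone and $e_0$ regularly epic, and then to ``tighten'' it into a jointly monic span by factoring the pair $(p_0, e_0): Y_0 \to C\times X$ through its image. So first I would form the regular-epi/mono factorization $Y_0 \twoheadrightarrow Y \hookrightarrow C\times X$, available since $\cat C$ is regular, and set $p: Y\to C$ and $e: Y\to X$ to be the composites of the monic $Y\to C\times X$ with the two projections. By construction $(p,e)$ is jointly monic, and $e$ is regularly epic because it is the composite $Y_0 \twoheadrightarrow Y \to X$ of two regular epis (using that $e_0$ factors as $Y_0\twoheadrightarrow Y\xrightarrow{e} X$ and that regular epis compose in a regular category, or alternatively that $e$ is regularly epic because $e_0$ is and regular epis are stable under the relevant cancellation).

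The only thing left to check — and this is the main obstacle — is that the new $p: Y\to C$ is still prone. Proneness is not obviously preserved under factorization: we have cut $Y_0$ down to $Y$, so we must verify that the pullback characterization of lemma \ref{prone2} survives. The clean way is to use lemma \ref{prone2}: $p$ is prone iff the naturality square $FUp\circ \eta_Y = \eta_C\circ p$ is a pullback. We know the corresponding square for $p_0$ is a pullback. The plan is to exploit that the comparison map $Y_0\to Y$ is a regular epimorphism and that $U$ is regular (so $U$ sends it to a regular epi, and $F$ preserves pullbacks, being a right adjoint and regular). Concretely, factor the prone $p_0$ as $Y_0 \twoheadrightarrow Y \xrightarrow{p} C$; since $p_0 = p \circ (Y_0\twoheadrightarrow Y)$ is prone and the left factor is ``vertical enough'' — more precisely $U$ of the comparison map $Y_0\to Y$ is a regular epi, while $Up_0$ already knows the fibre data — a morphism out of the domain that lifts against $Up$ also lifts against $Up_0$, giving the required unique lift through $Y$. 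I would phrase this either via the explicit universal property in definition \ref{prone1} or, more efficiently, by a diagram chase showing the $\eta_Y$-square is a pullback: it sits between the $\eta_{Y_0}$-square (a pullback) and can be obtained as a pullback pasting once one observes $UY \hookrightarrow A\times UX$ is the image factorization in $\cat B$ of $UY_0\to A\times UX$, matching $FUY$ against the pullback of $FUp$ and $\eta_X$.

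An alternative, possibly slicker route avoids re-proving proneness from scratch: observe that prone morphisms into $C$ correspond (via lemma \ref{prone2} and separability) to subobjects of $A\times U(-)$ in $\cat B$, i.e. $p^*$-type assemblies, and that $U$ applied to the image factorization $Y_0\twoheadrightarrow Y\hookrightarrow C\times X$ is exactly the image factorization $UY_0\twoheadrightarrow UY\hookrightarrow A\times UX$ in $\cat B$ (because $U$ is regular, hence preserves both regular epis and monos and the factorization). Since the prone morphism $Y_0\to C$ is, up to the iso $\epsilon$, the canonical projection $p^*$-construction applied to the subobject $\im{(Up_0,Ue_0)}$ of $A\times UX$, restricting to the image just replaces that subobject by an isomorphic one, and the resulting projection is still prone. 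I expect the first approach to be the one actually carried out since it is the most elementary; the genuinely delicate point in either version is the interaction of proneness with the image factorization, and I would spend the bulk of the argument making precise that ``$U$ preserves the factorization'' step and then invoking lemma \ref{prone2}. Everything else — joint monicity and $e$ being regularly epic — is immediate from regularity of $\cat C$.
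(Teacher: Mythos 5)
Your skeleton is the paper's: factor $(p_0,e_0):Y_0\to C\times X$ through its image, observe that joint monicity is free and that $e$ is regularly epic by cancellation of covers, and reduce everything to proneness of $p$. You correctly flag that last step as the only real content, but neither of your two sketches closes it. The direct-lifting version fails outright: to verify definition \ref{prone1} for $p$ you are handed $f:Z\to C$ and $g:UZ\to UY$ with $Uf=Up\circ g$, and the universal property of the prone $p_0$ is unusable because $g$ need not lift through the regular epimorphism $Ue':UY_0\to UY$. The pullback version is the right idea but, as stated, invokes pasting in the unavailable direction: knowing that the outer rectangle (for $p_0$) is a pullback and wanting the right-hand square (for $p$) to be one requires the left-hand square (for the comparison map $e'$) to be a pullback --- i.e.\ requires $e'$ itself to be prone --- which is exactly as hard as what you are proving; ``$U$ preserves the image factorization'' does not by itself supply it.

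Here is the missing step, in the spirit of your second sketch. Let $T=FUY\times_{FUC}C$ be the pullback of $FUp$ along $\eta_C$ and let $c=(\eta_Y,p):Y\to T$ be the comparison map; by lemma \ref{prone2} the task is to show $c$ is an isomorphism. It is monic because $\eta_Y$ is. Pasting $T$ onto the pullback square witnessing proneness of $p_0$ identifies $Y_0$ with $FUY_0\times_{FUY}T$, and under this identification $c\circ e':Y_0\to T$ is the pullback of $FUe'$ along $T\to FUY$. Since $F$ and $U$ are regular and $e'$ is a regular epimorphism, $FUe'$ and hence $c\circ e'$ are regular epimorphisms, so $c$ is a regular epimorphism by right cancellation, and a monic regular epimorphism is an isomorphism. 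For comparison, the paper instead asserts that \emph{every} regular epimorphism of $\cat C$ is prone (so that $e'$ is, and then cancels pasted pullbacks along $FUe'$); treat that assertion with suspicion, since already in $\Asm(\cat S,(\N,\cdot),\phi)$ the regular epimorphism from $(\{0,1\},i\mapsto\{i\})$ onto $(\{*\},\{0,1\})$ is not prone. The argument via the monic-plus-cover comparison map $c$ is the one to write down.
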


\begin{proof} By weak genericity, there is a prone-regularly-epic span $q:Z\to A$ and $f:Z\to X$. The morphism $(q,f): Z\to A\times X$ factors as a regular epimorphism $e':Z\to Y$ followed by a monomorphism $(p,e):Y\to A\times X$. The morphism $e: Y \to X$ is regularly epic because $e\circ e'= f$ and both $e'$ and $f$ are regularly epic. Morphism $p$ is prone for the following reasons.
For each regular epimorphism $r:P\to Q$ in $\cat C$ the naturalness square of $r$ and the unit $\eta: \id_\cat C \to FU$ is a pullback, because $FU$ preserves regular epimorphisms and $\eta$ is monic and because every square with vertical regular epimorphisms and horizontal monomorphisms is a pullback square.
\[\xymatrix{ P\ar[r]^r\ar[d]_{\eta_P}\ar@{}[dr]|<\lrcorner & Q \ar[d]^{\eta_Q} \\
FUX\ar[r]_{FUr} & FUQ
}\]
Therefore every regular epimorphism is a prone morphism. It is easy to see that prone morphisms satisfy 2-out-of-3 using lemma \ref{prone2}, so because $p\circ e' = q$ and both $e'$ and $q$ are prone, so is $p$. 
\end{proof}

\begin{lemma} The functor $G:\Asm(\cat B, (A,\cdot), \phi)\to\cat C$ is an equivalence of categories.\end{lemma}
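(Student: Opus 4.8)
\emph{Overview.} The plan is to show that $G$ is faithful, essentially surjective, and full, the last being where axiom \ref{tracking} is needed. Everything is organized around the natural isomorphism $UG\simeq\uo$ of functors to $\cat B$, which I would establish first. Fix an assembly $X$ and apply $U$ to $\Xi(X)\subseteq F\uo X$. Since $U$ is regular it commutes with direct and inverse images (lemma \ref{reglog}); since the co-unit $\epsilon:UF\to\id_{\cat B}$ is an isomorphism and $\epsilon_A\circ U\eta_C=\id_A$ by a triangle identity, $U\eta_C\colon A\to UFA$ is an isomorphism; and every assembly satisfies $\cat B\models\forall x\of\uo X.\exists a\of A.\rr X(a,x)$. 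Chasing the definition of $\Xi$ through these three facts, $U$ carries $\Xi(X)$ to the top subobject of $UF\uo X\cong\uo X$, so $U(GX)=U(F\uo X_{\Xi(X)})\cong\uo X$; a further $\epsilon$-chase through the definition of $G$ on morphisms shows these isomorphisms are natural. Hence $UG\simeq\uo$.

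\emph{Faithfulness and essential surjectivity.} The forgetful functor $\uo$ is faithful, since a $\phi$-total morphism is a morphism of underlying objects carrying extra structure; as $UG\simeq\uo$, the functor $G$ is faithful. For essential surjectivity fix an object $X$ of $\cat C$; lemma \ref{JMS} gives a jointly monic span $p\colon Y\to C$ prone, $e\colon Y\to X$ regularly epic. Applying $U$, the monomorphism $(Up,Ue)\colon UY\to A\times UX$ with $Ue$ regularly epic represents a subobject making $\tilde X:=(UX,\im{(Up,Ue)}(UY))$ an assembly. Using that $F$ commutes with images, that $p$ prone makes its naturalness square a pullback (lemma \ref{prone2}), naturality of $\eta$, and that $e$ is regularly epic, one computes $\Xi(\tilde X)=\im{\eta_X}(X)$, whence $G\tilde X\cong X$.

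\emph{Fullness.} Given $h\colon GX\to GY$, let $\bar h\colon\uo X\to\uo Y$ be the morphism of $\cat B$ corresponding to $Uh$ under $UG\simeq\uo$. An $\epsilon$-chase as in the first step, together with the fact that $\eta_{F\uo Y}$ is a monomorphism (axiom \ref{separ}), shows $g_Y\circ h=F\bar h\circ g_X$; hence once $\bar h$ is known to be $\phi$-total it is a morphism $f\colon X\to Y$ of the category of assemblies with $Ff\circ g_X=g_Y\circ h$, and since $g_Y$ is monic $Gf=h$. So it remains to prove $\bar h$ is $\phi$-total, and this is where axiom \ref{tracking} enters. For any assembly $Z$ let $\tilde\Xi(Z)$ be the subobject $(\eta_C\times\id)^{-1}(F_{A\times\uo Z}(\rr Z))$ of $C\times F\uo Z$; its projection to $C$ is prone, being a pullback of $F\mu$ (with $\mu$ a monomorphism representing $\rr Z$, so $F\mu$ is prone because $\eta_{F(-)}$ is an isomorphism) followed by the prone projection $C\times F\uo Z\to C$, and its projection to $F\uo Z$ factors through $\Xi(Z)=GZ$ by a regular epimorphism. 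Feeding the monomorphism $\tilde\Xi(Y)\to C\times GY$, the prone morphism $\tilde\Xi(X)\to C$, and $h$ composed with $\tilde\Xi(X)\to GX$ into axiom \ref{tracking} produces an inhabited $R\subseteq C$ with the stated factorization property. Applying $U$: the subobject $UR\subseteq A$ lies in $\phi$ (an inhabited subobject of $C$ pulls back to a member of $\phi$, using naturality of $\eta$), while $U$ identifies $\tilde\Xi(X)\to C$ with the realizer projection $\rr X\to A$, $\tilde\Xi(Y)\hookrightarrow C\times GY$ with $\rr Y\hookrightarrow A\times\uo Y$, and $Uh$ with $\bar h$; the transported factorization then says exactly that $UR$ tracks $\bar h$.

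\emph{Main obstacle.} The crux is the fullness step: arranging the data so that the hypotheses of axiom \ref{tracking} (a prone morphism into $C$; a monomorphism whose second component is regularly epic) exactly match the structures built from $h$, $\rr X$ and $\rr Y$, and then transporting the resulting $R\subseteq C$ back along $U$ to a $\phi$-tracker of $\bar h$ in $\cat B$. The other steps are routine diagram chases with the unit and co-unit.
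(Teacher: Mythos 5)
Your proof is correct and follows essentially the same route as the paper: faithfulness via the natural isomorphism $UG\simeq\uo$, essential surjectivity by applying $G$ to the assembly built from a (jointly monic) prone--regularly-epic span over $C$, and fullness by feeding the canonical span obtained from $(\eta_C\times\id)^{-1}(F(\rr X))$ into the tracking axiom and transporting the resulting inhabited $R\subseteq C$ along $U$ to a tracker in $\phi$. The only difference is organizational --- you establish $UG\simeq\uo$ up front and give slightly more detail on why the relevant maps are prone --- but the decomposition and the key ideas coincide with the paper's.
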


\begin{proof}
The reasons that $G$ is essentially surjective on objects are the following.
Weak genericity says that for each object $Y$ of $\cat C$, there is a span consisting of a regular epimorphism $e:Z\to Y$ and a prone morphism $p:Z\to C$. Using one of these spans, define the assembly $HY$ as follows:
\[ HY = (UY, U\subob{ (a,y) \in C\times Y | \exists z\in Z. a=p(z), y=e(z) })\]

With $\Xi$ from definition \ref{ksi} regularity of $F$ and $U$ implies that:
\[ \Xi(HY) = \subob{x\of FUY| \exists c\of C, z\of FUZ. c=FUp(z), y=FUe(z) }\]

Because $p:Z\to C$ is prone, the naturalness square of the unit $\eta:\id_{\cat C} \to FU$ and $p$ is a pullback by lemma $\ref{prone2}$:
\[ \xymatrix{
Z\ar[r]^p\ar[d]_{\eta_Z} \ar@{}[dr]|<\lrcorner & C \ar[d]^{\eta_C} \\ 
FUZ \ar[r]_{FUp} & FA
}\]
So $\eta_{Z}:Z \to FUZ$ is an element of the subobject $\subob{ z\of FUZ| c=FUp(z) }$. The subobject $\Xi(HY)$ is just $\exists_{FU e}(Z)$, but this subobject contains the monic $\eta_Y: Y\to FUY$, because it is the monic part of an regularly-epic-monic factorization of $FUe\circ \eta_Z$ by naturalness of $\eta$.
\[\xymatrix{
Z \ar[r]^e\ar[d]_{\eta_Z} & Y\ar[d]^{\eta_Y} \\
FUZ\ar[r]_{FUe} & FUY
}\]
Both $\eta_Y:Y\to FUY$ and $g_{GH}:GHY \to FUY$ are members of $\Xi[HY]$, so they are isomorphic. Hence $G$ is essentially surjective on objects. 

Concerning faithfulness: by definition $UC = A$. Since $U$ is a regular functor,
\[ U\Xi[X] = \subob{x\of UF\uo X| \exists c\of A. (c,x)\of U_{F(A\times \uo X)}F_{A\times \uo X}(\rr X)} \]
The isomorphism $\epsilon:UF\to \id_{\cat B}$ together with the definition of $A$-assembly imply that the transpose $g^t:UG\to \uo$ is a natural isomorphism. Since $\uo$ is faithful, so is $G$.

Fullness of $G$ relies on tracking. For each assembly $X$ let the monomorphism $(p_X, e_X):R_X \to C\times GX$ represent the following subobject:
\[ \subob{(c,x)\of C\times GX | (\eta_C(c),g_X(x))\of F_{A\times\uo X}(\rr X) } \]
This way $p_X: R_X\to C$ and $e_X:R_X \to GX$ is a jointly monic prone-regularly-epic span for $GX$. 

Let $Y$ be another assembly and let $h:GX\to GY$ be any map.
These morphisms satisfy the conditions of the tracking axiom: $(p_Y, e_Y): R_Y\to C\times GY$ is a monomorphism, $e_Y:R_Y\to GY$ is an epimorphism, $p_X:R_X\to C$ is prone and there is a map $h\circ e_X:R_X\to GY$. So there is an inhabited $S\subseteq C$ such that $s\cdot p_X(r)\converges$ for all $r\of R_X$ and $s\of S$ and such that there is a morphism $t: S\times R_X\to C\times R_Y$ that satisfies $t_0 = (\cdot)\circ (\id_S\times p_X)$ and $t_1 = h\circ e_X\circ \pi_1$ factors through $(p_Y,e_Y)$.
\[ \xymatrix{
\dom(\cdot) \ar[r]_{\pi_1} \ar@/^/[rr]^\cdot & C & C \\
S\times R_X \ar[u]^{\id_R\times p_X}\ar[r]_{\pi_1}\ar@{.>}@/^/[rr]^(.4)t & R_X\ar[u]^{p_X} \ar[d]_{e_X} & R_Y\ar[d]^{e_Y}\ar[u]_{p_Y}\\
& GX \ar[r]_h & GY
}\]

Because the inclusion of the domain of the application operator is prone, $(s\cdot r) \mapsto s\cdot p(r)$ defines a map $C\cap FUS \times R_X\to C$. Because $p_Y:R_Y\to C$ is prone too, there is a unique extension of $t$ to $C\cap FUS\times R_X$. The object $US$ belongs to $\phi$ because $S\subseteq C\cap FUS$ and $S$ is inhabited.

The transpose $g^t:UG\to \uo$ is an isomorphism. Hence there is a unique $h':\uo X\to \uo Y$ commuting with $Uh: UGX\to UGY$. This $h'$ is $\phi$-total since $US\in \phi$. Finally, $Gh' = h$ by definition of $G$.
\end{proof}

By generalization the content of this subsection summarizes as the following theorem.

\begin{theorem}[Characterization] Let $\cat B$ and $\cat C$ be Heyting categories, let $F:\cat B\to\cat C$ and $U:\cat C \to \cat B$ be two functors and let $C$ be some object of $\cat C$. If these satisfy axioms of realizability categories, then $\cat C$ is equivalent to $\Asm(\cat B,(A,\cdot),\phi)$ for some partial applicative structure $(A,\cdot)$ and some combinatory complete filter $\phi$.\label{charac}
\end{theorem}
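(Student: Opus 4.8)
The plan is to carry out the program laid out in this subsection and to check that its pieces fit together. First I fix the data of the candidate category of assemblies: put $A = UC$, let $\cdot \colon A \times A \partar A$ be $U$ applied to the partial operator on $C$ furnished by axiom \ref{tracking}, and let $\phi$ consist of those $S \in \sub(A)$ for which $FS$ meets $C$ (regarding $C$ as a subobject of $FA$ via the monic unit $\eta_C$ of axiom \ref{separ}). Before proceeding one must confirm that $(A,\cdot)$ is a partial applicative structure, which is immediate because $\cdot$ on $C$ has prone domain and $U$ is a functor preserving the relevant data, and that $\phi$ is a combinatory complete filter. Upward closure and inhabitedness of members of $\phi$ are formal; closure under application and combinatory completeness --- that is, trackers for all projections and for pointwise applications --- are instances of axiom \ref{tracking} applied to the structure maps out of powers of $C$, using regularity of $F$. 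Theorem \ref{Heyting} then certifies that $\Asm(\cat B, (A,\cdot), \phi)$ is a Heyting category, and the goal is to produce an equivalence $G$ with $\cat C$.

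To build $G$, I would use lemma \ref{reglog}: since $F$ is regular it induces $F \colon \sub(-) \to \sub(F-)$ compatible with inverse and direct images. With it, for each assembly $X$ set $\Xi(X) = \subob{x \of F\uo X \mid \exists c \of C.\ (c,x) \of F_{A \times \uo X}(\rr X)}$ and let $GX = F\uo X_{\Xi(X)}$ with its monic $g_X \colon GX \to F\uo X$, as in definition \ref{ksi}. Pushing a tracker $R \in \phi$ of a morphism $f \colon X \to Y$ through $F$ shows that $Ff$ carries $\Xi(X)$ into $\Xi(Y)$ --- here one uses that $C$ is closed under application --- so there is a unique $Gf \colon GX \to GY$ with $Ff \circ g_X = g_Y \circ Gf$ (lemma \ref{implicit functor}), and functoriality follows by uniqueness. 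Evaluating $\Xi$ on $\nabla X$ and on $\A$ (definition \ref{ringa}) shows the comparison monics become isomorphisms there, giving $G\nabla \simeq F$ and $G\A \simeq C$.

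It then remains to show $G$ is an equivalence, and I would first upgrade axiom \ref{weakgen} to lemma \ref{JMS}: every object of $\cat C$ carries a \emph{jointly monic} prone-regularly-epic span to $C$. This uses that regular epimorphisms of $\cat C$ are automatically prone --- because $FU$ preserves regular epis, $\eta$ is monic, and a commuting square with vertical regular epis and horizontal monos is a pullback, so lemma \ref{prone2} applies --- together with $2$-out-of-$3$ for prones. Essential surjectivity of $G$ then follows by forming, from such a span $p \colon Z \to C$, $e \colon Z \to Y$, the assembly $HY = (UY, U\subob{(a,y) \mid \exists z.\ a = p(z),\ y = e(z)})$: computing $\Xi(HY)$ and using pronality of $p$ via lemma \ref{prone2} exhibits the monic $\eta_Y$ as a member of $\Xi(HY)$, so $GHY \simeq Y$. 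Faithfulness is immediate, since the transpose $g^t \colon UG \to \uo$ is a natural isomorphism --- using that $\epsilon \colon UF \to \id_{\cat B}$ is iso (axiom \ref{separ}) and the defining condition $\cat B \models \forall x.\exists a.\ \rr X$ of an assembly --- and $\uo$ is faithful.

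The hard part is fullness, and it is the only place where the full strength of axiom \ref{tracking} is needed. Given assemblies $X$, $Y$ and an arbitrary $h \colon GX \to GY$ in $\cat C$, the plan is to represent $\rr X$ and $\rr Y$ by jointly monic prone-regularly-epic spans $(p_X, e_X) \colon R_X \to C \times GX$ and $(p_Y, e_Y) \colon R_Y \to C \times GY$, and then invoke axiom \ref{tracking} with $p_X$ prone, $e_Y$ regularly epic, $(p_Y, e_Y)$ monic, and the auxiliary map $h \circ e_X \colon R_X \to GY$. This yields an inhabited $S \subseteq C$ with $s \cdot p_X(x)$ defined and a factorization through $(p_Y, e_Y)$ of $(s,x) \mapsto (s \cdot p_X(x),\ h(e_X(x)))$; since $S$ is inhabited and $S \subseteq C \cap FUS$, we get $US \in \phi$, and the transpose isomorphism $g^t$ converts the factorization into a unique underlying morphism $h' \colon \uo X \to \uo Y$, which $US$ tracks, hence $\phi$-total, and which satisfies $Gh' = h$ by construction. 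The delicate bookkeeping is arranging all the spans to be simultaneously jointly monic, prone, and regularly epic so the hypotheses of axiom \ref{tracking} are literally satisfied, and tracing the representatives carefully enough to see that $US$ really lands in $\phi$; once $G$ is shown full, faithful and essentially surjective, the theorem follows by generalizing over the data.
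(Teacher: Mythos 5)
Your proposal follows the paper's own proof essentially step for step: the same choice of $A=UC$, $\cdot=U(\cdot)$ and $\phi$, the same comparison functor $G$ built from $\Xi$, the same upgrade of weak genericity to jointly monic prone-regularly-epic spans, the same $HY$ construction for essential surjectivity, the transpose $g^t$ for faithfulness, and the tracking axiom for fullness. The only divergence is that you explicitly flag the verification that $\phi$ is a combinatory complete filter, which the paper leaves implicit; that is a point in your favour rather than a gap.
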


The axioms of realizability categories indeed determine if an embedding of a Heyting categories $F:\cat B\to \cat C$ is equivalent to $\nabla:\cat B \to \Asm(\cat B, (A,\cdot),\phi)$ for some partial applicative structure $(A,\cdot)$ and some combinatory complete external filter $\phi$. In that sense the axiomatization is complete.

\subsection{Slices} As conclusion to this paper, I show one application of the axioms of realizability: the proof that slices of categories of assemblies are realizability categories.

\begin{defin} For each assembly $I$ in $\Asm(\cat B, (A,\cdot), \phi)$ let $\cat B/I$ be the category of prone maps into $I$. Let $A/I$ be the projection $\nabla A\times I\to I$ and let $(A/I,\cdot)$ the constant partial applicative structure. Finally, for each $U\subseteq \nabla A\times I$ let $\pi_1:U\to I$ be in $\phi/I$ if the projection is a regular epimorphism.
\end{defin}

\begin{lemma} The categories $\Asm(\cat B, (A,\cdot), \phi)/I$ and $\Asm(\cat B/I, (A/I,\cdot), \phi/I)$ are equivalent. \end{lemma}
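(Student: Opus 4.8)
\emph{Strategy.} The plan is to recognise the slice $\cat C:=\Asm(\cat B,(A,\cdot),\phi)/I$ as a realizability category over the base $\cat B/I$ and to read off the claim from the characterization theorem \ref{charac}. For the set-up, $\cat B':=\cat B/I$ (the category of prone maps into $I$) is, via $\uo$, equivalent to the ordinary slice $\cat B/\uo I$ and hence is a Heyting category, and $\cat C$ is Heyting by theorem \ref{Heyting} together with the fact that slices of Heyting categories are Heyting. Let $F':\cat B'\to\cat C$ be the inclusion of prone maps into $I$, let $U':\cat C\to\cat B'$ send $h:X\to I$ to the prone reflection $(\uo h)^*(I)\to I$ (notation of definition \ref{hulpfuncties}) with reflection map $\eta'_X:X\to(\uo h)^*(I)$ whose underlying morphism is $\id_{\uo X}$, and take as generic object $C':=\A\times I\to I$, the projection to $I$ of the product $\A\times I$ in $\Asm(\cat B,(A,\cdot),\phi)$, where $\A$ is the diagonal assembly of definition \ref{ringa}. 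Since $\A\times I\to I$ is prone, $U'C'$ is isomorphic to the projection $\nabla A\times I\to I$, i.e.\ to the object called $A/I$. A useful preliminary observation, obtained from lemma \ref{prone2} by factoring the $\eta$-square of a morphism $f$ through its $\eta'$-square (using $\uo\dashv\nabla$ and that $\nabla A\times I\to I$ is prone), is that a morphism of $\cat C$ is prone over $U'$ exactly when it is prone over $\uo$ in $\Asm(\cat B,(A,\cdot),\phi)$.

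\emph{The axioms.} For axiom \ref{separ} (separability): the prone reflection exhibits $U'$ as left adjoint to $F'$; the co-unit is an isomorphism because a prone map $p$ satisfies $(\uo p)^*(I)\cong\dom p$; the unit $\eta'$ is a natural monomorphism because each $\eta'_X$ has underlying morphism $\id_{\uo X}$; and both functors are regular, since up to the equivalence $\cat B'\simeq\cat B/\uo I$ they are the inclusion of prone maps (which preserves finite limits, prones being pullback-stable, and regular epimorphisms, a prone morphism between prone maps with regularly epic underlying morphism being regularly epic by the argument of theorem \ref{basis}) and the slice of the regular functor $\uo$ (theorem \ref{basis}). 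For axiom \ref{weakgen} (weak genericity): given $h:X\to I$, choose a monomorphism $(\pi_A,\pi_X):Y_0\to A\times\uo X$ representing $\rr X$, put $m:=(\pi_A,\uo h\circ\pi_X):Y_0\to A\times\uo I$ and $Y:=m^*(\A\times I)$, and take $p:Y\to\A\times I$ the resulting prone morphism and $e:=\pi_X:Y\to X$; that $e$ is $\phi$-total and regularly epic follows from the $\phi$-totality of $h$ and lemma \ref{computableterms}, essentially rerunning the weak genericity theorem for $\Asm(\cat B,(A,\cdot),\phi)$ with a twist along $h$. For axiom \ref{tracking} (tracking): take for $\cdot'$ the operator $\triangledown\times\id_I:C'\times_I C'\partar C'$, where $\triangledown:\A\times\A\partar\A$ comes from the tracking theorem for $\Asm(\cat B,(A,\cdot),\phi)$; its domain inclusion is prone over $\uo$, hence over $U'$, and the tracking property transports from that theorem once all objects are kept fibred over $I$.

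\emph{Conclusion.} Theorem \ref{charac} now gives an equivalence $\cat C\simeq\Asm(\cat B',(A',\cdot'),\phi'')$ with $A'=U'C'\cong A/I$, with $\cdot'$ the image under $U'$ of the tracking operator (which is the constant application on $A/I$), and with $\phi''=\set{S\subseteq A'\mid F'S\text{ intersects }C'}$. It then remains to identify $\phi''$ with $\phi/I$: for a subobject $S$ of $A/I=\nabla A\times I\to I$ one unwinds the statement ``$F'S$ intersects $\A\times I$'' in $\cat C$ — chasing $S$ through the adjunction $U'\dashv F'$ and the monic unit $\eta'_{C'}$ — to the statement that the projection $S\to I$ is a regular epimorphism, which is exactly the defining condition of $\phi/I$. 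Since $\Asm$ is unchanged up to equivalence under an isomorphism of the triple (object, application, filter), this rewrites the equivalence as $\Asm(\cat B,(A,\cdot),\phi)/I\simeq\Asm(\cat B/I,(A/I,\cdot),\phi/I)$, as required.

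\emph{Main obstacle.} I expect the delicate points to be the verification of the tracking axiom in the slice — correctly relating pronenessover $U'$ to pronenessover $\uo$ and rerunning the diagram chases of the tracking theorem with every object fibred over $I$ — and the final identification $\phi''=\phi/I$, where one must carefully trace the meaning of ``$F'S$ intersects $C'$'' through the adjunction $U'\dashv F'$ and the non-identity unit $\eta'$, in particular using that $C'$ is $\A\times I$ rather than its prone reflection $\nabla A\times I$.
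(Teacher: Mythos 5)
Your proposal is correct and follows essentially the same route as the paper: exhibit the slice as a realizability category by verifying separability (via the prone reflection, the paper's reflector $T=F'U'$), weak genericity (twisting the span for $X$ into a prone map to $\A\times I$), and tracking (transporting $\triangledown$ to the constant object $C\times I$), then invoke theorem \ref{charac}. Your explicit unwinding of the filter $\phi''=\set{S\mid F'S\text{ intersects }C'}$ into the condition defining $\phi/I$ is a step the paper leaves implicit, but it is the same argument.
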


\begin{proof} The category $\cat B/I$ is a reflective subcategory of $\Asm(\cat B, (A,\cdot), \phi)/I$. The reflector sends $x:X\to I$ to the pullback $Tx$ of $\nabla\uo x$ along $\eta_I: I\to \nabla\uo I$ and since both reindexing along $\eta_I$ and $\nabla \uo$ are regular functors, so is $T$. The unit $\theta_x$ of the reflector is the factorization of $\eta_X:X \to \nabla\uo X$ through the projection $TX = \nabla\uo X\times_{\nabla\uo I} I\to \nabla\uo X$, which is a monomorphism, because it is the pullback of the monomorphism $\eta_I$ and equal to $\eta_{TX}$. Hence $\theta_x$ is a monomorphism and the reflector $T$ is faithful.
\[\xymatrix{
X\ar@/^2ex/[rr]^{\eta_X}\ar[r]_{\theta_x}\ar[d]_x & TX \ar[r]_{\eta_{TX}} \ar[d]_{Tx}\ar@{}[dr]|<\lrcorner & FUX \ar[d]^{FUx} \\
I\ar@{=}[r] & I \ar[r]_{\eta_I} & FUI
}\]
So the slice category satisfies separability.

Concerning weak genericity. For each $x:X\to I$ there is an object $Y$, a prone morphism $p: Y\to \A$ and a regular epimorphism $e:Y\to \A$ by the weak genericity axiom. The morphism $(p,e\circ x): Y\to \A\times I$ is prone relative to the reflector $T:\Asm(\cat B, (A,\cdot), \phi)/I \to \cat B/I$ for the following reasons.
Let $g = (g_0, g_1): Z\to \A\times I$ satisfy $Tg = T(p, e\circ x)\circ h$ for some $h: TZ \to TY$. 
For each $x:X\to I$ the morphism $\nabla \uo \theta_x$ is an isomorphism by separability in $\Asm(\cat B,(A,\cdot),\phi)$. So let
\[ h' = (\nabla\uo\theta_{x\circ e})^{-1}\circ \nabla\uo h \circ \nabla\uo\theta_{g_1} \]
By naturalness of $\theta$,  $\nabla\uo g_0 = \nabla\uo  p\circ h'$. Since $p$ is prone relative to $\nabla\uo$ there is a unique $k: Z \to Y$ such that $\nabla \uo k=h'$ and $p\circ k = g_0$. Because $\nabla\uo\theta$ is a natural isomorphism, $\nabla\uo Tk = \nabla\uo h$ and $Tk = h$ by faithfulness of $\nabla\uo$.
\[ \xymatrix{
\nabla\uo Z \ar[rr]^{h'} \ar[d]_{\nabla\uo\theta_{g_0}} && \nabla\uo Y\ar[d]^{\nabla \uo\theta_{x\circ e}} \ar[rr]^{\nabla\uo(p,x\circ e)} && \nabla \uo( C\times I ) \ar[d]^{\nabla\uo\theta_{\pi_1}}\\
\nabla\uo TZ \ar[rr]_{\nabla\uo Tk=\nabla\uo h} && \nabla\uo TZ \ar[rr]_{\nabla\uo T(p,x\circ e)} && \nabla\uo( T(C\times I) )
}\]
By assumption $Tg = T(p, e\circ x)\circ h$ and $Tg_1 = T(e\circ x)\circ h = T(e\circ x\circ k)$ because $T$ preserves limits. The functor $T$ is faithful, so $g_1 = e\circ x\circ k$. Hence $g = (p, x\circ e)\circ k$. By generalization $(p,x\circ e)$ is prone relative to $T$ meaning there is a prone-regularly-epic span for each object in $\Asm(\cat B, (A,\cdot), \phi)/I$.

The stability of the tracking axiom follows from the stability of the kinds of arrows involved. A morphism $p: P \to C\times I$ is prone in $\Asm(\cat B,(A,\cdot),\phi)/I$ if it is the pullback of $Tp: TP \to FUC\times I$ along $\eta_C\times I: C\times I \to FUC\times I$ by lemma \ref{prone2}. Because the projection $\pi_0:C\times I \to C$ is the pullback of the projection $\pi_0:FUC\times I \to FUC$ along $\eta_C: C\to FUC$, the composition $\pi_0\circ p: TP \to C$ is prone in $\Asm(\cat B,(A,\cdot),\phi)$.
\[ \xymatrix{
P \ar[r]^p\ar[d]_{\theta_P} \ar@{}[dr]|<\lrcorner & C\times I \ar[r]^{\pi_0}\ar[d]^{\eta_C\times I} \ar@{}[dr]|<\lrcorner & C\ar[d]^{\eta_C} \\
TP \ar[r]_{Tp} & FUC\times I\ar[r]_{\pi_0} & FUC
}\]
Let $f = (f_0,f_1): Y\to (C\times I)\times_I X$ be a monomorphism such that $f_1: Y\to X$ is a regular epimorphism. Of course $(C\times I)\times_I X \simeq C\times X$, as $C\times I$ stands for a constant object of the slice. Any morphism $P\to X$ factors though $f_1$ because of the tracking axiom in $\Asm(\cat B,(A,\cdot),\phi)$ and this factorization works in the slice $\Asm(\cat B,(A,\cdot),\phi)/I$ because $C\times I$ is a constant object.

The conclusion is that $\Asm(\cat B,(A,\cdot),\phi)/I$ is a realizability category and that it is isomorphic to $\Asm(\cat B/I, (A/I,\cdot), \phi/I)$ in particular.
\end{proof}

The conclusion of this paper generalizes the lemma proved above.

\begin{theorem} Slices of categories of assemblies are categories of assemblies. \end{theorem}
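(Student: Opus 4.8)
The plan is to derive the theorem from the preceding lemma together with a small amount of bookkeeping. Every object of a category of assemblies is an assembly, so a slice of $\Asm(\cat B,(A,\cdot),\phi)$ is always of the form $\Asm(\cat B,(A,\cdot),\phi)/I$ for some assembly $I$, and the lemma above already identifies this slice with $\Asm(\cat B/I,(A/I,\cdot),\phi/I)$. What remains is to confirm that the right hand side really is a category of assemblies in the sense of definition \ref{Asmbaphi}, i.e.\ that $\cat B/I$ is a Heyting category, that $(A/I,\cdot)$ is a partial combinatory algebra in it, and that $\phi/I$ is a combinatory complete filter.

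First I would check the Heyting structure. By lemma \ref{prone2} the category $\cat B/I$ of prone maps into $I$ is equivalent to an ordinary slice of $\cat B$, and ordinary slices of Heyting categories are Heyting categories: subobject lattices, inverse image maps, and both adjoints are inherited from $\cat B$, and the Beck--Chevalley condition in the slice is an instance of the one in $\cat B$. Next, $(A/I,\cdot)$ is the constant applicative structure obtained by pulling $\cdot$ back along the projection $\nabla A\times I\to I$; it is a partial combinatory algebra because the combinators $\comb k$ and $\comb s$ for $(A,\cdot)$ pull back to combinators for $(A/I,\cdot)$. That $\phi/I$ is a filter is immediate from the definitions, since inhabitedness (``being a regular epimorphism'') and the closure conditions are stable under reindexing.

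The step I expect to be the main obstacle is combinatory completeness of $\phi/I$: one must show that the $\phi/I$-computable partial morphisms $(A/I)^n\partar A/I$ contain all projections and are closed under pointwise application, where computability is now read in the internal language of $\cat B/I$. The key observation is that a family over $I$ is $\phi/I$-computable precisely when it is $\phi$-computable fibrewise, which follows from the regularity of reindexing along the projection and from the Beck--Chevalley condition relating computability in $\cat B$ to computability in $\cat B/I$; combinatory completeness then transfers from $(A,\cdot)$ over $\cat B$. Alternatively, and more cleanly, one can avoid this bookkeeping and appeal directly to the characterization theorem \ref{charac}: the proof of the preceding lemma already verifies that $\Asm(\cat B,(A,\cdot),\phi)/I$, equipped with the reflector $T$, its right adjoint, and the constant object $\A\times I$, satisfies axioms \ref{separ}, \ref{weakgen}, and \ref{tracking}, so the slice is equivalent to a category of assemblies on those grounds alone. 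Finally, since a slice of a slice is again a slice, this shows that the class of categories of assemblies is closed under slicing.
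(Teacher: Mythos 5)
Your proposal is correct and follows essentially the same route as the paper: the theorem is obtained as an immediate consequence of the preceding lemma identifying $\Asm(\cat B,(A,\cdot),\phi)/I$ with $\Asm(\cat B/I,(A/I,\cdot),\phi/I)$, and your ``cleaner'' alternative---verifying axioms \ref{separ}, \ref{weakgen} and \ref{tracking} for the slice and invoking theorem \ref{charac}---is exactly how that lemma is proved in the paper. Your additional checks that $\cat B/I$ is Heyting, that $(A/I,\cdot)$ is a partial combinatory algebra, and that $\phi/I$ is a combinatory complete filter are sound and make explicit some well-formedness bookkeeping the paper leaves implicit.
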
 

\subsection*{Acknowledgments} 
I am grateful to the Warsaw Center of Mathematics and Computer Science for the opportunity to write this paper. The contents of this paper is mostly based in research during my Ph.D. candidacy at the Mathematical Institute of Utrecht University.

\bibliographystyle{plain}
\bibliography{realizability}{}

\begin{thebibliography}{10}

\bibitem{MR622912}
Hendrik~Pieter Barendregt.
\newblock {\em The lambda calculus}, volume 103 of {\em Studies in Logic and
  the Foundations of Mathematics}.
\newblock North-Holland Publishing Co., Amsterdam, 1981.
\newblock Its syntax and semantics.

\bibitem{MR1358759}
Aurelio Carboni.
\newblock Some free constructions in realizability and proof theory.
\newblock {\em J. Pure Appl. Algebra}, 103(2):117--148, 1995.

\bibitem{MR1600009}
Aurelio Carboni and E.~M. Vitale.
\newblock Regular and exact completions.
\newblock {\em J. Pure Appl. Algebra}, 125(1-3):79--116, 1998.

\bibitem{MR0258596}
A.~G. Dragalin.
\newblock Transfinite complements of the constructive arithmetical calculus.
\newblock {\em Dokl. Akad. Nauk SSSR}, 189:458--460, 1969.

\bibitem{afsort}
Jonas Frey.
\newblock {\em A Fibrational Study of Realizability Toposes}.
\newblock PhD thesis, Universit\'e Paris Diderot, Paris 7, 2013.

\bibitem{MR2265872}
Pieter J.~W. Hofstra.
\newblock All realizability is relative.
\newblock {\em Math. Proc. Cambridge Philos. Soc.}, 141(2):239--264, 2006.

\bibitem{MR1981211}
Pieter J.~W. Hofstra and Jaap van Oosten.
\newblock Ordered partial combinatory algebras.
\newblock {\em Math. Proc. Cambridge Philos. Soc.}, 134(3):445--463, 2003.

\bibitem{MR717245}
J.~M.~E. Hyland.
\newblock The effective topos.
\newblock In {\em The {L}.{E}.{J}. {B}rouwer {C}entenary {S}ymposium
  ({N}oordwijkerhout, 1981)}, volume 110 of {\em Stud. Logic Foundations
  Math.}, pages 165--216. North-Holland, Amsterdam, 1982.

\bibitem{MR1233148}
J.~M.~E. Hyland and C.-H.~L. Ong.
\newblock Modified realizability toposes and strong normalization proofs
  (extended abstract).
\newblock In {\em Typed lambda calculi and applications ({U}trecht, 1993)},
  volume 664 of {\em Lecture Notes in Comput. Sci.}, pages 179--194. Springer,
  Berlin, 1993.

\bibitem{MR0106838}
Georg Kreisel.
\newblock Interpretation of analysis by means of constructive functionals of
  finite types.
\newblock In {\em Constructivity in mathematics: {P}roceedings of the
  colloquium held at {A}msterdam, 1957 (edited by {A}. {H}eyting)}, Studies in
  Logic and the Foundations of Mathematics, pages 101--128. North-Holland
  Publishing Co., Amsterdam, 1959.

\bibitem{Longley_matchingtyped}
John~R. Longley.
\newblock Matching typed and untyped realizability (extended abstract).

\bibitem{RTnLS}
John~R. Longley.
\newblock {\em Realizability Toposes and Language Semantics}.
\newblock PhD thesis, University of Edinburgh, 1994.

\bibitem{Menni00exactcompletions}
Mat{\'{\i}}as Menni.
\newblock {\em Exact Completions and Toposes}.
\newblock PhD thesis, University of Edinburgh, 2000.

\bibitem{MR1900904}
Mat{\'{\i}}as Menni.
\newblock More exact completions that are toposes.
\newblock {\em Ann. Pure Appl. Logic}, 116(1-3):187--203, 2002.

\bibitem{MR1056382}
Edmund Robinson and Giuseppe Rosolini.
\newblock Colimit completions and the effective topos.
\newblock {\em J. Symbolic Logic}, 55(2):678--699, 1990.

\bibitem{MR1512218}
M.~Sch{\"o}nfinkel.
\newblock \"{U}ber die {B}austeine der mathematischen {L}ogik.
\newblock {\em Math. Ann.}, 92(3-4):305--316, 1924.

\bibitem{RealCats}
Wouter~Pieter Stekelenburg.
\newblock {\em Realizability Categories}.
\newblock PhD thesis, Utrecht University, 2013.

\bibitem{MSC:8896618}
Wouter~Pieter Stekelenburg.
\newblock Regular functors and relative realisability categories.
\newblock {\em Mathematical Structures in Computer Science}, FirstView:1--29, 5
  2013.

\bibitem{MR0335240}
A.~S. Troelstra.
\newblock Notions of realizability for intuitionistic arithmetic and
  intuitionistic arithmetic in all finite types.
\newblock In {\em Proceedings of the {S}econd {S}candinavian {L}ogic
  {S}ymposium ({O}slo, 1970)}, pages 369--405. Studies in Logic and the
  Foundations of Math., Vol. 63, Amsterdam, 1971. North-Holland.

\bibitem{MR1640330}
A.~S. Troelstra.
\newblock Realizability.
\newblock In {\em Handbook of proof theory}, volume 137 of {\em Stud. Logic
  Found. Math.}, pages 407--473. North-Holland, Amsterdam, 1998.

\bibitem{MR1443487}
Jaap van Oosten.
\newblock Extensional realizability.
\newblock {\em Ann. Pure Appl. Logic}, 84(3):317--349, 1997.

\bibitem{MR2479466}
Jaap van Oosten.
\newblock {\em Realizability: an introduction to its categorical side}, volume
  152 of {\em Studies in Logic and the Foundations of Mathematics}.
\newblock Elsevier B. V., Amsterdam, 2008.

\bibitem{MR2223406}
Angelo Vistoli.
\newblock Grothendieck topologies, fibered categories and descent theory.
\newblock In {\em Fundamental algebraic geometry}, volume 123 of {\em Math.
  Surveys Monogr.}, pages 1--104. Amer. Math. Soc., Providence, RI, 2005.

\end{thebibliography}


\end{document}